\title[Dual Hoffman Bounds Based on SDP]{%
  Dual Hoffman Bounds for the Stability\\
  and Chromatic Numbers Based on SDP}
\author[Nathan Benedetto Proença]{Nathan Benedetto Proença\textsuperscript{1\P}}
\email{nathan@ime.usp.br}
\thanks{%
  \textsuperscript{\P}%
  This work was partially supported by Conselho Nacional de
  Desenvolvimento Científico e Tecnológico (CNPq).
  The author acknowledges CAPES (PROEX) for partial support of this
  work.}
\author[Marcel K. de Carli Silva]{Marcel K. de Carli Silva\textsuperscript{1*}}
\address{%
  \textsuperscript{1}Universidade de São Paulo, Instituto de Matemática e Estatística%
}
\email{mksilva@ime.usp.br}
\thanks{%
  \textsuperscript{*}%
  This work was partially supported by CNPq
  (Proc.~423833/2018-9, 456792/2014-7, and~477203/2012-4), by grant
  \#2013/03447-6, São Paulo Research Foundation (FAPESP), and by CAPES
  (PROEX)}
\author[Gabriel Coutinho]{Gabriel Coutinho\textsuperscript{2}}
\address{%
  \textsuperscript{2}Dep.\ of Computer Science, Federal University of Minas Gerais%
}
\email{gabriel@dcc.ufmg.br}
\date{April 10, 2020}
\begin{document}

\begin{abstract}
  The notion of duality is a key element in understanding the
  interplay between the stability and chromatic numbers of a graph.
  This notion is a central aspect in the celebrated theory of perfect
  graphs, and is further and deeply developed in the context of the
  Lovász theta function and its equivalent characterizations and
  variants.
  The main achievement of this paper is the introduction of a new
  family of norms, providing upper bounds for the stability number,
  that are obtained from duality from the norms motivated by Hoffman's
  lower bound for the chromatic number and which achieve the
  (complementary) Lovász theta function at their optimum.
  As a consequence, our norms make it formal that Hoffman's bound for
  the chromatic number and the Delsarte-Hoffman ratio bound for the
  stability number are indeed dual.
  Further, we show that our new bounds strengthen the convex quadratic
  bounds for the stability number studied by Luz and Schrijver, and
  which achieve the Lovász theta function at their optimum.
  One of the key observations regarding weighted versions of these
  bounds is that, for any upper bound for the stability number of a
  graph which is a positive definite monotone gauge function, its
  gauge dual is a lower bound on the fractional chromatic number, and
  conversely.
  Our presentation is elementary and accessible to a wide audience.
\end{abstract}

\maketitle

\section{Introduction}
\label{sec:intro}

Let \(G = (V,E)\) be a graph.
A subset \(S\) of~\(V\) is \emph{stable} if no edge of~\(G\) joins two
vertices of~\(S\).
The \emph{stability number} of~\(G\), denoted by \(\alpha(G)\), is the
maximum size of a stable set in~\(G\).
A \emph{coloring} of~\(G\) is a partition of~\(V\) into stable sets.
The \emph{chromatic number} of~\(G\), denoted by \(\chi(G)\), is the
minimum size of a coloring of~\(G\).
While these classical graph parameters are well known to be NP-hard to
compute, there are several upper bounds for~\(\alpha(G)\) and lower
bounds for~\(\chi(G)\) that work well for important families of
graphs or have other favorable properties.
Many such bounds are spectral, that~is, they arise from the
eigenvalues of matrices associated with the graph~\(G\), such as its
adjacency matrix~\(A_G\).
Recent work on such bounds includes~\cite{Bilu06a, GodsilN08a,
  ElphickW17a}.

Hoffman~\cite{Hoffman70a} proved some of the oldest, most classical
bounds for~\(\alpha\) and~\(\chi\).
One is the so-called \emph{Delsarte-Hoffman ratio bound},
\begin{equation}
  \label{eq:ratio-bound}
  \alpha(G)
  \leq
  \frac{n}{1-{k}/{\tau}},
\end{equation}
which holds for any \(k\)-regular graph \(G\), where \(k \geq 1\) and
\(\tau \coloneqq \lambda_{\min}(A_G)\) is the smallest eigenvalue
of~\(A_G\).
Throughout we write \(n\) for the number of vertices of the (current)
graph.
The other \emph{Hoffman bound} is
\begin{equation}
  \label{eq:Hoffman-bound}
  \chi(G) \geq 1-\frac{\lambda_{\max}(A_G)}{\lambda_{\min}(A_G)},
\end{equation}
which holds for any graph~\(G\) with at least one edge, where
\(\lambda_{\max}\) extracts the largest eigenvalue.
We refer the reader to~\cite[Ch.~3]{Newman04a} for a discussion of
these bounds, including origins and generalizations.

It is interesting to note that many other similar bounds also come
naturally in pairs.
As an example, consider the celebrated graph parameter \(\theta(G)\),
known as the \emph{Lovász theta number} of~\(G\).
This graph parameter was introduced in seminal work of
Lovász~\cite{Lovasz79a} and it can be efficiently computed (to within
any desired precision) by solving a semidefinite program (SDP); we
postpone its exact definition and further references for later.
It~provides both an upper bound for~\(\alpha(G)\) and a lower bound
for~\(\chi(G)\) since
\(\alpha(G) \leq \theta(G) \leq \overline{\chi}(G)\).
Here we are adopting the usual convention of denoting, for every graph
parameter~\(\beta\), the complementary graph
parameter~\(\overline{\beta}\) defined as
\(\overline{\beta}(G) \coloneqq \beta(\overline{G})\), where
\(\overline{G}\) denotes the \emph{complementary graph} of
\(G = (V,E)\), that~is, the graph on~\(V\) whose edges are the
non-edges of~\(G\).
Moreover, \(\theta(G)\overline{\theta}(G) \geq n\), with equality
whenever \(G\) is vertex-transitive.
These are manifestations of the fact that \(\theta\) and
\(\overline{\theta}\) are dual to each~other, in some precise sense.

As another example, consider two variants of the Lovász theta number,
usually denoted by~\(\theta'(G)\) and~\(\theta^+(G)\), introduced
respectively by McEliece, Rodemich, and Rumsey~\cite{McElieceRR78a}
and Schrijver~\cite{Schrijver79a}, and by Szegedy~\cite{Szegedy94a}.
These parameters are obtained from~\(\theta(G)\) by adding/relaxing
constraints from the SDP formulation for~\(\theta(G)\) and they
satisfy
\( \alpha(G) \leq \theta'(G) \leq \theta(G) \leq \theta^+(G) \leq
\overline{\chi}(G) \).
Hence, \(\theta'(G)\) provides an upper bound for~\(\alpha(G)\) and
\(\overline{\theta^+}(G)\) provides a lower bound for~\(\chi(G)\).
Moreover, \(\theta'(G)\overline{\theta^+}(G) \geq n\), and equality
holds if \(G\)~is~vertex-transitive.
As~before, these arise since \(\theta'\) and \(\overline{\theta^+}\)
are dual to each other.

As a final, slightly contrived though crucial example, consider the
trivial upper bound on~\(\alpha(G)\) given by~\(\alpha(G)\) itself,
and the lower
bound \(\chi_f(G)\), known as the \emph{fractional chromatic number},
for~\(\chi(G)\).
It can be defined using a linear program (LP) as follows:
\begin{equation*}
  \chi_f(G)
  \coloneqq
  \min\setst[\bigg]{
    \sum_{S} y_S
  }{
    y \in \Reals_+^{\Scal(G)},\,
    \sum_{S} y_S \incidvector{S} \geq \incidvector{V}
  };
\end{equation*}
both summations range over the set \(\Scal(G)\) of stable sets
of~\(G\), and \(\incidvector{S} \in \set{0,1}^V\) denotes the
incidence vector of \(S \subseteq V\).
We have
\begin{equation}
  \label{eq:sandwich-long}
  \alpha(G)
  \leq
  \theta'(G)
  \leq
  \theta(G)
  \leq
  \theta^+(G)
  \leq
  \overline{\chi_f}(G)
  \leq
  \overline{\chi}(G).
\end{equation}
Once more, \(\alpha(G)\chi_f(G) \geq n\), and equality holds if
\(G\)~is~vertex-transitive.
Again, these are manifestations of \(\alpha\) and~\(\chi_f\) being
dual to each other.
There is a precise, \emph{geometric} notion in which all these pairs
of parameters are dual pairs.

With this context in mind, the ratio bound~\cref{eq:ratio-bound} and
the Hoffman bound~\cref{eq:Hoffman-bound} look suspiciously like a
dual pair.
Note that their product is~\(n\) whenever both bounds apply, which
includes the case where \(G\) is vertex-transitive.
In this paper, we introduce a graph parameter~\(\Hdual\), dual to the
Hoffman bound~\cref{eq:Hoffman-bound}, which:
\begin{enumerate}[(i)]
\item is defined as the optimal value of an SDP;
\item yields the Delsarte-Hoffman ratio bound~\cref{eq:ratio-bound}
  when applied to regular graphs;
\item comes from a family of upper bounds~\(\Hdual_A\)
  for~\(\alpha(G)\) indexed by any generalized adjacency matrix~\(A\)
  of~\(G\), and the best bound in the family coincides
  with~\(\theta(G)\);
\item coincides with a convex quadratic upper bound~\(\Luz(G)\)
  for~\(\alpha(G)\) introduced by Luz~\cite{Luz95a}, and similarly for
  the generalized bounds~\(\Hdual_A\) when the generalized adjacency
  matrix~\(A\) is nonnegative;
\item provides an upper bound on~\(\alpha(G)\), via the dual SDP, that
  depends on the minimum component of the (normalized) Perron
  eigenvector, when \(G\) is connected.
\end{enumerate}
In particular, the new parameter~\(\Hdual\) we introduce, along with
its properties, proves that the
bounds~\cref{eq:ratio-bound,eq:Hoffman-bound} form indeed a dual pair,
according to the precise notion that we shall formalize.

We rely on the remark that any (weighted) upper bound on the stability
number~\(\alpha\) that satisfies some natural properties, which we
call a positive definite monotone gauge, yields via gauge duality a
(weighted) lower bound on the fractional chromatic number~\(\chi_f\),
and vice versa.
These notions come from convex analysis, however our treatment is
self-contained and elementary.

The rest of this paper is organized as follows.
\Cref{sec:gauge-duality} defines positive definite monotone gauges and
lays out the precise notion of duality which links the above pairs.
We introduce our new parameter~\(\Hdual\) in
\cref{sec:dual-Hoffman-bound}, where we prove some of its basic
properties, including that it is dual to the Hoffman
bound~\cref{eq:Hoffman-bound}.
In~\cref{sec:Luz}, we prove that \(\Hdual\) always provides a bound
on~\(\alpha\) at least as good as the bound~\(\Luz\) introduced by
Luz.
We conclude with \cref{sec:theta}, where we prove that the best upper
bound for~\(\alpha(G)\) arising from the family \(\Hdual\) indexed by
generalized adjacency matrices of~\(G\) matches~\(\theta(G)\), as well
as other relationships involving the variants~\(\theta'(G)\)
and~\(\theta^+(G)\).

\section{Duality of Bounds for the Stability and Chromatic Numbers}
\label{sec:gauge-duality}

\subsection{Duality of Norms, Sign-Invariant Norms, and Positive
  Definite Monotone Gauges}

In this section, we present the relevant concepts from the theory of
gauge duality in an accessible form; we~refer the reader to~\cite[§14
and §15]{Rockafellar97a} for a complete treatment.
(Gauge duality has received a lot of attention in the optimization
community recently; see \cite{FriedlanderMP14a,AravkinBDFM18a}.)
We will need to define weighted versions of the stability
number~\(\alpha\), the fractional chromatic number~\(\chi_f\), and
other parameters.
These weighted parameters correspond to linear optimization over
certain convex sets, known as convex corners, which can be thought of
as wedges cut off from unit balls of certain norms.
We will relate convex corners via antiblocking duality, a concept
which in the polyhedral case goes back at least to
Fulkerson~\cite{Fulkerson71a,Fulkerson72a}; see
also~\cite[Sec.~9.3]{Schrijver86a}.
Our development, which grounds the remainder of the text, treads only
on widespread concepts such as norms and their duals, at the cost of
not being the most direct route to the desired results.

Let \(G = (V,E)\) be a graph.
Let \(w \in \Reals_+^V\) be a nonnegative weight function.
Recall that \(\Scal(G)\) denotes the set of stable sets of~\(G\).
The \emph{weighted stability number} of~\(G\) and the \emph{weighted
  fractional chromatic number} of~\(G\) are, respectively,
\begin{gather}
  \alpha(G,w) \coloneqq \max\setst[\big]{
    \iprodt{w}{\incidvector{S}}
  }{
    S \in \Scal(G)
  },
  \notag
  \\
  \label{eq:def-chif}
  \chi_f(G,w) \coloneqq \min\setst[\bigg]{
    \iprodt{\ones}{y}
  }{
    y \in \Reals_+^{\Scal(G)},\,
    \sum_{\mathclap{S \in \Scal(G)}} y_S \incidvector{S} \geq w
  }.
\end{gather}
Here, we denote the vector of all-ones by~\(\ones\).
Combinatorially, that~is, when \(w\) is integer-valued,
\(\alpha(G,w)\) and \(\chi_f(G,w)\) are, respectively, the stability
number and the fractional chromatic number of the graph obtained
from~\(G\) by replacing each vertex~\(i\) by a stable set of
size~\(w_i\).
These parameters correspond to the LPs
\begin{gather}
  \alpha(G,w) = \max\setst[\big]{
    \iprodt{w}{x}
  }{
    x \in \STAB(G)
  },
  \notag
  \\
  \label{eq:chif-over-QSTAB}
  \chi_f(G,w) = \max\setst[\big]{
    \iprodt{w}{x}
  }{
    x \in \QSTAB(\overline{G})
  },
\end{gather}
where
\begin{gather*}
  \STAB(G)
  \coloneqq
  \conv\setst[\big]{
    \incidvector{S} \in \Reals^V
  }{
    S \in \Scal(G)
  },
  \\
  \QSTAB(G)
  \coloneqq
  \setst[\big]{
    x \in \Reals_+^V
  }{
    \iprodt{\incidvector{K}}{x}
    \leq 1
    \,
    \forall K \subseteq V
    \text{ clique in \(G\)}
  };
\end{gather*}
here, \(\conv\) denotes the convex hull.  \Cref{eq:chif-over-QSTAB}
follows from LP Strong Duality.

\emph{Throughout the paper, let \(V\) denote an arbitrary finite set.}

Denote the componentwise absolute value of a vector~\(x \in \Reals^V\)
by \(\abs{x}\).
We will see below that
\begin{equation}
  \label{eq:alpha-chif-yield-norms}
  \text{the functions }
  \norm[\alpha,G]{\cdot} \colon x \in \Reals^V \mapsto \alpha(G,\abs{x})
  \text{ and }
  \norm[\chi_f,G]{\cdot} \colon x \in \Reals^V \mapsto \chi_f(G,\abs{x})
  \text{ are norms on }
  \Reals^V.
\end{equation}
Recall that a norm on~\(\Reals^V\) is a function
\(\norm{\cdot} \colon \Reals^V \to \Reals\) such that
\begin{eqenum}[list:def-norm]
\item\label{item:norm-pd} \(\norm{\cdot}\) is \emph{positive
    definite}, i.e., \(\norm{x} \geq 0\) for every \(x \in \Reals^V\),
  with equality if and only if \(x = 0\);
\item\label{item:norm-homog} \(\norm{\cdot}\) is \emph{absolutely
    homogeneous}, i.e., \(\norm{\lambda x} = \abs{\lambda}\norm{x}\)
  for every scalar \(\lambda \in \Reals\) and every
  \(x \in \Reals^V\);
\item\label{item:norm-triangle} \(\norm{\cdot}\) satisfies the
  \emph{triangle inequality}, i.e.,
  \(\norm{x+y} \leq \norm{x} + \norm{y}\) for every
  \(x,y \in \Reals^V\).
\end{eqenum}
Let \(\Ball \coloneqq \setst{x \in \Reals^V}{\norm{x} \leq 1}\) be the
\emph{unit ball} of the norm~\(\norm{\cdot}\).
The \emph{dual norm} of~\(\norm{\cdot}\) is the function
\(\norm{\cdot}^* \colon \Reals^V \to \Reals\) defined as
\begin{equation}
  \label{eq:def-dual-norm}
  \norm{y}^*
  \coloneqq
  \max\setst{
    \iprodt{x}{y}
  }{
    \norm{x} \leq 1
  }
  =
  \max_{x \in \Ball} \iprodt{x}{y},
  \qquad
  \forall y \in \Reals^V.
\end{equation}
Recall that the \emph{polar} of a set \(\Xcal \subseteq \Reals^V\) is
\begin{equation*}
  \Xcal^{\polar}
  \coloneqq
  \setst{
    y \in \Reals^V
  }{
    \iprodt{x}{y} \leq 1,\forall x \in \Xcal
  }.
\end{equation*}
The \emph{Minkowski functional} of~\(\Xcal \subseteq \Reals^V\) is
\begin{equation}
  \label{eq:def-Minkowski-functional}
  \gamma_{\Xcal}(x)
  \coloneqq
  \inf\setst{
    \mu \in \Reals_+
  }{
    x \in \mu\Xcal
  },
  \qquad
  \forall x \in \Reals^V.
\end{equation}
The next result isolates which properties of~\(\Ball\) make it the
unit ball of some norm, and which ensure that \(\norm{\cdot}^*\) is a
norm:
\begin{proposition}[Construction of Norms]
  \label{prop:norm-construction}
  Let \(\Ball \subseteq \Reals^V\) be a compact convex set having
  \(0\) in its interior and such that \(\Ball = -\Ball\).  Then:
  \begin{eqenum}[list:norm-construction]
  \item\label{item:norm-construction-primal} the function
    \(\norm{\cdot} \coloneqq \gamma_{\Ball}(\cdot)\) is a norm
    on~\(\Reals^V\) with unit ball~\(\Ball\);
  \item\label{item:norm-construction-dual} the function
    \(\norm{\cdot}^* \colon y \in \Reals^V \mapsto \max_{x \in \Ball}
    \iprodt{x}{y}\) is a norm on~\(\Reals^V\) with unit
    ball~\(\Ball^{\polar}\).
  \end{eqenum}
\end{proposition}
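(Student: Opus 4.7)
The plan is to verify the three norm axioms for each of the two functions and then identify their unit balls, using exactly one of the hypotheses on~\(\Ball\) for each step.

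For part~\cref{item:norm-construction-primal}, I would first argue that \(\gamma_{\Ball}\) is finite-valued: since \(0\) lies in the interior of~\(\Ball\), some \(\epsilon>0\) satisfies \(\epsilon \Ball_2 \subseteq \Ball\) (writing \(\Ball_2\) for the Euclidean ball), so any \(x\) belongs to \(\mu\Ball\) whenever \(\mu\) is at least \(\norm{x}_2/\epsilon\); hence the infimum in~\cref{eq:def-Minkowski-functional} is over a nonempty set. The three norm axioms then come out as follows. Absolute homogeneity under scaling by a nonzero \(\lambda\) follows from the substitution \(\mu \mapsto \abs{\lambda}\mu\) together with \(\Ball=-\Ball\) to handle the sign; at \(\lambda=0\) it is immediate. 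The triangle inequality is the standard Minkowski-functional computation: if \(x \in \mu\Ball\) and \(y \in \nu\Ball\) with \(\mu,\nu>0\), convexity of~\(\Ball\) yields \(x+y = (\mu+\nu)\bigl(\tfrac{\mu}{\mu+\nu}\tfrac{x}{\mu} + \tfrac{\nu}{\mu+\nu}\tfrac{y}{\nu}\bigr) \in (\mu+\nu)\Ball\). Positive definiteness requires both hypotheses: nonnegativity is trivial from the definition, and if \(\gamma_{\Ball}(x)=0\) then \(x \in \mu\Ball\) for arbitrarily small \(\mu>0\), which by compactness (hence boundedness) of~\(\Ball\) forces \(x=0\). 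Finally, to identify the unit ball, compactness of~\(\Ball\) together with the closedness of the scaled sets \(\mu\Ball\) shows the infimum in~\cref{eq:def-Minkowski-functional} is attained, so \(\gamma_{\Ball}(x)\le 1\) iff \(x\in\Ball\).

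For part~\cref{item:norm-construction-dual}, observe that \(\norm{y}^* = \max_{x \in \Ball}\iprodt{x}{y}\) is the classical support function of~\(\Ball\), and is finite-valued by compactness. Absolute homogeneity uses \(\Ball=-\Ball\): for \(\lambda\ge 0\) it is immediate, and for \(\lambda<0\) the substitution \(x \mapsto -x\) over~\(\Ball\) leaves the feasible region fixed, yielding the factor~\(\abs{\lambda}\). The triangle inequality is immediate from the subadditivity of maxima applied to the linear functional \(x \mapsto \iprodt{x}{y+y'}\). Positive definiteness uses that \(0\in\Ball\) (giving \(\norm{y}^*\ge 0\)) and that \(0\) is interior (giving the implication \(\norm{y}^*=0 \implies y=0\), since \(\iprodt{x}{y}\le 0\) for all \(x\) in a neighborhood of the origin forces \(y=0\)). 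The identification of the unit ball is essentially the definition of the polar: \(\norm{y}^* \le 1\) iff \(\iprodt{x}{y} \le 1\) for every \(x\in\Ball\), which is exactly \(y \in \Ball^{\polar}\).

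The routine steps are the homogeneity and triangle inequality checks; the one place where a little care is needed is the positive definiteness in each part, as it is the only point where the interior hypothesis on~\(0\) is indispensable (for the primal norm, to rule out \(\gamma_{\Ball}(x) = +\infty\) and to pair with compactness for the ``only~if'' direction; for the dual norm, to conclude that \(y=0\) from \(\max_{x\in\Ball}\iprodt{x}{y}=0\)). No deep convex analysis is invoked; each step reduces to elementary manipulations of the defining formulas.
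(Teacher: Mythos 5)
Your proposal is correct and follows essentially the same route as the paper: verify the three norm axioms directly from the hypotheses (boundedness for positive definiteness, \(\Ball=-\Ball\) for homogeneity, convexity for the triangle inequality, and \(0\) interior for finiteness of \(\gamma_{\Ball}\) and definiteness of the support function), then identify the unit balls via attainment of the infimum and the definition of the polar. Your version is marginally more explicit about finiteness of \(\gamma_{\Ball}\), but the substance is identical.
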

\begin{proof}
  \Cref{item:norm-construction-primal}:
  Clearly \(\norm{x} \geq 0\) for every \(x \in \Reals^V\) and
  \(\norm{0} = 0\).
  If \(x \in \Reals^V\) is nonzero and \(\mu \in \Reals_+\), then
  \begin{equation}
    \label{eq:gauge-rescaling}
    x \in \mu\Ball
    \iff
    \mu > 0
    \text{ and }
    \tfrac{x}{\mu} \in \Ball.
  \end{equation}
  Since \(\Ball\) is bounded, there is \(\eps > 0\) such that
  \(\gamma_{\Ball}(x) = \inf\setst{\mu \geq \eps}{\tfrac{x}{\mu} \in
    \Ball}\).
  In~particular, the `\(\inf\)' in \cref{eq:def-Minkowski-functional}
  is attained by compactness of~\(\Ball\), and \(\norm{\cdot}\) is
  positive definite.
  Absolute homogeneity of~\(\norm{\cdot}\) follows from
  \(\Ball=-\Ball\).
  For the triangle inequality, let \(x,y \in \Reals^V\) be nonzero,
  and set \(\mu \coloneqq \norm{x} > 0\) and
  \(\eta \coloneqq \norm{y} > 0\).
  Since the `\(\inf\)' is attained, we have from
  \cref{eq:gauge-rescaling} that
  \(\tfrac{x}{\mu},\tfrac{y}{\eta} \in \Ball\).
  Finally, since \(\Ball\) is convex,
  \(\tfrac{x+y}{\mu+\eta} = \tfrac{\mu}{\mu+\eta} \tfrac{x}{\mu} +
  \tfrac{\eta}{\mu+\eta} \tfrac{y}{\eta} \in \Ball\), whence
  \(\norm{x+y} = \gamma_{\Ball}(x+y) \leq \mu+\eta =
  \norm{x}+\norm{y}\).
  Hence, \(\norm{\cdot}\) is a norm, and its unit ball is~\(\Ball\)
  since the `\(\inf\)' is always attained.

  \Cref{item:norm-construction-dual}:
  Attainment in the~`\(\max\)' follows from compactness of~\(\Ball\).
  Positive definiteness is a consequence of \(0\) being in the
  interior of~\(\Ball\).
  Absolute homogeneity follows from \(\Ball = -\Ball\), and the
  triangle inequality follows from linearity and basic properties of
  `\(\max\)'.
  Hence, \(\norm{\cdot}^*\) is a norm, and its unit ball is
  \(\setst{y \in \Reals^V}{\norm{y}^* \leq 1} = \setst{y \in
    \Reals^V}{\iprodt{x}{y} \leq 1,\forall x \in \Ball} =
  \Ball^{\polar}\).
\end{proof}

We can now state the duality properties of norms:
\begin{theorem}[Norm Duality]
  \label{thm:norm-duality}
  Let \(\norm{\cdot}\) be a norm on~\(\Reals^V\) with unit
  ball~\(\Ball\).
  Then:
  \begin{eqenum}[list:norm-duality]
  \item\label{item:norm-unit-ball} \(\Ball\) is a compact convex set
    having \(0\) in its interior and \(\Ball = -\Ball\);
  \item\label{item:dual-norm-closed} \(\norm{\cdot}^*\) is a norm with
    unit ball~\(\Ball^{\polar}\);
  \item\label{item:norm-duality}
    \(\norm{\cdot}^{**} = \norm{\cdot}\) and, equivalently,
    \(\Ball^{\polar\polar} = \Ball\);
  \item\label{item:norm-cauchy}
    \(\iprodt{x}{y} \leq \norm{x}\norm{y}^*\) for every
    \(x,y \in \Reals^V\).
  \end{eqenum}
\end{theorem}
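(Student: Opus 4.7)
The plan is to handle the four claims essentially in the order (i) $\to$ (ii) $\to$ (iv) $\to$ (iii), reducing most of the work to \cref{prop:norm-construction} and a single application of the bipolar identity $\Ball^{\polar\polar} = \Ball$.

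For \cref{item:norm-unit-ball}, I would argue each property of~\(\Ball\) directly from the norm axioms. Symmetry \(\Ball = -\Ball\) is immediate from \cref{item:norm-homog} with \(\lambda = -1\). Convexity follows at once from \cref{item:norm-triangle,item:norm-homog}: if \(\norm{x},\norm{y} \leq 1\) and \(\lambda \in [0,1]\), then \(\norm{\lambda x + (1-\lambda) y} \leq \lambda \norm{x} + (1-\lambda)\norm{y} \leq 1\). To get compactness and the interior condition in one stroke, I would compare \(\norm{\cdot}\) to the Euclidean norm \(\norm{\cdot}_2\) on~\(\Reals^V\): expanding a vector in the standard basis and applying \crefrange{item:norm-homog}{item:norm-triangle} yields a constant \(M > 0\) with \(\norm{x} \leq M \norm{x}_2\), so \(\norm{\cdot}\) is continuous. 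By positive definiteness~\cref{item:norm-pd}, \(\norm{\cdot}\) attains a strictly positive minimum \(m\) on the Euclidean unit sphere (which is compact), hence \(\norm{x} \geq m\norm{x}_2\) for all~\(x\). The upper estimate shows that \(\Ball\) contains the Euclidean ball of radius \(1/M\), placing \(0\) in its interior; the lower estimate shows \(\Ball\) is bounded, while continuity of \(\norm{\cdot}\) shows it is closed.

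With \cref{item:norm-unit-ball} in hand, \cref{item:dual-norm-closed} is immediate: the hypotheses of \cref{prop:norm-construction} are satisfied by~\(\Ball\), and the formula in~\cref{eq:def-dual-norm} is precisely the function treated in \cref{item:norm-construction-dual}, so it is a norm with unit ball~\(\Ball^{\polar}\). I would then dispatch \cref{item:norm-cauchy} before \cref{item:norm-duality}, since it is the easiest: the inequality is trivial if \(x = 0\); otherwise the vector \(x/\norm{x}\) lies in~\(\Ball\), so by the definition of \(\norm{\cdot}^*\) we have \(\iprodt{x/\norm{x}}{y} \leq \norm{y}^*\), and multiplying by \(\norm{x}\) gives the claim.

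For \cref{item:norm-duality}, the key input is the bipolar identity \(\Ball^{\polar\polar} = \Ball\), which holds because \(\Ball\) is closed, convex, and contains the origin. Applying \cref{item:dual-norm-closed} twice, \(\norm{\cdot}^{**}\) is a norm whose unit ball is \(\Ball^{\polar\polar} = \Ball\); since a norm is determined by its unit ball (via the Minkowski functional, as in \cref{item:norm-construction-primal}), \(\norm{\cdot}^{**} = \norm{\cdot}\). The only step I expect to require any care is the bipolar identity itself; the cleanest self-contained route is a hyperplane separation argument: if \(z \notin \Ball\), the closed convex set~\(\Ball\) and the point~\(z\) can be separated by some \(y \in \Reals^V\) with \(\iprodt{x}{y} \leq 1\) for all \(x \in \Ball\) and \(\iprodt{z}{y} > 1\), i.e., \(y \in \Ball^{\polar}\) certifies \(z \notin \Ball^{\polar\polar}\); the reverse inclusion \(\Ball \subseteq \Ball^{\polar\polar}\) is a direct consequence of the definition of polar. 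This separation step is the main technical obstacle, though it is a standard consequence of the finite-dimensional Hahn–Banach theorem applied to the closed convex set~\(\Ball\).
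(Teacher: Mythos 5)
Your proposal is correct, and for \cref{item:norm-unit-ball}, \cref{item:dual-norm-closed}, and \cref{item:norm-cauchy} it follows essentially the same route as the paper: symmetry and convexity from the norm axioms, boundedness from the positive minimum of \(\norm{\cdot}\) on the Euclidean sphere, \cref{item:dual-norm-closed} by invoking \cref{item:norm-construction-dual}, and the Cauchy-type inequality by normalizing \(x\). (Your extra step deriving \(\norm{x} \leq M\norm[2]{x}\) via the standard basis is a fine substitute for the paper's terser ``convex, hence continuous''.) The genuine divergence is in \cref{item:norm-duality}: the paper simply cites the Hahn--Banach-based fact \(\norm{\cdot}^{**} = \norm{\cdot}\) from a textbook and then reads off \(\Ball^{\polar\polar} = \Ball\) from \cref{item:dual-norm-closed}, whereas you argue in the opposite direction, proving the bipolar identity \(\Ball^{\polar\polar} = \Ball\) directly by hyperplane separation of a point \(z \notin \Ball\) from the compact convex set \(\Ball\), and then recovering \(\norm{\cdot}^{**} = \norm{\cdot}\) because a norm is determined by its unit ball via \cref{item:norm-construction-primal}. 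Both arguments ultimately rest on separation/Hahn--Banach; yours is more self-contained (no external citation), at the cost of one technical step you gloss over: after separating you must rescale the separating functional so that \(\sup_{x \in \Ball}\iprodt{x}{y} \leq 1 < \iprodt{z}{y}\), which is legitimate precisely because \(0\) lies in the interior of \(\Ball\) (so the supremum is strictly positive for \(y \neq 0\)); this is routine but worth stating, and it is exactly the point where \cref{item:norm-unit-ball} is used.
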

\begin{proof}
  \Cref{item:norm-unit-ball}:
  Absolute homogeneity shows that \(\Ball = -\Ball\).
  Absolute homogeneity and the triangle inequality show that
  \(\norm{\cdot}\) is a convex function, and thus continuous.
  Hence, \(0\) is in the interior of~\(\Ball\).
  Since \(\Ball = \setst{x \in \Reals^V}{\norm{x}\leq 1}\) is a
  sub-level set of~\(\norm{\cdot}\), it is both convex and closed.
  Set
  \(\mu \coloneqq \min\setst{\norm{x}}{x \in \Reals^V,\, \norm[2]{x} =
    1} > 0\), where \(\norm[2]{\cdot}\) is the usual euclidean norm.
  Then \(\mu\norm[2]{x} \leq \norm{x}\) for every \(x \in \Reals^V\).
  Hence, \(\Ball\) is bounded (with respect to the euclidean norm).

  \Cref{item:dual-norm-closed}:
  Immediate from \cref{item:norm-unit-ball,item:norm-construction-dual}.

  \Cref{item:norm-duality}:
  It is a well known consequence of the Hahn-Banach Theorem that the
  dual of the dual norm~\(\norm{\cdot}^*\) is the original
  norm~\(\norm{\cdot}\), i.e., \(\norm{\cdot}^{**} = \norm{\cdot}\);
  see, e.g., \cite[Ch.~IV,Prop.~1.3]{Lang93a}.
  Hence, its unit ball~\(\Ball\) equals \(\Ball^{\polar\polar}\)
  by~\cref{item:dual-norm-closed}.

  \Cref{item:norm-cauchy}:
  If \(x = 0\), there is nothing to prove.
  If \(x \neq 0\) then for
  \(\lambda \coloneqq \norm{x} > 0\) we have
  \(\norm{\lambda^{-1}x}=1\), whence \(\iprodt{(\lambda^{-1}x)}{y}
  \leq \norm{y}^*\).
\end{proof}

By \cref{item:norm-duality,item:dual-norm-closed}, if \(\norm{\cdot}\)
is a norm with unit ball~\(\Ball\), then
\(\norm{x} = \norm{x}^{**} = \max_{y \in \Ball^{\polar}}
\iprodt{x}{y}\).
Hence, \cref{item:norm-unit-ball,item:dual-norm-closed} show that
\emph{every} norm arises as in the construction in
\cref{item:norm-construction-dual}.
Moreover, whenever \(\Ball \subseteq \Reals^V\) satisfies the
hypotheses of \cref{prop:norm-construction}, \(\Ball\) is the unit
ball of some norm by \cref{item:norm-construction-primal}, whence
\(\Ball^{\polar\polar} = \Ball\) by \cref{item:norm-duality}.

Many norms are \emph{sign-invariant}, that~is,
\(\norm{x} = \norm{\abs{x}}\) for every \(x \in \Reals^V\).
Note that each \(p\)-norm
\(\norm[p]{x} \coloneqq \paren{\sum_{i \in V}\abs{x_i}^p}^{1/p}\),
with real \(p \geq 1\), is sign-invariant, and so is the
\(\infty\)-norm
\(\norm[\infty]{x} \coloneqq \max_{i \in V}\abs{x_i}\).
The norms \(\norm[\alpha,G]{\cdot}\) and \(\norm[\chi_f,G]{\cdot}\)
from~\cref{eq:alpha-chif-yield-norms} are also sign-invariant by
definition.
Let us call \(\Xcal \subseteq \Reals^V\) \emph{sign-symmetric} if, for
every \(x \in \Reals^V\), we have \(x \in \Xcal\) if and only if
\(\abs{x} \in \Xcal\).
Then the unit ball~\(\Ball\) of a sign-invariant norm \(\norm{\cdot}\)
is sign-symmetric, whence all the information encoded in~\(\Ball\) is
contained in the wedge
\begin{equation*}
  \Ccal \coloneqq \Ball \cap \Reals_+^V
\end{equation*}
of~\(\Ball\) that lies in the nonnegative orthant; that~is, we can
recover~\(\Ball\) from~\(\Ccal\), since
\begin{equation*}
  \Ball = \setst{x \in \Reals^V}{\abs{x} \in \Ccal}.
\end{equation*}
We will prove below (see~\cref{item:gauge-corner}) that \(\Ccal\) is a
\emph{convex corner}, i.e., \(\Ccal\) is a lower-comprehensive compact
convex set with nonempty interior that lies in the nonnegative orthant
\(\Reals_+^V\).
We say that \(\Xcal \subseteq \Reals_+^V\) is
\emph{lower-comprehensive} if, whenever \(0 \leq x \leq y \in \Xcal\),
we have \(x \in \Xcal\).
We call \(\Ccal = \setst{x \in \Reals_+^V}{\norm{x} \leq 1}\) the
\emph{unit convex corner} of the sign-invariant norm~\(\norm{\cdot}\).
The \emph{antiblocker} of \(\Xcal \subseteq \Reals_+^V\) is
\begin{equation*}
  \abl(\Xcal) \coloneqq \Xcal^{\polar} \cap \Reals_+^V.
\end{equation*}

The unit convex corner of the sign-invariant norm
\(\norm[\alpha,G]{\cdot}\) from~\cref{eq:alpha-chif-yield-norms} is
\begin{equation*}
  \setst{
    x \in \Reals_+^V
  }{
    \alpha(G,x) \leq 1
  }
  =
  \setst{
    x \in \Reals_+^V
  }{
    \iprodt{\incidvector{S}}{x} \leq 1,
    \forall S \in \Scal(G)
  }
  =
  \abl(\STAB(G)),
\end{equation*}
and the unit convex corner of the sign-invariant norm
\(\norm[\chi_f,G]{\cdot}\) is
\begin{equation*}
  \setst{
    x \in \Reals_+^V
  }{
    \chi_f(G,x) \leq 1
  }
  =
  \setst{
    x \in \Reals_+^V
  }{
    \iprodt{x}{y} \leq 1,
    \forall y \in \QSTAB(\overline{G})
  }
  =
  \abl(\QSTAB(\overline{G})).
\end{equation*}

The next result shows how to construct sign-invariant norms from
sign-symmetric sets; it is the sign-invariant counterpart to
\cref{prop:norm-construction}.
\begin{proposition}[Construction of Sign-Invariant Norms]
  \label{prop:snorm-construction}
  Let \(\Ball \subseteq \Reals^V\) be a sign-symmetric compact convex
  set having \(0\) in its interior.  Then:
  \begin{eqenum}[list:snorm-construction]
  \item\label{item:snorm-construction-primal} the function
    \(\norm{\cdot} \coloneqq \gamma_{\Ball}(\cdot)\) is a
    sign-invariant norm on~\(\Reals^V\) with unit ball~\(\Ball\);
  \item\label{item:snorm-construction-dual} the function
    \(\norm{\cdot}^* \colon y \in \Reals^V \mapsto \max_{x \in \Ball}
    \iprodt{x}{y}\) is a sign-invariant norm on~\(\Reals^V\) with unit
    ball~\(\Ball^{\polar}\).
  \end{eqenum}
\end{proposition}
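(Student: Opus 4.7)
The plan is to reduce both parts to \cref{prop:norm-construction}. Sign-symmetry of \(\Ball\) gives \(\Ball = -\Ball\) (since \(\abs{-x} = \abs{x}\)), so \cref{prop:norm-construction} already ensures that both functions are norms on~\(\Reals^V\) with the stated unit balls; only sign-invariance remains to be verified.

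The engine for both parts is the observation that the Minkowski functional of any sign-symmetric convex body is sign-invariant. Indeed, for \(\mu > 0\), sign-symmetry gives \(x/\mu \in \Ball \iff \abs{x/\mu} \in \Ball \iff \abs{x} \in \mu\Ball\), so taking the infimum over \(\mu\) in~\cref{eq:def-Minkowski-functional} yields \(\gamma_{\Ball}(x) = \gamma_{\Ball}(\abs{x})\). This immediately proves~\cref{item:snorm-construction-primal}. For~\cref{item:snorm-construction-dual}, since \(\norm{\cdot}^*\) is the Minkowski functional of~\(\Ball^{\polar}\) by~\cref{item:norm-construction-dual}, it suffices to verify that \(\Ball^{\polar}\) is itself sign-symmetric, after which the same observation applies.

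To verify this, I would first record the auxiliary fact that \(\Ball\) is closed under arbitrary componentwise sign flips: for every \(x \in \Ball\) and every \(\sigma \in \{-1,+1\}^V\), we have \(\abs{\sigma \cdot x} = \abs{x}\), hence \(\sigma \cdot x \in \Ball\) by sign-symmetry of~\(\Ball\). Then, for \(y \in \Ball^{\polar}\) and any \(\sigma\), the identity \(\iprodt{x}{\sigma \cdot y} = \iprodt{\sigma \cdot x}{y} \leq 1\) for every \(x \in \Ball\) forces \(\sigma \cdot y \in \Ball^{\polar}\); specializing \(\sigma\) to the componentwise signs of~\(y\) yields \(\abs{y} \in \Ball^{\polar}\), and the converse direction is analogous. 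The proof presents no real obstacle — it is bookkeeping around the Minkowski functional — but the single subtle point is upgrading the global reflection \(\Ball = -\Ball\) to closure under componentwise sign flips, which is what allows sign-symmetry to transfer from~\(\Ball\) to~\(\Ball^{\polar}\).
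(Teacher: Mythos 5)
Your proof is correct. Part~(i) is exactly the paper's argument: sign-symmetry of \(\Ball\) makes the Minkowski functional sign-invariant, since for \(\mu > 0\) one has \(x \in \mu\Ball \iff \abs{x} \in \mu\Ball\). For part~(ii) you take a somewhat different route: the paper proves \(\norm{y}^* = \norm{\abs{y}}^*\) directly from the formula \(\norm{y}^* = \max_{x \in \Ball} \iprodt{x}{y}\), observing that the maximum is attained at some \(x\) whose component signs match those of \(y\) (which is precisely where closure of \(\Ball\) under componentwise sign flips enters), whereas you first prove that \(\Ball^{\polar}\) is itself sign-symmetric and then transfer sign-invariance through the Minkowski functional of \(\Ball^{\polar}\). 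Both arguments rest on the same key observation — a sign-symmetric set is closed under arbitrary componentwise sign flips, which you rightly single out as the one subtle point — so the difference is organizational: your version produces sign-symmetry of \(\Ball^{\polar}\) as an explicit intermediate fact (in the paper this emerges only afterwards, via \cref{item:snorm-unit-sball} applied to the dual norm), at the price of one extra identification, namely that \(\norm{\cdot}^*\) equals \(\gamma_{\Ball^{\polar}}\). On that point, note that \cref{item:norm-construction-dual} literally states only that \(\norm{\cdot}^*\) is a norm with unit ball \(\Ball^{\polar}\); the identification with the Minkowski functional is the standard one-line fact that a norm is the gauge of its unit ball (by absolute homogeneity), a fact the paper itself invokes elsewhere, so this is a citation nuance rather than a gap.
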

\begin{proof}
  Since \(\Ball\) is sign-symmetric, we have \(\Ball = -\Ball\).
  \Cref{item:snorm-construction-primal}:
  By \cref{item:norm-construction-primal}, \(\norm{\cdot}\) is a norm
  on~\(\Reals^V\) with unit ball~\(\Ball\).
  If \(x \in \Reals^V\), then
  \(\norm{x} = \inf\setst{\mu \in \Reals_+}{x \in \mu\Ball} =
  \inf\setst{\mu \in \Reals_+^V}{\abs{x} \in \mu\Ball} =
  \norm{\abs{x}}\), where the middle equation follows from the
  sign-symmetry of~\(\Ball\).
  Hence, \(\norm{\cdot}\) is sign-invariant.

  \Cref{item:snorm-construction-dual}:
  By \cref{item:norm-construction-dual},
  \(\norm{\cdot}^* \colon y \in \Reals^V \mapsto \max_{x \in \Ball}
  \iprodt{x}{y}\) is a norm with unit ball~\(\Ball^{\polar}\).
  It remains to show that \(\norm{\cdot}^*\) is sign-invariant.
  Let \(y \in \Reals^V\).
  Then
  \(\norm{y}^* = \max_{x \in \Ball} \iprodt{x}{y} = \max_{x \in \Ball}
  \iprodt{x}{\abs{y}} = \norm{\abs{y}}^*\) since, by the sign-symmetry
  of~\(\Ball\), the leftmost \(\max\) is attained by some~\(x\) whose
  components have signs matching those of~\(y\).
\end{proof}

Now we can state the duality results for sign-invariant norms, the
sign-invariant counterpart to \cref{thm:norm-duality}.
\begin{theorem}[Duality of Sign-Invariant Norms]
  \label{thm:sign-norm-duality}
  Let \(\norm{\cdot}\) be a sign-invariant norm on~\(\Reals^V\) with
  unit ball~\(\Ball\).
  Then:
  \begin{eqenum}[list:sign-norm-duality]
  \item\label{item:snorm-unit-sball} \(\Ball\) is a sign-symmetric compact
    convex set having \(0\) in its interior;
  \item\label{item:dual-snorm-closed} \(\norm{\cdot}^*\) is a
    sign-invariant norm with unit ball~\(\Ball^{\polar}\);
  \item\label{item:snorm-duality} \(\norm{\cdot}^{**} = \norm{\cdot}\)
    and, equivalently, \(\Ball^{\polar\polar} = \Ball\);
  \item\label{item:snorm-cauchy}
    \(\iprodt{x}{y} \leq \norm{x}\norm{y}^*\) for every
    \(x,y \in \Reals^V\).
  \end{eqenum}
\end{theorem}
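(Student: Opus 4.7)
The plan is to reduce each part to the already-proved \Cref{thm:norm-duality} together with the sign-invariant refinements furnished by \Cref{prop:snorm-construction}. Since a sign-invariant norm is in particular a norm, the four items correspond directly to the four items of \Cref{thm:norm-duality}, and one only needs to propagate the extra sign-symmetry/sign-invariance data at each step.

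For \cref{item:snorm-unit-sball}, I would invoke \cref{item:norm-unit-ball} to get compactness, convexity, and that $0$ lies in the interior of~$\Ball$. For sign-symmetry, note that $x \in \Ball \iff \norm{x} \leq 1 \iff \norm{\abs{x}} \leq 1 \iff \abs{x} \in \Ball$, where the middle equivalence is the sign-invariance of $\norm{\cdot}$; this is exactly the definition of sign-symmetry. For \cref{item:dual-snorm-closed}, I would first apply \cref{item:dual-norm-closed} to conclude that $\norm{\cdot}^*$ is a norm with unit ball~$\Ball^{\polar}$, and then use \cref{item:snorm-construction-dual} of \Cref{prop:snorm-construction}, whose hypothesis that $\Ball$ be a sign-symmetric compact convex set containing $0$ in its interior is precisely \cref{item:snorm-unit-sball} just established; this yields sign-invariance of~$\norm{\cdot}^*$.

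For \cref{item:snorm-duality}, the statement $\norm{\cdot}^{**} = \norm{\cdot}$ is already \cref{item:norm-duality}, which is proved via the Hahn–Banach Theorem; the equivalence $\Ball^{\polar\polar} = \Ball$ then follows because, by \cref{item:dual-snorm-closed} applied twice, the unit ball of $\norm{\cdot}^{**}$ is $\Ball^{\polar\polar}$, while the unit ball of~$\norm{\cdot}$ is~$\Ball$. Finally, \cref{item:snorm-cauchy} is verbatim \cref{item:norm-cauchy}, which holds for any norm and hence in particular for our sign-invariant one, with no further argument required.

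I do not anticipate any real obstacle here: the content of the theorem is that the sign-invariant category is closed under norm duality, and all the nontrivial analytic facts (closedness of the unit ball, interior containment of~$0$, bipolar identity, Cauchy-type inequality) have already been packaged into \Cref{thm:norm-duality,prop:snorm-construction}. The only care needed is to verify, at the transition between parts, that the hypotheses of \Cref{prop:snorm-construction} are in force, which is exactly what \cref{item:snorm-unit-sball} provides.
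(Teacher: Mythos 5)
Your proposal is correct and follows essentially the same route as the paper: both reduce each item to \cref{thm:norm-duality}, observe that sign-invariance of \(\norm{\cdot}\) makes \(\Ball\) sign-symmetric, and then obtain sign-invariance of \(\norm{\cdot}^*\) from \cref{item:snorm-construction-dual}. Your extra invocation of \cref{item:dual-norm-closed} in part~\cref{item:dual-snorm-closed} is harmless but redundant, since \cref{item:snorm-construction-dual} already delivers both the norm property and the unit ball \(\Ball^{\polar}\).
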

\begin{proof}
  \Cref{item:snorm-unit-sball}:
  By \cref{item:norm-unit-ball}, it suffices to prove that~\(\Ball\)
  is sign-symmetric.
  However, this is immediate since \(\norm{\cdot}\) is sign-invariant.

  \Cref{item:dual-snorm-closed}:
  Immediate from \cref{item:snorm-unit-sball,item:snorm-construction-dual}.

  \Cref{item:snorm-duality,item:snorm-cauchy}:
  Immediate from \cref{item:norm-duality,item:norm-cauchy}, respectively.
\end{proof}

As before, note that
\cref{item:snorm-unit-sball,item:dual-snorm-closed,item:snorm-duality}
imply that \emph{every} sign-invariant norm arises as in the
construction in \cref{item:snorm-construction-dual}.

The next few definitions capture the relevant properties of the
restriction of (sign-invariant) norms to the nonnegative orthant.
A function \(\kappa \colon \Reals_+^V \to \Reals\) is a \emph{gauge}
if
\begin{eqenum}[list:def-gauge]
\item \(\kappa\) is \emph{positive semidefinite}, i.e.,
  \(\kappa(w) \geq 0\) for every \(w \in \Reals_+^V\) and
  \(\kappa(0) = 0\);
\item \(\kappa\) is \emph{positively homogeneous}, i.e.,
  \(\kappa(\lambda w) = \lambda \kappa(w)\) for every scalar
  \(\lambda > 0\) and \(w \in \Reals_+^V\);
\item \(\kappa\) is \emph{sublinear}, i.e.,
  \(\kappa(w + z) \leq \kappa(w) + \kappa(z)\) for every
  \(w,z \in \Reals_+^V\).
\end{eqenum}
A gauge \(\kappa\) is \emph{positive definite} if \(\kappa(w) > 0\)
whenever \(w \in \Reals_+^V\) is nonzero, and \(\kappa\) is
\emph{monotone} if \(\kappa(w) \leq \kappa(z)\) whenever
\(w,z \in \Reals_+^V\) satisfy \(w \leq z\).
Our main interest in positive definite monotone gauges arises from the
easily verified fact that
\begin{equation}
  \label{eq:alpha-chif-pdmgs}
  \text{%
    the functions \(\alpha(G,\cdot)\) and \(\chi_f(G,\cdot)\)
    on~\(\Reals_+^V\) are positive definite monotone gauges.
  }
\end{equation}

The exact connection between sign-invariant norms and positive
definite monotone gauges, which we will use to translate their duality
theories, is subsumed by the following constructions:
\begin{eqenum}[list:snorm-gauge]
\item\label{item:gauge-from-snorm} the restriction to~\(\Reals_+^V\)
  of a sign-invariant norm \(\norm{\cdot}\) on~\(\Reals^V\) is a
  positive definite monotone gauge;
\item\label{item:snorm-from-gauge} if
  \(\kappa \colon \Reals_+^V \to \Reals\) is a positive definite
  monotone gauge, then
  \(\norm[\kappa]{\cdot} \colon x \in \Reals^V \mapsto
  \kappa(\abs{x})\) is a sign-invariant norm.
\end{eqenum}

We shall rely on the following key property: if
\(\Xcal \subseteq \Reals^V\) is a convex and sign-symmetric set, then
\begin{equation}
  \label{eq:sign-symmetric-closure}
  \Xcal \supseteq
  \conv\setst{x^s}{s \in \set{\pm1}^V},
  \qquad
  \forall x \in \Xcal,
\end{equation}
where \(x^s \in \Reals^V\) is defined as \(x^s(i) \coloneqq s(i)x(i)\)
for every \(i \in V\) and \(s \in \set{\pm1}^V\).

To prove \cref{item:gauge-from-snorm}, start by noting that the
restriction to~\(\Reals_+^V\) of any norm is a positive definite
gauge.
Next we prove monotonicity.
Suppose that \(\norm{\cdot}\) is sign-invariant with unit
ball~\(\Ball\).
By \cref{item:snorm-unit-sball,item:snorm-construction-primal}, we
have \(\norm{\cdot} = \gamma_{\Ball}(\cdot)\).
Let \(x,y \in \Reals_+^V\) with \(x \leq y\).
If \(\mu \in \Reals_+\) satisfies \(y \in \mu\Ball\), then by
\cref{eq:sign-symmetric-closure} we get
\(\mu\Ball \supseteq \conv\setst{y^{s}}{s \in \set{\pm1}^V} \ni x\).
Hence,
\(\norm{x} = \gamma_{\Ball}(x) \leq \gamma_{\Ball}(y) = \norm{y}\).
This completes the proof of \cref{item:gauge-from-snorm}.

It remains to prove \cref{item:snorm-from-gauge}.
\Cref{item:norm-pd} follows from positive definiteness of~\(\kappa\)
and \cref{item:norm-homog} follows from positive homogeneity
of~\(\kappa\).
For \cref{item:norm-triangle}, if \(x,y \in \Reals^V\), then
\(\norm[\kappa]{x+y} = \kappa(\abs{x+y}) \leq \kappa(\abs{x}+\abs{y})
\leq \kappa(\abs{x}) + \kappa(\abs{y}) = \norm[\kappa]{x} +
\norm[\kappa]{y}\) by monotonicity with the triangle inequality
\(\abs{x+y} \leq \abs{x}+\abs{y}\), and by sublinearity of~\(\kappa\).
It is obvious that \(\norm[\kappa]{\cdot}\) is sign-invariant.
This completes the proof of~\cref{item:snorm-from-gauge}.

Note that \cref{eq:alpha-chif-pdmgs,item:snorm-from-gauge} finally
prove \cref{eq:alpha-chif-yield-norms}.
The \emph{unit convex corner} of a positive definite monotone
gauge~\(\kappa\) is the unit convex corner of the norm
\(\norm[\kappa]{\cdot}\) defined in~\cref{item:snorm-from-gauge}.
We can now state the gauge counterpart to
\cref{prop:norm-construction,prop:snorm-construction}.
\begin{proposition}[Construction of Gauges]
  \label{prop:gauge-construction}
  Let \(\Ccal \subseteq \Reals_+^V\) be a convex corner.  Then:
  \begin{eqenum}[list:gauge-construction]
  \item\label{item:gauge-construction-primal} the function
    \(x \in \Reals_+^V \mapsto \gamma_{\Ccal}(x)\) is a positive
    definite monotone gauge with unit convex corner~\(\Ccal\);
  \item\label{item:gauge-construction-dual} the function
    \(y \in \Reals_+^V \mapsto \max_{x \in \Ccal} \iprodt{x}{y}\) is a
    positive definite monotone gauge with unit convex
    corner~\(\abl(\Ccal)\).
  \end{eqenum}
\end{proposition}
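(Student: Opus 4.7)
The plan is to reduce the proposition to \cref{prop:snorm-construction} and \cref{item:gauge-from-snorm} by symmetrizing $\Ccal$ to a sign-symmetric subset of $\Reals^V$. Concretely, I would set $\Ball \coloneqq \setst{x \in \Reals^V}{\abs{x} \in \Ccal}$, show that $\Ball$ is a sign-symmetric compact convex set with $0$ in its interior, and then identify the two gauges in the statement as the restrictions to $\Reals_+^V$ of the two sign-invariant norms produced by \cref{prop:snorm-construction} applied to $\Ball$.

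The verification of the hypotheses on $\Ball$ is the technical heart of the argument, and is where the convex-corner structure of $\Ccal$ is genuinely used. Sign-symmetry and boundedness are immediate from the definition and the boundedness of $\Ccal$, and closedness follows since $\Ball$ is the preimage of the closed set $\Ccal$ under the continuous map $\abs{\cdot}$. Convexity uses lower-comprehensiveness: for $x,y \in \Ball$ and $\lambda \in [0,1]$, the componentwise bound $\abs{\lambda x + (1-\lambda) y} \leq \lambda \abs{x} + (1-\lambda) \abs{y}$ places the right-hand side in $\Ccal$ by convexity of $\Ccal$, and lower-comprehensiveness then pulls the left-hand side into $\Ccal$ as well. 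For $0$ to be in the interior of $\Ball$, I combine the nonempty-interior and lower-comprehensive properties of $\Ccal$: any interior point of $\Ccal$ lies in the open positive orthant, so $\Ccal$ contains a box $[0,\eps\ones]$ for some $\eps > 0$, and then $[-\eps\ones,\eps\ones] \subseteq \Ball$.

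Granting these facts, \cref{prop:snorm-construction} produces two sign-invariant norms on $\Reals^V$, namely $\gamma_\Ball$ and $y \mapsto \max_{x \in \Ball} \iprodt{x}{y}$, with unit balls $\Ball$ and $\Ball^{\polar}$ respectively, and \cref{item:gauge-from-snorm} ensures that their restrictions to $\Reals_+^V$ are positive definite monotone gauges. For $x \in \Reals_+^V$, the condition $x \in \mu\Ball$ is equivalent to $x \in \mu\Ccal$, so the restriction of $\gamma_\Ball$ to $\Reals_+^V$ is exactly $\gamma_\Ccal$, and the unit convex corner $\Ball \cap \Reals_+^V$ equals $\Ccal$, establishing~(i). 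For $y \in \Reals_+^V$ and $x \in \Ball$, the inequality $\iprodt{x}{y} \leq \iprodt{\abs{x}}{y}$ together with $\abs{x} \in \Ccal$ shows that $\max_{x \in \Ball}\iprodt{x}{y} = \max_{x \in \Ccal}\iprodt{x}{y}$; an analogous two-sided argument yields $\Ball^{\polar} \cap \Reals_+^V = \Ccal^{\polar} \cap \Reals_+^V = \abl(\Ccal)$, establishing~(ii). The main obstacle is the verification of convexity and interiority of $\Ball$, both of which truly require the lower-comprehensive hypothesis on~$\Ccal$; once these are in place, the remainder is a bookkeeping translation between sign-symmetric sets in $\Reals^V$ and convex corners in $\Reals_+^V$.
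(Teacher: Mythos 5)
Your proposal is correct. Part~(i) is essentially the paper's own argument: the paper also symmetrizes \(\Ccal\) to \(\Ball = \setst{x \in \Reals^V}{\abs{x} \in \Ccal}\), verifies sign-symmetry, compactness, convexity (via lower-comprehensiveness, exactly as you do), and interiority of \(0\) (via \(\eps\ones \in \Ccal\)), and then invokes \cref{item:snorm-construction-primal,item:gauge-from-snorm}. For part~(ii) you take a mildly different route: you push the reduction through \cref{item:snorm-construction-dual} as well, obtaining the dual sign-invariant norm with unit ball \(\Ball^{\polar}\), and then you must supply the two identifications \(\max_{x \in \Ball} \iprodt{x}{y} = \max_{x \in \Ccal} \iprodt{x}{y}\) and \(\Ball^{\polar} \cap \Reals_+^V = \abl(\Ccal)\) for \(y \in \Reals_+^V\); your argument via \(\iprodt{x}{y} \leq \iprodt{\abs{x}}{y}\) with \(\abs{x} \in \Ccal\) handles both correctly. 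The paper instead verifies directly that \(\eta(y) \coloneqq \max_{x \in \Ccal}\iprodt{x}{y}\) is a positive definite monotone gauge (positive definiteness from the fact that \(\eps y \in \Ccal\) for small \(\eps > 0\), monotonicity from nonnegativity of the maximizer) and reads off the unit convex corner \(\setst{y \in \Reals_+^V}{\eta(y) \leq 1} = \abl(\Ccal)\) from the definitions. The direct check is a bit shorter and does not need the identification lemmas; your reduction buys uniformity, in that both halves of the proposition come from the single symmetrization \(\Ball\) plus the already-established norm machinery. Either way the convex-corner hypotheses are used exactly where you say they are, so there is no gap.
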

\begin{proof}
  \Cref{item:gauge-construction-primal}:
  Define
  \(\Ball \coloneqq \setst{x \in \Reals^V}{\abs{x} \in \Ccal}\).
  We claim that \(\Ball\) is a sign-symmetric compact convex set
  having \(0\) in its interior.
  It is clear that \(\Ball\) is sign-symmetric and compact.
  Since \(\Ccal\) is lower-comprehensive and has nonempty interior,
  there is \(\eps > 0\) such that \(\eps\ones \in \Ccal\).
  Together with sign-symmetry of~\(\Ball\), this shows that \(0\) lies
  in the interior of~\(\Ball\).
  It remains to prove convexity of~\(\Ball\).
  Let \(x,y \in \Ball\), and let \(\lambda \in [0,1]\).
  Then
  \(\abs{\lambda x + (1-\lambda)y} \leq \lambda\abs{x} +
  (1-\lambda)\abs{y}\) by the triangle inequality, and the RHS lies
  in~\(\Ccal\) by convexity.
  Since \(\Ccal\) is lower-comprehensive, we get
  \(\abs{\lambda x + (1-\lambda)y} \in \Ccal\), whence
  \(\lambda x + (1-\lambda)y \in \Ball\).
  By \cref{item:snorm-construction-primal},
  \(\norm{\cdot} \coloneqq \gamma_{\Ball}(\cdot)\) is a sign-invariant
  norm with unit ball~\(\Ball\), whence its restriction
  to~\(\Reals_+^V\) is a positive definite monotone gauge by
  \cref{item:gauge-from-snorm}.

  \Cref{item:gauge-construction-dual}:
  Define
  \(\eta \colon y \in \Reals_+^V \mapsto \max_{x \in \Ccal}
  \iprodt{x}{y}\); the `\(\max\)' is attained (and thus real-valued)
  by compactness of~\(\Ccal\).
  Since \(\Ccal \subseteq \Reals_+^V\) is lower comprehensive and has
  nonempty interior, for every \(y \in \Reals_+^V\), there exists
  \(\eps > 0\) such that \(\eps y \in \Ccal\).
  Hence, \(\eta\) is positive definite.
  Clearly, \(\eta\) is positively homogeneous and sublinear.
  Thus, \(\eta\) is a positive definite gauge.
  If \(y,z \in \Reals_+^V\) satisfy \(y \leq z\) and \(x \in \Ccal\)
  attains the `\(\max\)' in \(\eta(y)\), then
  \(\eta(y) = \iprodt{x}{y} \le \iprodt{x}{z} \le \eta(z)\) since
  \(x \geq 0\).
  In other words, \(\eta\) is monotone.
  The unit convex corner of~\(\eta\) is
  \(\setst{y \in \Reals_+^V}{\eta(y) \leq 1} = \setst{y \in
    \Reals_+^V}{\iprodt{x}{y} \leq 1,\forall x \in \Ccal} =
  \abl(\Ccal)\).
\end{proof}

Let \(\kappa \colon \Reals_+^V \to \Reals\) be a positive definite
monotone gauge.
The \emph{dual (gauge)} of \(\kappa\) is the function
\(\kappa^{\polar} \colon \Reals_+^V \to \Reals\) defined by
\begin{equation}
  \label{eq:def-polar-gauge}
  \kappa^{\polar}(z)
  \coloneqq
  \max\setst{
    \iprodt{w}{z}
  }{
    w \in \Reals_+^V,\,
    \kappa(w) \leq 1
  },
  \qquad
  \forall z \in \Reals_+^V.
\end{equation}
(We do not adopt the more parallel notation \(\kappa^*\) because, in
the convex analysis literature, \(\kappa^*\) stands for the
``conjugate'' of the function~\(\kappa\), a related though different
notion of dual object.)
The next result shows how the duality properties from sign-invariant
norms in \cref{thm:sign-norm-duality} translate to dual gauges.
\begin{theorem}[Gauge Duality]
  \label{thm:gauge-duality}
  Let \(\kappa \colon \Reals_+^V \to \Reals\) be a positive definite
  monotone gauge.
  Let \(\Ball\) be the unit ball of the sign-invariant
  norm~\(\norm[\kappa]{\cdot}\), and set
  \(\Ccal \coloneqq \Ball \cap \Reals_+^V\).
  Then:
  \begin{eqenum}[list:gauge-duality]
  \item\label{item:gauge-corner} \(\Ccal\) is a convex corner.
  \item\label{item:polar-gauge-closed} \(\kappa^{\polar}\) is a
    positive definite monotone gauge with unit convex corner
    \(\abl(\Ccal)\);
  \item\label{item:gauge-duality} \(\kappa^{\polar\polar} = \kappa\)
    and, equivalently, \(\abl(\abl(\Ccal)) = \Ccal\);
  \item\label{item:gauge-cauchy}
    \(\iprodt{w}{z} \leq \kappa(w) \kappa^{\polar}(z)\) for every
    \(w,z \in \Reals_+^V\).
  \end{eqenum}
\end{theorem}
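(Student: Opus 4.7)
The plan is to bootstrap this theorem from the sign-invariant norm duality in \cref{thm:sign-norm-duality}, using \cref{item:snorm-from-gauge} to pass freely between $\kappa$ and its sign-invariant extension $\norm[\kappa]{\cdot} \colon x \mapsto \kappa(\abs{x})$. The key dictionary, which I would establish up front, is that $\Ccal = \Ball \cap \Reals_+^V$ and, by sign-symmetry of~$\Ball$, also $\Ball = \{x \in \Reals^V : \abs{x} \in \Ccal\}$; moreover, for every $y \in \Reals_+^V$,
\[
  \max_{x \in \Ball}\iprodt{x}{y} \;=\; \max_{x \in \Ball}\iprodt{\abs{x}}{y} \;=\; \max_{u \in \Ccal}\iprodt{u}{y},
\]
where the first equality uses sign-symmetry of~$\Ball$ to flip the signs of~$x$ so as to match those of~$y$. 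Consequently $\Ball^{\polar}\cap\Reals_+^V = \abl(\Ccal)$ and $\norm[\kappa]{y}^{*} = \kappa^{\polar}(\abs{y})$ for every $y \in \Reals^V$.

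For~\cref{item:gauge-corner}, $\Ccal$ is compact and convex as the intersection of $\Ball$ (compact convex by~\cref{item:snorm-unit-sball}) with the closed orthant, lies in~$\Reals_+^V$ by construction, and is lower-comprehensive by monotonicity of~$\kappa$; nonempty interior follows from $0$ being interior to~$\Ball$, which places some $\eps\ones$ in~$\Ccal$, after which lower-comprehensiveness supplies a full-dimensional box. For~\cref{item:polar-gauge-closed}, I would first rewrite $\kappa^{\polar}(z) = \max_{w \in \Ccal}\iprodt{w}{z}$ for $z \in \Reals_+^V$ (the feasible region in~\cref{eq:def-polar-gauge} can be replaced by its lower-comprehensive hull, which is exactly~$\Ccal$, without changing the maximum since $z \geq 0$) and then apply~\cref{item:gauge-construction-dual} of~\cref{prop:gauge-construction} directly to the convex corner~$\Ccal$; this simultaneously delivers that $\kappa^{\polar}$ is a positive definite monotone gauge and that its unit convex corner is $\abl(\Ccal)$.

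For~\cref{item:gauge-duality}, applying~\cref{item:polar-gauge-closed} to the positive definite monotone gauge $\kappa^{\polar}$ exhibits $\kappa^{\polar\polar}$ as a positive definite monotone gauge with unit convex corner $\abl(\abl(\Ccal))$. Since any positive definite monotone gauge coincides with the Minkowski functional of its unit convex corner (by~\cref{item:gauge-construction-primal}), it suffices to verify $\abl(\abl(\Ccal)) = \Ccal$; the dictionary together with $\Ball^{\polar\polar} = \Ball$ from~\cref{item:snorm-duality} gives this at once, by intersecting with~$\Reals_+^V$. Finally,~\cref{item:gauge-cauchy} is an immediate specialization of~\cref{item:snorm-cauchy} once the identifications $\norm[\kappa]{w} = \kappa(w)$ and $\norm[\kappa]{z}^{*} = \kappa^{\polar}(z)$ for $w,z \in \Reals_+^V$ are in hand. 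I expect the main technical content to be precisely the sign-symmetry identity $\Ball^{\polar}\cap\Reals_+^V = \abl(\Ccal)$, which handles the translation between polar and antiblocker; once this is in place, each of the four parts reduces to unwinding definitions and invoking \cref{prop:gauge-construction} or \cref{thm:sign-norm-duality}.
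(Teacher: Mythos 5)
Your proposal is correct and takes essentially the same route as the paper's own proof: both reduce everything to \cref{thm:sign-norm-duality} and \cref{prop:gauge-construction} through the identity \(\norm[\kappa]{y}^* = \kappa^{\polar}(\abs{y})\), which is exactly the paper's \cref{eq:dual-norm-and-gauge-commute}, your ``dictionary''. The only deviations are cosmetic---you obtain lower-comprehensiveness of \(\Ccal\) from monotonicity of \(\kappa\) rather than from \cref{eq:sign-symmetric-closure}, and in \cref{item:gauge-duality} you transfer \(\Ball^{\polar\polar}=\Ball\) at the level of convex corners and then recover \(\kappa^{\polar\polar}=\kappa\) via Minkowski functionals, whereas the paper proves the function identity first---and your hedge about passing to a ``lower-comprehensive hull'' is unnecessary, since the feasible region in \cref{eq:def-polar-gauge} is already exactly \(\Ccal\).
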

\begin{proof}
  \Cref{item:gauge-corner}:
  Clearly \(\Ccal \subseteq \Reals_+^V\).
  Since \(\Ball\) is compact and
  convex by \cref{item:snorm-unit-sball}, so is~\(\Ccal\).
  Also, \(\Ccal\) has nonempty interior since \(0\) lies in the
  interior of~\(\Ball\) by \cref{item:snorm-unit-sball}.
  It remains to prove that \(\Ccal\) is lower-comprehensive.
  Let \(y \in \Ccal\).
  By \cref{item:snorm-unit-sball,eq:sign-symmetric-closure}, we have
  \(\Ball \supseteq \conv\setst{y^s}{s \in \set{\pm1}^V} \supseteq
  \setst{x \in \Reals_+^V}{x \leq y}\).
  Hence, every \(x \in \Reals_+^V\) such that \(x \leq y\) also lies
  in~\(\Ccal\).

  \Cref{item:polar-gauge-closed}:
  Immediate from \cref{item:gauge-corner,item:gauge-construction-dual}.

  \Cref{item:gauge-duality}:
  By \cref{item:polar-gauge-closed,item:snorm-from-gauge},
  \(\norm[\kappa^{\polar}]{\cdot} \coloneqq
  \kappa^{\polar}(\abs{\cdot})\) is a sign-invariant norm.
  We claim that
  \begin{equation}
    \label{eq:dual-norm-and-gauge-commute}
    \norm[\kappa^{\polar}]{\cdot} = \norm[\kappa]{\cdot}^*.
  \end{equation}
  For every \(y \in \Reals^V\), we have
  \(
  \norm[\kappa^{\polar}]{y}
  =
  \kappa^{\polar}(\abs{y})
  =
  \max_{x \in \Ccal} \iprodt{x}{\abs{y}}
  =
  \max_{x \in \Ball} \iprodt{x}{\abs{y}}
  =
  \norm[\kappa]{\abs{y}}^*
  =
  \norm[\kappa]{y}^*
  \).
  This also shows that
  \(\norm[\kappa^{\polar\polar}]{\cdot} = \norm[\kappa]{\cdot}^{**}\),
  which equals \(\norm[\kappa]{\cdot}\) by \cref{item:snorm-duality}.
  Hence, for every \(w \in \Reals_+^V\), we have
  \(\kappa^{\polar\polar}(w) = \norm[\kappa^{\polar\polar}]{w} =
  \norm[\kappa]{w}^{**} = \norm[\kappa]{w} = \kappa(w)\).
  The unit convex corners of~\(\kappa\) and \(\kappa^{\polar\polar}\)
  are~\(\Ccal\) and \(\abl(\abl(\Ccal))\), respectively; the latter
  follows from \cref{item:polar-gauge-closed}.
  Since \(\kappa = \kappa^{\polar\polar}\), these unit convex corners
  are the same.

  \Cref{item:gauge-cauchy}:
  Let \(w, z \in \Reals_+^V\).
  By \cref{item:snorm-cauchy,eq:dual-norm-and-gauge-commute},
  \(\iprodt{w}{z} \leq \norm[\kappa]{w}\norm[\kappa]{z}^*
  = \norm[\kappa]{w}\norm[\kappa^{\polar}]{z}
  = \kappa(w)\kappa^{\polar}(z)\).
\end{proof}

Once again,
\cref{item:gauge-corner,item:polar-gauge-closed,item:gauge-duality}
show that \emph{every} positive definite monotone gauge arises as in
the construction of \cref{item:gauge-construction-dual}.  Moreover,
every convex corner is the unit convex corner of some positive
definite monotone gauge by \cref{item:gauge-construction-primal} and
so \cref{item:gauge-duality} yields that
\begin{equation}
  \label{eq:abl-abl}
  \abl\paren{
    \abl\paren{
      \Ccal
    }
  }
  =
  \Ccal
  \qquad
  \text{for every convex corner \(\Ccal \subseteq \Reals_+^V\)}.
\end{equation}

\subsection{Duality of Bounds and Graph Parameters}

Let \(G = (V,E)\) be a graph, and let
\(\beta_G \colon \Reals_+^V \to \Reals\) be an upper bound on the
weighted stability number, i.e., \(\alpha(G,w) \leq \beta_G(w)\) for
every \(w \colon V \to \Reals_+\).
Note that, if \(\beta_G\) is a monotone gauge, then it is positive
definite, since it is lower bounded by the positive definite monotone
gauge \(\alpha(G,\cdot)\); see~\cref{eq:alpha-chif-pdmgs}.
We proceed to prove that the dual of~\(\beta_G\) yields a lower bound
for \(\chi_f(G,\cdot)\), i.e.,
\(\beta_G^{\polar}(w) \leq \chi_f(G,w)\) for every
\(w \in \Reals_+^V\).
See~\cite{GvozdenovicL08a} for related work.

As a first step, notice that duality reverses inclusions and
inequalities, as usual:
\begin{lemma}
  \label{le:polar-gauge-reversal}
  Let \(\kappa\) and~\(\eta\) be positive definite monotone gauges
  on~\(\Reals_+^V\).
  Then \(\kappa(w) \leq \eta(w)\) for every \(w \in \Reals_+^V\) if
  and only if \(\eta^{\polar}(w) \leq \kappa^{\polar}(w)\) for every
  \(w \in \Reals_+^V\).
\end{lemma}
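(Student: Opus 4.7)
The plan is to translate the pointwise inequality between gauges into an inclusion of their unit convex corners, then invoke the order-reversing property of the antiblocking operation. The converse will follow formally from the biduality \(\kappa^{\polar\polar} = \kappa\) established in \cref{item:gauge-duality}.

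For the forward direction, suppose \(\kappa(w) \leq \eta(w)\) for every \(w \in \Reals_+^V\). Let \(\Ccal_\kappa\) and \(\Ccal_\eta\) denote the unit convex corners of \(\kappa\) and \(\eta\) respectively. The pointwise inequality immediately yields \(\Ccal_\eta \subseteq \Ccal_\kappa\), since \(\eta(w) \leq 1\) implies \(\kappa(w) \leq 1\). From here, two equivalent routes are available. The cleanest is to use the definition \cref{eq:def-polar-gauge}: for every \(z \in \Reals_+^V\),
\begin{equation*}
  \eta^{\polar}(z)
  =
  \max_{w \in \Ccal_\eta} \iprodt{w}{z}
  \leq
  \max_{w \in \Ccal_\kappa} \iprodt{w}{z}
  =
  \kappa^{\polar}(z),
\end{equation*}
because the second maximum is taken over a superset of the feasible region of the first (and both suprema are attained by compactness). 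Alternatively, one can observe that \(\Ccal_\eta \subseteq \Ccal_\kappa\) implies \(\abl(\Ccal_\kappa) \subseteq \abl(\Ccal_\eta)\) directly from the definition of the antiblocker, and then appeal to \cref{item:polar-gauge-closed} which identifies these as the unit convex corners of \(\kappa^{\polar}\) and \(\eta^{\polar}\).

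For the converse, suppose \(\eta^{\polar}(w) \leq \kappa^{\polar}(w)\) for every \(w \in \Reals_+^V\). Since \(\kappa^{\polar}\) and \(\eta^{\polar}\) are themselves positive definite monotone gauges by \cref{item:polar-gauge-closed}, the forward direction applied to them gives \(\kappa^{\polar\polar}(w) \leq \eta^{\polar\polar}(w)\) for every \(w \in \Reals_+^V\). By the biduality \(\kappa^{\polar\polar} = \kappa\) and \(\eta^{\polar\polar} = \eta\) from \cref{item:gauge-duality}, this is exactly \(\kappa(w) \leq \eta(w)\).

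There is no real obstacle here; the only point requiring mild care is ensuring that the ``\(\max\)'' in \cref{eq:def-polar-gauge} is indeed attained on both unit convex corners, which is guaranteed by the compactness half of \cref{item:gauge-corner}. Everything else is bookkeeping between the three equivalent pictures (gauges, their unit convex corners, and the antiblocking operation) already set up in the preceding pages.
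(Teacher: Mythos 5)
Your proof is correct and follows essentially the same route as the paper: the forward direction via inclusion of the unit convex corners (feasible regions in \cref{eq:def-polar-gauge}), and the converse by applying the forward direction to the dual gauges together with the biduality \(\kappa^{\polar\polar}=\kappa\) from \cref{item:gauge-duality}. The paper's proof is just a terser version of the same argument, so no further comment is needed.
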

\begin{proof}
  By \cref{item:gauge-duality}, it suffices to prove `only~if'.
  Suppose that \(\kappa(w) \leq \eta(w)\) for every
  \(w \in \Reals_+^V\).
  If \(w \in \Reals_+^V\) satisfies \(\eta(w) \leq 1\), then
  \(\kappa(w) \leq \eta(w) \leq 1\), so the feasible region of the
  optimization problem~\cref{eq:def-polar-gauge}
  defining~\(\kappa^{\polar}\) contains the feasible region
  defining~\(\eta^{\polar}\).
\end{proof}

Next we show that \(\alpha\) and \(\chi_f\) are duals, and similarly
that \(\STAB(G)\) and \(\QSTAB(\overline{G})\) are antiblockers of
each other.
This goes back to the work of Fulkerson; see
\cite[Sec.~9.3]{Schrijver86a}.
We begin with a simple observation:
\begin{equation}
  \label{eq:unit-convex-corner-alpha}
  \abl(\STAB(G))
  = \setst{
    x \in \Reals_+^V
  }{
    \iprodt{\incidvector{S}}{x} \leq 1,
    \forall S \in \Scal(G)
  }
  = \QSTAB(\overline G).
\end{equation}
\begin{theorem}
  \label{thm:alpha-chif-duality}
  Let \(G = (V,E)\) be a graph.
  Then the dual of the positive definite monotone gauge
  \(\alpha(G,\cdot)\) is \(\chi_f(G,\cdot)\), and the dual of the
  positive definite monotone gauge \(\chi_f(G,\cdot)\) is
  \(\alpha(G,\cdot)\).
  Moreover, the antiblocker of~\(\STAB(G)\)
  is~\(\QSTAB(\overline{G})\), and the antiblocker of
  \(\QSTAB(\overline{G})\) is \(\STAB(G)\).
\end{theorem}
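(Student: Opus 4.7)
The plan is to establish the gauge-duality \(\alpha(G,\cdot)^{\polar} = \chi_f(G,\cdot)\) by a direct unfolding of the definition of polar gauge, and then to deduce the antiblocker identities. Unfolding~\cref{eq:def-polar-gauge} and using that \(\alpha(G,z) \leq 1\) on \(z \in \Reals_+^V\) is equivalent to \(\iprodt{\incidvector{S}}{z} \leq 1\) for every \(S \in \Scal(G)\), i.e., to \(z \in \QSTAB(\overline{G})\) by~\cref{eq:unit-convex-corner-alpha}, for any \(w \in \Reals_+^V\) I obtain
\[
  \alpha(G,\cdot)^{\polar}(w)
  = \max\setst{\iprodt{z}{w}}{z \in \Reals_+^V,\,\alpha(G,z) \leq 1}
  = \max_{z \in \QSTAB(\overline{G})} \iprodt{z}{w}
  = \chi_f(G,w),
\]
where the last equality is~\cref{eq:chif-over-QSTAB}. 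The reverse identity \(\chi_f(G,\cdot)^{\polar} = \alpha(G,\cdot)\) then follows at once from the involution \(\kappa^{\polar\polar} = \kappa\) of~\cref{item:gauge-duality}.

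For the antiblocker statements, \(\abl(\STAB(G)) = \QSTAB(\overline{G})\) is already displayed in~\cref{eq:unit-convex-corner-alpha}. For the reverse identity \(\abl(\QSTAB(\overline{G})) = \STAB(G)\), which is not immediate, I would verify that \(\STAB(G)\) itself is a convex corner and then invoke \(\abl(\abl(\Ccal)) = \Ccal\) from~\cref{eq:abl-abl} to conclude \(\abl(\QSTAB(\overline{G})) = \abl(\abl(\STAB(G))) = \STAB(G)\). Among the defining properties of a convex corner, compactness, convexity, containment in \(\Reals_+^V\), and nonempty interior (witnessed for instance by \((1/n)\ones \in \STAB(G)\)) are all immediate; lower-comprehensiveness is the nontrivial one, and is what I expect to be the main technical step.

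To show that \(\STAB(G)\) is lower-comprehensive, fix \(y = \sum_{S \in \Scal(G)} \lambda_S \incidvector{S} \in \STAB(G)\) (with \(\lambda \geq 0\) and \(\sum_S \lambda_S = 1\)) and \(x \in \Reals_+^V\) with \(x \leq y\). Set \(p_v \coloneqq x_v/y_v\) whenever \(y_v > 0\) and \(p_v \coloneqq 0\) otherwise; since \(x_v \leq y_v\), we have \(p_v \in [0,1]\), and \(p_v = 0\) forces \(x_v = 0\). Now sample a random set by drawing \(S\) with probability \(\lambda_S\) and then independently retaining each \(v \in S\) with probability \(p_v\); the resulting set \(T \subseteq S\) is stable, and its expected incidence vector is \(\sum_{v \in V} \bigl(\sum_{S \ni v} \lambda_S\bigr) p_v \incidvector{\set{v}} = \sum_{v \in V} y_v p_v \incidvector{\set{v}} = x\). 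Thus \(x\) is a convex combination of incidence vectors of stable sets, so \(x \in \STAB(G)\).
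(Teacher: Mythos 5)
Your proposal is correct and follows essentially the same route as the paper: unfold the polar gauge of \(\alpha(G,\cdot)\) using \cref{eq:unit-convex-corner-alpha} and \cref{eq:chif-over-QSTAB}, get the reverse identity from the involution in \cref{item:gauge-duality}, and deduce \(\abl(\QSTAB(\overline{G})) = \STAB(G)\) from \cref{eq:abl-abl}. The only difference is that you explicitly verify that \(\STAB(G)\) is a convex corner—in particular lower-comprehensive, via your random-thinning argument, which is valid—whereas the paper invokes \cref{eq:abl-abl} without spelling out this hypothesis.
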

\begin{proof}
  Denote \(\alpha_G \colon w \in \Reals_+^V \mapsto \alpha(G,w)\).
  Then, by~\cref{eq:unit-convex-corner-alpha}, for every
  \(z \in \Reals_+^V\),
  \begin{equation*}
    \begin{split}
      \alpha_G^{\polar}(z)
      & =
      \max\setst{
        \iprodt{w}{z}
      }{
        w \in \Reals_+^V,\,
        \alpha_G(w) \leq 1
      }
      \\
      & =
      \max\setst{
        \iprodt{w}{z}
      }{
        w \in \Reals_+^V,\,\iprodt{\incidvector{S}}{w} \le 1,
        \forall S \in \Scal(G)
      }
      \\
      & =
      \max\setst{
        \iprodt{w}{z}
      }{
        w \in \QSTAB(\overline{G})
      }
      =
      \chi_f(G,z).
    \end{split}
  \end{equation*}
  By~\cref{eq:alpha-chif-pdmgs,item:gauge-duality}, we get
  \(\chi_f(G,\cdot)^{\polar} = \alpha_G^{\polar\polar} = \alpha_G\).

  By \cref{eq:unit-convex-corner-alpha},
  \(\abl(\STAB(G)) = \QSTAB(\overline{G})\).
  Hence,
  \(\abl(\QSTAB(\overline{G})) = \abl(\abl(\STAB(G))) = \STAB(G)\)
  by~\cref{eq:abl-abl}.
\end{proof}

Since \(\norm[1]{\cdot}\) and \(\norm[\infty]{\cdot}\) are dual
sign-invariant norms, their restrictions to the nonnegative
orthant~\(\Reals_+^V\) are positive definite monotone gauges which are
dual to each other, by \cref{item:gauge-from-snorm}.
Trivially, for every graph \(G = (V,E)\) we have both that
\(\alpha(G,w) \leq \norm[1]{w}\) and
\(\norm[\infty]{z} \leq \chi_f(G,z)\) for every
\(w,z \in \Reals_+^V\).
\Cref{le:polar-gauge-reversal,thm:alpha-chif-duality} imply that both
inequalities are equivalent by duality.
Furthermore, as the next theorem alludes to, our work will focus only
on bounds which are at least as tight as the ones just mentioned.

\begin{theorem}
  \label{thm:bound-conversion}
  Let \(G = (V,E)\) be a graph.
  Let \(\beta_G \colon \Reals_+^V \to \Reals\) be a positive definite
  monotone gauge.
  Then:
  \begin{eqenum}[list:bound-duality]
  \item\label{item:polar-bound-closed} \(\beta_G^{\polar}\) is a
    positive definite monotone gauge;
  \item\label{item:bound-duality} \(\beta_G^{\polar\polar} = \beta_G\);
  \item\label{item:alpha-to-chi} if
    \(\alpha(G,w) \leq \beta_G(w) \leq \norm[1]{w}\) for every
    \(w \in \Reals_+^V\), then
    \(\norm[\infty]{z} \leq \beta_G^{\polar}(z) \leq \chi_f(G,z)\) for
    every \(z \in \Reals_+^V\);
  \item\label{item:chi-to-alpha} if
    \(\norm[\infty]{z} \leq \beta_G(z) \leq \chi_f(G,z)\) for every
    \(z \in \Reals_+^V\), then
    \(\alpha(G,w) \leq \beta_G^{\polar}(w) \leq \norm[1]{w}\) for
    every \(w \in \Reals_+^V\);
  \item\label{item:bound-cauchy} \(\iprodt{w}{z} \leq \beta_G^{}(w)\beta_G^{\polar}(z)\) for
    every \(w,z \in \Reals_+^V\).
  \end{eqenum}
\end{theorem}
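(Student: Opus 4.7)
The approach is to read off \cref{item:polar-bound-closed,item:bound-duality,item:bound-cauchy} directly from \cref{thm:gauge-duality} applied to~\(\beta_G\), and to deduce \cref{item:alpha-to-chi,item:chi-to-alpha} by applying \cref{le:polar-gauge-reversal} twice, using the two pairs of dual gauges \((\alpha(G,\cdot),\chi_f(G,\cdot))\) and \((\norm[1]{\cdot},\norm[\infty]{\cdot})\) already identified in the text.

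First I would dispatch \cref{item:polar-bound-closed} by invoking \cref{item:polar-gauge-closed}, which asserts exactly that the polar of a positive definite monotone gauge is again a positive definite monotone gauge (with the corresponding antiblocker as its unit convex corner). Likewise \cref{item:bound-duality} is \cref{item:gauge-duality} verbatim, and \cref{item:bound-cauchy} is \cref{item:gauge-cauchy}. So the first three items require no new argument beyond citing \cref{thm:gauge-duality}.

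For \cref{item:alpha-to-chi}, the hypothesis is the pair of pointwise inequalities \(\alpha(G,\cdot) \leq \beta_G \leq \norm[1]{\cdot}\) on~\(\Reals_+^V\). Applying \cref{le:polar-gauge-reversal} to each inequality (all three functions being positive definite monotone gauges on~\(\Reals_+^V\) by \cref{eq:alpha-chif-pdmgs} and by the fact, noted in the paragraph immediately preceding the theorem, that the restriction of~\(\norm[1]{\cdot}\) to~\(\Reals_+^V\) is a positive definite monotone gauge) yields \((\norm[1]{\cdot})^{\polar} \leq \beta_G^{\polar} \leq \alpha(G,\cdot)^{\polar}\) on~\(\Reals_+^V\). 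By \cref{thm:alpha-chif-duality}, \(\alpha(G,\cdot)^{\polar} = \chi_f(G,\cdot)\); and \((\norm[1]{\cdot})^{\polar} = \norm[\infty]{\cdot}\) on~\(\Reals_+^V\) since \(\norm[1]{\cdot}\) and \(\norm[\infty]{\cdot}\) are dual sign-invariant norms, hence dual gauges upon restriction (by \cref{item:gauge-from-snorm} and \cref{eq:dual-norm-and-gauge-commute}). Substituting these two identifications gives the conclusion. Item \cref{item:chi-to-alpha} is completely symmetric: apply \cref{le:polar-gauge-reversal} to the hypothesis \(\norm[\infty]{\cdot} \leq \beta_G \leq \chi_f(G,\cdot)\), then use \(\chi_f(G,\cdot)^{\polar} = \alpha(G,\cdot)\) (the other half of \cref{thm:alpha-chif-duality}) and \((\norm[\infty]{\cdot})^{\polar} = \norm[1]{\cdot}\) on~\(\Reals_+^V\).

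The only mildly subtle step is the identification of \((\norm[1]{\cdot})^{\polar}\) with~\(\norm[\infty]{\cdot}\) on the nonnegative orthant, but this is explicitly flagged in the paragraph before the theorem and follows from \cref{eq:dual-norm-and-gauge-commute}, so no real obstacle arises. Everything else is bookkeeping: verifying that the functions to which \cref{le:polar-gauge-reversal} is applied are indeed positive definite monotone gauges, which has already been established for \(\alpha(G,\cdot)\), \(\chi_f(G,\cdot)\), the relevant \(p\)-norms, and (by hypothesis) \(\beta_G\).
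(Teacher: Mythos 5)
Your proposal is correct and follows essentially the same route as the paper: items \cref{item:polar-bound-closed,item:bound-duality,item:bound-cauchy} are read off from \cref{thm:gauge-duality}, and \cref{item:alpha-to-chi,item:chi-to-alpha} are obtained by applying \cref{le:polar-gauge-reversal} to the sandwiching inequalities and then identifying \(\alpha(G,\cdot)^{\polar} = \chi_f(G,\cdot)\) via \cref{thm:alpha-chif-duality} and the restricted \(1\)-norm/\(\infty\)-norm pair as dual gauges, exactly as the paper does (it flags the latter identification in the paragraph preceding the theorem).
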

\begin{proof}
  \Cref{item:polar-bound-closed}:
  Immediate from \cref{item:polar-gauge-closed}.

  \Cref{item:bound-duality}:
  Immediate from \cref{item:gauge-duality}.

  \Cref{item:alpha-to-chi}:
  Note that \(\alpha(G,\cdot)\), \(\beta_G\), and (the restriction to
  \(\Reals_+^V\) of) \(\norm[1]{\cdot}\) are positive definite
  monotone gauges.
  Apply \cref{le:polar-gauge-reversal} to get
  \(\norm[\infty]{z} \leq \beta^{\polar}(G,z) \leq
  \alpha^{\polar}(G,z)\) for every \(z \in \Reals_+^V\).
  The rightmost term is \(\chi_f(G,z)\) by
  \cref{thm:alpha-chif-duality}.

  \Cref{item:chi-to-alpha}:
  Symmetric to the proof of~\cref{item:alpha-to-chi}.

  \Cref{item:bound-cauchy}:
  Immediate from \cref{item:gauge-cauchy}.
\end{proof}

\Cref{thm:bound-conversion} treats weighted bounds on~\(\alpha\)
and~\(\chi_f\) that are not necessarily graph parameters, in the sense
that the bounds might depend on vertex or edge labels.
We will see examples in \cref{sec:dual-Hoffman-bound}.
Let us become more precise about (weighted) graph parameters.
Let \(G = (V,E)\) be a graph, and let \(\sigma\) be a bijection
with domain~\(V\).
Denote by \(\sigma G\) the graph on vertex set \(\sigma(V)\) with
edges \(\setst{\sigma(i)\sigma(j)}{ij \in E}\).
If \(w \in \Reals_+^V\), denote \(\sigma w \in \Reals_+^{\sigma(V)}\)
defined by \(\paren{\sigma w}_{\sigma(i)} \coloneqq w_i\) for every
\(i \in V\).

Let \(\beta\) be a function that assigns a real number to each pair
\((G,w)\), where \(G = (V,E)\) is a graph and \(w \in \Reals_+^V\).
(We will not go into details about set-theoretic issues, e.g., we do
not discuss the \emph{class} of all graphs.)
We say that \(\beta\) is a \emph{(weighted) graph parameter} if,
whenever \(G = (V,E)\) is a graph and \(\sigma\) is a bijection with
domain~\(V\), we have \(\beta(\sigma G, \sigma w) = \beta(G,w)\) for
every \(w \in \Reals_+^V\).
That~is, graph parameters depend only on the isomorphism class of the
input graph.

Below and throughout the rest of the paper, we will deal with weighted
graph parameters \(\beta\) as in the previous paragraph.
For every graph \(G = (V,E)\),
\begin{equation}
  \label{eq:sub-G-abbrev}
  \text{%
    \emph{we may abbreviate
      \(\beta_G(\cdot) \coloneqq \beta(G,\cdot)\) without further
      mention}.
  }
\end{equation}
\begin{theorem}
  \label{thm:param-conversion}
  Let \(\beta\) be a weighted graph parameter such that \(\beta_G\) is
  a positive definite monotone gauge for every graph~\(G\).
  Define \(\beta^{\polar}\) by setting
  \(\beta^{\polar}(G,w) \coloneqq \beta_G^{\polar}(w)\) for every
  \(w \in \Reals_+^V\).
  Then:
  \begin{eqenum}[list:param-duality]
  \item\label{item:polar-param-closed} \(\beta^{\polar}\) is a
    weighted graph parameter;
  \item\label{item:param-duality} \(\beta^{\polar\polar} = \beta\);
  \item\label{item:param-cauchy}
    \(\iprodt{w}{z} \leq \beta_G^{}(w)\beta_G^{\polar}(z)\) for every
    \(w,z \in \Reals_+^V\), with equality whenever \(G\) is
    vertex-transitive and \(w = z = \ones\).
  \end{eqenum}
\end{theorem}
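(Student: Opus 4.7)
The first two parts and the inequality in \cref{item:param-cauchy} will follow essentially formally from \cref{thm:gauge-duality,thm:bound-conversion}; the substantive content is the equality clause for vertex-transitive graphs, and I expect that to be the main obstacle.

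My plan for \cref{item:polar-param-closed} is to unfold the definition of \(\beta^{\polar}(\sigma G,\sigma z)\) as a max and change variables from \(w' \in \Reals_+^{\sigma(V)}\) to \(w \in \Reals_+^V\) via \(w' = \sigma w\). Combined with \(\iprodt{\sigma w}{\sigma z} = \iprodt{w}{z}\) and the graph-parameter property \(\beta(\sigma G,\sigma w) = \beta(G,w)\), this should collapse the optimization problem to the one defining \(\beta^{\polar}(G,z)\). \Cref{item:param-duality} is a graph-by-graph application of \cref{item:gauge-duality}, and the inequality half of \cref{item:param-cauchy} is just \cref{item:bound-cauchy}.

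For the equality in the vertex-transitive case with \(w = z = \ones\), the key idea is to symmetrize any maximizer of
\begin{equation*}
  \beta_G^{\polar}(\ones) = \max\setst[\big]{\iprodt{w}{\ones}}{w \in \Reals_+^V,\,\beta_G(w) \leq 1};
\end{equation*}
such a maximizer \(w^*\) exists because the feasible region is compact by \cref{item:gauge-corner}. Writing \(\Gamma\) for the automorphism group of~\(G\), for each \(\sigma \in \Gamma\) we have \(\sigma G = G\), so the parameter property gives \(\beta_G(\sigma w^*) = \beta(\sigma G,\sigma w^*) = \beta(G,w^*) \leq 1\), while \(\iprodt{\sigma w^*}{\ones} = \iprodt{w^*}{\ones}\) since \(\sigma\) fixes~\(\ones\); hence each \(\sigma w^*\) is also optimal. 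Averaging over \(\Gamma\) then yields a \(\Gamma\)-invariant optimizer \(\bar w \coloneqq \abs{\Gamma}^{-1}\sum_{\sigma \in \Gamma} \sigma w^*\), and by transitivity of \(\Gamma\) on~\(V\) the vector \(\bar w\) must be constant, say \(\bar w = c\ones\). Finally, positive homogeneity of~\(\beta_G\) together with optimality forces \(\beta_G(\bar w) = 1\) (otherwise rescaling by \(1/\beta_G(\bar w) > 1\) would improve the objective), yielding \(c = 1/\beta_G(\ones)\) and therefore \(\beta_G^{\polar}(\ones) = \iprodt{\bar w}{\ones} = cn = n/\beta_G(\ones)\).

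The one delicate spot in the plan is ensuring that the \(\Gamma\)-average stays inside the feasible region \emph{without} degrading the objective; this is where convexity of the unit convex corner of \(\beta_G\), \(\Gamma\)-invariance of~\(\ones\), and the identity \(\sigma G = G\) for \(\sigma \in \Gamma\) must conspire simultaneously. Everything else amounts to bookkeeping on the definitions.
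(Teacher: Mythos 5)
Your proposal is correct, and for \cref{item:polar-param-closed,item:param-duality} and the inequality in \cref{item:param-cauchy} it coincides with the paper's argument (change of variables under \(\sigma\), graph-by-graph application of \cref{item:gauge-duality}, and \cref{item:bound-cauchy}). For the equality clause your key idea --- averaging an optimizer of \(\beta_G^{\polar}(\ones)\) over \(\Aut(G)\) to obtain a constant optimizer \(c\ones\) --- is the same Reynolds-operator symmetrization the paper uses, but your endgame is genuinely more direct: you invoke positive homogeneity (and positive definiteness) to force \(\beta_G(c\ones)=1\), hence \(c=1/\beta_G(\ones)\) and the exact identity \(\beta_G^{\polar}(\ones)=n/\beta_G(\ones)\), which gives equality outright. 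The paper instead symmetrizes \emph{both} optimization problems: using \(\beta=\beta^{\polar\polar}\) and the fact that \(\beta^{\polar}\) is again a graph parameter, it represents \(\beta_G(\ones)\) as a maximum over \(\abl(\Ccal_{\beta,G})\), obtains constant optimizers \(\mu\ones\) and \(\eta\ones\) for the two problems, deduces \(\mu\eta\le n^{-1}\) from polarity, and then combines the resulting upper bound \(\beta_G(\ones)\beta_G^{\polar}(\ones)\le n\) with the Cauchy-type lower bound of \cref{item:bound-cauchy}. Your route avoids the second symmetrization and the appeal to the inequality half, and yields the slightly stronger pointwise statement \(\beta_G^{\polar}(\ones)=n/\beta_G(\ones)\). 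The only point you should make explicit is nondegeneracy in the rescaling step: you need \(\bar w\neq 0\) so that \(\beta_G(\bar w)>0\) and the objective \(\iprodt{\bar w}{\ones}\) is strictly positive (otherwise ``rescaling improves the objective'' is vacuous); this is immediate because \(\beta_G^{\polar}\) is positive definite by \cref{item:polar-bound-closed} (or because \(\ones/\beta_G(\ones)\) is feasible with positive objective value), so the gap is trivial to fill.
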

\begin{proof}
  \Cref{item:polar-param-closed}:
  Let \(G = (V,E)\) be a graph, and let \(\sigma \colon V \to U\) be a
  bijection.
  Let \(z \in \Reals_+^V\).
  Then
  \begin{equation*}
    \begin{split}
      \beta^{\polar}(\sigma G, \sigma z)
      =
      \beta_{\sigma G}^{\polar}(\sigma z)
      & =
      \max\setst{
        \iprodt{y}{(\sigma z)}
      }{
        y \in \Reals_+^U,\, \beta_{\sigma G}(y) \leq 1
      }
      \\
      & =
      \max\setst{
        \iprodt{(\sigma w)}{(\sigma z)}
      }{
        w \in \Reals_+^V,\, \beta_{\sigma G}(\sigma w) \leq 1
      }
      \\
      & =
      \max\setst{
        \iprodt{w}{z}
      }{
        w \in \Reals_+^V,\, \beta_G(w) \leq 1
      }
      =
      \beta_G^{\polar}(z)
      =
      \beta^{\polar}(G,z).
    \end{split}
  \end{equation*}

  \Cref{item:param-duality}:
  Immediate from
  \cref{item:bound-duality}.

  \Cref{item:param-cauchy}:
  The inequality is immediate from
  \cref{item:bound-cauchy}.
  Suppose \(G = (V,E)\) is vertex-transitive, and let \(\Aut(G)\)
  denote the automorphism group of~\(G\).
  Let
  \(\Ccal_{\beta,G} = \setst{x \in \Reals_+^V}{\beta_G(x) \leq 1}\) be
  the unit convex corner of~\(\beta_G\).
  Then
  \begin{equation}
    \label{eq:invariant-unit-convex-corner}
    \sigma \Ccal_{\beta,G}
    \coloneqq
    \setst{
      \sigma x
    }{
      x \in \Ccal_{\beta, G}
    }
    =
    \Ccal_{\beta,G},
    \qquad
    \forall \sigma \in \Aut(G).
  \end{equation}
  Indeed, if \(x \in \Ccal_{\beta,G}\) and \(\sigma \in \Aut(G)\),
  then
  \(\beta(G,\sigma x) = \beta(\sigma G, \sigma x) = \beta(G,x) \leq
  1\).
  This proves~`\(\subseteq\)'
  in~\cref{eq:invariant-unit-convex-corner}; for the reverse
  inclusion, apply the previous inclusion with \(\sigma^{-1}\) in
  place of~\(\sigma\).
  Let \(\bar{y}\) attain the maximum in the definition of
  \(\beta_G^{\polar}(\ones) = \max\setst{\iprodt{\ones}{y}}{y \in
    \Ccal_{\beta,G}}\).
  By \cref{eq:invariant-unit-convex-corner} and
  \cref{item:gauge-corner}, we apply the usual Reynolds operator to
  get that the point
  \begin{equation*}
    \tilde{y}
    \coloneqq
    \frac{1}{\card{\Aut(G)}}
    \sum_{\sigma \in \Aut(G)} \sigma \bar{y}
  \end{equation*}
  also attains the maximum, and it is constant on the orbits of the
  action from~\(\Aut(G)\) on~\(V\).
  Since~\(G\) is vertex-transitive, it follows that \(\tilde{y}\) is a
  scalar multiple of~\(\ones\).
  Say, \(\tilde{y} = \mu \ones \in \Ccal_{\beta,G}\) for some
  \(\mu \in \Reals_+\), so that \(\beta_G^{\polar}(\ones) = \mu n\).

  Analogously, by~\cref{item:polar-param-closed,item:param-duality},
  the \(\max\) in the definition of~\(\beta_G(\ones)\) is attained by
  a scalar multiple \(\eta \ones\) of~\(\ones\), for some
  \(\eta \in \Reals_+\), and \(\beta_G(\ones) = \eta n\).
  By \cref{item:polar-gauge-closed},
  the unit convex corner of~\(\beta_G^{\polar}\) is
  \(\abl(\Ccal_{\beta,G})\), so
  \(\eta \ones \in \abl(\Ccal_{\beta,G})\) whence
  \(\iprodt{(\mu\ones)}{(\eta\ones)} \leq 1\) so
  \(\mu\eta \leq n^{-1}\).
  Hence,
  \(\beta_G(\ones)\beta_G^{\polar}(\ones) = \mu\eta n^2 \leq n\), as
  desired.
\end{proof}

\section{The Dual of Hoffman's Lower Bound for the Chromatic Number}
\label{sec:dual-Hoffman-bound}

This section addresses the question from the introduction on a precise
duality relation between the Delsarte-Hoffman ratio
bound~\cref{eq:ratio-bound} and the Hoffman
bound~\cref{eq:Hoffman-bound}, using the duality theory of positive
definite monotone gauges from \cref{sec:gauge-duality}.
As described in that section, we first need to introduce a weighted
version of the lower bound~\cref{eq:Hoffman-bound}.

Let \(\Sym{V}\) be the set of symmetric \(V \times V\) matrices.
Let \(\Psd{V} \subseteq \Sym{V}\) denote the set of positive
semidefinite matrices, i.e., matrices whose eigenvalues are
nonnegative.
For matrices \(X,Y \in \Sym{V}\), write \(X \succeq Y\) if
\(X-Y \in \Psd{V}\).
Recall that each positive semidefinite matrix~\(W\) has a unique
positive semidefinite square root, denoted by \(W^{\half}\).
The linear map \(\Diag \colon \Reals^V \to \Sym{V}\) builds diagonal
matrices, that~is, if \(x \in \Reals^V\), then
\(\Diag(x) \in \Sym{V}\) is a diagonal matrix with
\([\Diag(x)]_{ii} = x_i\) for every \(i \in V\).
The linear map \(\diag \colon \Sym{V} \to \Reals^V\) extracts the
diagonal of a (symmetric) matrix.

It turns out that the proof of (a strengthening
of)~\cref{eq:Hoffman-bound} works even when the adjacency
matrix~\(A_G\) of our graph~\(G = (V,E)\) is replaced with any
\emph{generalized adjacency matrix} of~\(G\), i.e., any matrix in
\begin{equation*}
  \Acal_G
  \coloneqq
  \setst{
    A \in \Sym{V}
  }{
    A_{ij} = 0 \text{ if } ij \notin E
  }.
\end{equation*}
That~is, the proof of~\cref{eq:Hoffman-bound} relies only on the fact
that the matrix has zero diagonal and zeroes on the off-diagonal
entries corresponding to non-edges.
For any nonzero matrix \(A \in \Sym{V}\) with zero diagonal, denote
\begin{equation}
  \label{eq:At-def}
  \At \coloneqq \frac{A}{-\lambda_{\min}(A)},
\end{equation}
and if \(A = 0\), define \(\At \coloneqq A\).
Note that \(\At \succeq -I\), so
\begin{equation}
  \label{eq:I+At-key-properties}
  \text{
    \(I+\At \succeq 0\),
    and
    \(\Null(I+\At)\)
    is the \(\lambda_{\min}(A)\)-eigenspace of~\(A\) if \(A \neq 0\).
  }
\end{equation}

We can now define the weighted Hoffman bound: for every
\(A \in \Sym{V}\) with zero diagonal and every \(w \in \Reals_+^V\),
define
\begin{equation}
  \label{eq:def-Hoffman-A}
  \Hoffman(A,w)
  \coloneqq
  \lambda_{\max}\paren[\Big]{
    W^{\half}(I + \At)W^{\half}
  }
  \qquad
  \text{where }
  W \coloneqq \Diag(w),
\end{equation}
and for a graph \(G = (V,E)\), define
\begin{equation*}
  \Hoffman(G,w)
  \coloneqq
  \Hoffman(A_G,w).
\end{equation*}
Unless \(G\) has no edges,
\begin{equation*}
  \Hoffman(G,\ones)
  =
  \lambda_{\max}\paren[\Big]{I + \frac{A_G}{-\lambda_{\min}(A_G)}}
  =
  1 - \frac{\lambda_{\max}(A_G)}{\lambda_{\min}(A_G)},
\end{equation*}
which is precisely the quantity in the RHS of \cref{eq:Hoffman-bound}.
As in \cref{eq:sub-G-abbrev}, for the function
\(\beta \coloneqq \Hoffman\) and later for other functions~\(\beta\)
of a matrix \(A \in \Sym{V}\) and weights \(w \in \Reals_+^E\),
\begin{equation}
  \text{%
    \emph{we may abbreviate
      \(\beta_A(\cdot) \coloneqq \beta(A,\cdot)\) without further
      mention}.
  }
\end{equation}

Let us verify that the bound~\cref{eq:def-Hoffman-A} fits into
our framework from \cref{sec:gauge-duality}:
\begin{theorem}
  \label{thm:Hoffman-pdmg-bound}
  Let \(G = (V,E)\) be a graph.
  For every \(A \in \Acal_G\), the function
  \(\Hoffman_A \colon \Reals_+^V \to \Reals\) is a positive definite
  monotone gauge and, for every \(w \in \Reals_+^V\),
  \begin{equation}
    \label{eq:Hoffman-bounds-chif}
    \norm[\infty]{w} \leq \Hoffman(A,w) \leq \chi_f(G,w).
  \end{equation}
\end{theorem}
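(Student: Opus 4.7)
The plan is to rewrite $\Hoffman(A,\cdot)$ as the composition of a linear map into $\Psd{V}$ with the largest-eigenvalue function, from which the gauge axioms follow from standard facts about the operator norm. Set $Q \coloneqq (I+\At)^{\half} \succeq 0$, which is well-defined by~\cref{eq:I+At-key-properties}. A cyclic trick (equivalently, $B^{\top}B$ and $BB^{\top}$ share their nonzero eigenvalues, applied with $B \coloneqq QW^{\half}$) yields
\[
  \Hoffman(A,w)
  = \lambda_{\max}\paren{W^{\half}(I+\At)W^{\half}}
  = \lambda_{\max}\paren{Q\Diag(w)Q}.
\]
The matrix on the right is positive semidefinite and depends linearly on $w \in \Reals_+^V$; since $\lambda_{\max}$ coincides with the operator norm on~$\Psd{V}$ and is monotone with respect to the L\"owner order, this instantly delivers positive semidefiniteness, positive homogeneity, sublinearity, and monotonicity of $\Hoffman_A$.

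For positive definiteness together with the left inequality in~\cref{eq:Hoffman-bounds-chif}, I revert to the original form of the matrix. Because $A$ — and hence $\At$ — has zero diagonal, the diagonal of $W^{\half}(I+\At)W^{\half}$ is exactly~$w$. For any symmetric $M$ and any $i \in V$, the Rayleigh-quotient bound $M_{ii} \le \lambda_{\max}(M)$ gives $\Hoffman(A,w) \ge w_i$ for every $i$, whence $\Hoffman(A,w) \ge \norm[\infty]{w}$; in particular $\Hoffman(A,w) > 0$ whenever $w \ne 0$.

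For the right inequality, first verify that $\Hoffman(A,\incidvector{S}) \le 1$ for every $S \in \Scal(G)$: stability of $S$ together with $A \in \Acal_G$ forces $A_{ij} = 0$ for all $i,j \in S$, so $W^{\half}(I+\At)W^{\half} = \Diag(\incidvector{S})$, whose largest eigenvalue is $1$ (or $0$ if $S = \emptyset$). Now take any feasible $y \in \Reals_+^{\Scal(G)}$ for~\cref{eq:def-chif}, so that $\sum_S y_S \incidvector{S} \ge w$, and chain the gauge properties already established:
\[
  \Hoffman(A,w)
  \le \Hoffman\paren[\Big]{A,\,\sum_{S} y_S \incidvector{S}}
  \le \sum_{S} y_S \Hoffman(A,\incidvector{S})
  \le \sum_{S} y_S;
\]
taking the infimum over $y$ yields $\Hoffman(A,w) \le \chi_f(G,w)$. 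The only step with any substance is the linearization $\lambda_{\max}(W^{\half}(I+\At)W^{\half}) = \lambda_{\max}(Q\Diag(w)Q)$; without it, sublinearity would be awkward to verify directly, since $W^{\half}$ is not linear in~$w$. After that observation, everything reduces to bookkeeping with standard spectral facts.
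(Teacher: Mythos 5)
Your proof is correct, and while the first half follows the paper's route, the second half takes a genuinely different path. For the gauge properties and the lower bound \(\norm[\infty]{w} \leq \Hoffman(A,w)\), you do essentially what the paper does: pass to \(\lambda_{\max}\paren[\big]{(I+\At)^{\half}\Diag(w)(I+\At)^{\half}}\) via the eigenvalue-swap lemma (\cref{lem:eigen-swap}, i.e.\ \cref{cor:Hoffman-swapped}), so that sublinearity and monotonicity come from standard properties of \(\lambda_{\max}\) on the Löwner order, and positive definiteness from comparing diagonal entries with the largest eigenvalue. For the inequality \(\Hoffman(A,w) \leq \chi_f(G,w)\), however, the paper argues directly: after reducing to \(w>0\), it takes a top eigenvector \(b\) of \(W^{\half}(I+\At)W^{\half}\), builds the matrix \(X = W^{-\half}\Diag(b)(I+\At)\Diag(b)W^{-\half}\), shows that the vector \(2Xw - \Lambda W^{-1}\Diag(b)b\) satisfies every stable-set inequality, and pairs it against an optimal fractional coloring with \(\sum_S y_S\incidvector{S} = w\). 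You instead observe that \(\Hoffman(A,\incidvector{S}) \leq 1\) for every \(S \in \Scal(G)\) (a one-line computation, since stability forces \(\Diag(\incidvector{S})(I+\At)\Diag(\incidvector{S}) = \Diag(\incidvector{S})\)) and then chain monotonicity, sublinearity, and positive homogeneity across any feasible fractional cover \(\sum_S y_S\incidvector{S} \geq w\). Your route is shorter, avoids the reduction to positive weights and the explicit eigenvector manipulation, and is close in spirit to the alternative the paper only sketches later (deriving \cref{eq:Hoffman-bounds-chif} from \(\STAB(G)\subseteq\Ucal_A\) via gauge duality), except that you need no duality at all — just the general principle that a monotone, sublinear, positively homogeneous function bounded by \(1\) on stable-set incidence vectors is dominated by \(\chi_f(G,\cdot)\). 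What the paper's computation buys in exchange is an explicit certificate (a vector satisfying the stable-set inequalities whose inner product with \(w\) equals \(\Hoffman(A,w)\)), which foreshadows the dual objects used in the rest of the paper; your argument is the cleaner way to get just the stated theorem.
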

The fact~\cref{eq:Hoffman-bounds-chif} follows from known relations
between \(\Hoffman(A,\cdot)\) and the Lovász theta function (see,
e.g., \cite[Theorem~6]{Lovasz79a} or \cite[Theorem~33]{Knuth94a}), and
a weighted version of~\cref{eq:sandwich-long}; we include below a
proof that bypasses the theta function.
We will use the following well-known fact in the proof of
\cref{thm:Hoffman-pdmg-bound} and elsewhere, extensively.
\begin{lemma}[{see \cite[Theorem 1.3.20]{HornJ90a}}]
  \label{lem:eigen-swap}
  Let \(U,V\) be finite sets, and let \(A \in \Reals^{U\times V}\) and
  \(B \in \Reals^{V \times U}\).
  Then \(AB\) and \(BA\) have the same nonzero eigenvalues (taking
  multiplicities into account).
\end{lemma}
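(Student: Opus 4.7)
The plan is to establish the polynomial identity
\[
t^{\card{V}} \det(t I_U - AB) = t^{\card{U}} \det(t I_V - BA),
\]
which immediately implies the lemma: the extra factors $t^{\card{V}}$ and $t^{\card{U}}$ contribute only to the multiplicity of the eigenvalue~$0$, leaving the nonzero roots (with their multiplicities) identical on both sides.

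To prove this identity, I would exhibit a similarity between the two block matrices of size $(\card{U}+\card{V}) \times (\card{U}+\card{V})$
\[
M_1 \coloneqq \begin{pmatrix} AB & 0 \\ B & 0 \end{pmatrix}
\qquad\text{and}\qquad
M_2 \coloneqq \begin{pmatrix} 0 & 0 \\ B & BA \end{pmatrix}.
\]
Let
\[
P \coloneqq \begin{pmatrix} I_U & A \\ 0 & I_V \end{pmatrix},
\qquad
P^{-1} = \begin{pmatrix} I_U & -A \\ 0 & I_V \end{pmatrix}.
\]
A routine block-matrix multiplication yields $P M_2 P^{-1} = M_1$, so $M_1$ and $M_2$ are similar and therefore share the same characteristic polynomial.

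Now $tI - M_1$ is block upper-triangular with diagonal blocks $tI_U - AB$ and $tI_V$, so $\det(tI - M_1) = t^{\card{V}} \det(tI_U - AB)$ by the standard block-triangular determinant formula. Analogously, $tI - M_2$ is block lower-triangular with diagonal blocks $tI_U$ and $tI_V - BA$, yielding $\det(tI - M_2) = t^{\card{U}} \det(tI_V - BA)$. Equating the two expressions gives the desired polynomial identity.

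I do not foresee any serious obstacle; the entire argument reduces to verifying the single block multiplication $P M_2 P^{-1} = M_1$ and invoking the block-triangular determinant formula. Both steps are mechanical and apply uniformly whether $A$ and $B$ are square or rectangular.
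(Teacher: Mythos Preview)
Your argument is correct and is precisely the standard block-similarity proof found in the cited reference; the paper itself does not supply a proof but merely quotes Horn and Johnson. One trivial slip: \(tI - M_1 = \begin{pmatrix} tI_U - AB & 0 \\ -B & tI_V \end{pmatrix}\) is block \emph{lower}-triangular, not upper (and likewise \(tI - M_2\) is lower, not upper, as you wrote), but the determinant formula you invoke holds for either kind of block-triangular matrix, so the conclusion is unaffected.
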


\begin{corollary}
  \label{cor:Hoffman-swapped}
  Let \(G = (V,E)\) be a graph.
  For every \(A \in \Acal_G\),
  \begin{equation*}
    \Hoffman(A,w)
    =
    \lambda_{\max}\paren[\Big]{
      (I+\At)^{\half}
      \Diag(w)
      (I+\At)^{\half}
    }.
  \end{equation*}
\end{corollary}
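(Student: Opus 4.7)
The plan is to reduce this to a direct application of \cref{lem:eigen-swap}. Starting from the definition \(\Hoffman(A,w) = \lambda_{\max}\paren{W^{\half}(I+\At)W^{\half}}\) with \(W \coloneqq \Diag(w)\), I would factor the matrix as a product. Since \(I+\At \succeq 0\) by~\cref{eq:I+At-key-properties}, it has a unique PSD square root \((I+\At)^{\half}\), and we can write
\begin{equation*}
  W^{\half}(I+\At)W^{\half}
  =
  \bigl[W^{\half}(I+\At)^{\half}\bigr]
  \bigl[(I+\At)^{\half}W^{\half}\bigr].
\end{equation*}
Setting \(P \coloneqq W^{\half}(I+\At)^{\half}\) and \(Q \coloneqq (I+\At)^{\half}W^{\half} = P^{\transp}\), the left-hand side is \(PQ\), while the symmetric PSD matrix appearing on the RHS of the corollary is exactly \(QP = (I+\At)^{\half} \Diag(w) (I+\At)^{\half}\).

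Next I would invoke \cref{lem:eigen-swap} to conclude that \(PQ\) and \(QP\) have the same nonzero eigenvalues with multiplicities. Both \(PQ\) and \(QP\) are symmetric and positive semidefinite (each being a product of the form \(M^{\transp}M\) or \(MM^{\transp}\) where \(M = (I+\At)^{\half}W^{\half}\)), hence all their eigenvalues are nonnegative. Therefore \(\lambda_{\max}(PQ)\) and \(\lambda_{\max}(QP)\) are either both zero (which happens precisely when \(PQ = 0\) and \(QP = 0\), since a PSD matrix with all zero eigenvalues vanishes) or they coincide with the common largest nonzero eigenvalue. In either case \(\lambda_{\max}(PQ) = \lambda_{\max}(QP)\), which yields the claimed identity.

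There is no real obstacle here: the only subtlety is ensuring that one cannot have \(\lambda_{\max}(PQ) > 0\) while \(\lambda_{\max}(QP) = 0\) or vice versa, and this is ruled out by PSD-ness together with \cref{lem:eigen-swap}. The proof should fit comfortably in a couple of lines.
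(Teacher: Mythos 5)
Your proof is correct and follows essentially the same route as the paper, which also deduces the identity directly from \cref{eq:I+At-key-properties} and \cref{lem:eigen-swap} applied to the factors \(W^{\half}(I+\At)^{\half}\) and \((I+\At)^{\half}W^{\half}\). Your explicit handling of the zero-eigenvalue case via positive semidefiniteness is a fine (if brief) elaboration of what the paper leaves implicit.
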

\begin{proof}
  Immediate from~\cref{eq:I+At-key-properties,lem:eigen-swap}.
\end{proof}

Now we are ready to prove~\cref{thm:Hoffman-pdmg-bound}.

\begin{proof}[Proof of \cref{thm:Hoffman-pdmg-bound}]
  Let \(A \in \Acal_G\).
  Clearly, \(\Hoffman_A(0) = 0\) and \(\Hoffman_A\) is positively
  homogeneous.
  Let \(w \in \Reals_+^V\) and set \(W \coloneqq \Diag(w)\).
  If \(i \in V\) and \(e_i \in \Reals^V\) denotes the \(i\)th standard
  basis vector, then
  \[\Hoffman_A(w) = \max_{\norm[2]{h}=1}
    \qform{W^{\half}(I+\At)W^{\half}}{h} \geq
    \qform{W^{\half}(I+\At)W^{\half}}{e_i} = w_i (I+\At)_{ii} = w_i.\]
  Thus,
  \begin{equation}
    \label{eq:Hoffman-vs-inftynorm}
    \norm[\infty]{w} \leq \Hoffman_A(w).
  \end{equation}
  In~particular, \(\Hoffman_A\) is nonnegative.
  Since
  \(\lambda_{\max}(X) = \max\setst{\qform{X}{h}}{h \in
    \Reals^V,\,\norm[2]{h} = 1}\) for every \(X \in \Sym{V}\), we have
  that \(\lambda_{\max}(X)\) is the pointwise supremum of linear
  functions of~\(X\), and thus it is convex.
  Now \cref{cor:Hoffman-swapped} shows that \(\Hoffman_A\) is the
  composition of a convex function after a linear function, whence
  \(\Hoffman_A\) is convex.
  Thus, \(\Hoffman_A\) is a gauge; recall that, under the assumption
  of positive homogeneity, sublinearity and convexity are equivalent.
  If \(z \in \Reals_+^V\) is such that \(w \leq z\), then
  \(W = \Diag(w) \preceq \Diag(z) \eqqcolon Z\) so
  \((I+\At)^{\half} W (I+\At)^{\half} \preceq (I+\At)^{\half} Z
  (I+\At)^{\half}\).
  Together with this fact, \cref{cor:Hoffman-swapped} shows that,
  \(\Hoffman_A(w) \leq \Hoffman_A(z)\), so \(\Hoffman_A\) is monotone.
  Now \cref{eq:Hoffman-vs-inftynorm} concludes the proof that
  \(\Hoffman_A\) is a positive definite monotone gauge.

  Next we prove the second inequality
  in~\cref{eq:Hoffman-bounds-chif}.
  Again, let \(w \in \Reals_+^V\) and set \(W \coloneqq \Diag(w)\).
  We may assume that \(w \neq 0\).
  By moving to an induced subgraph of~\(G\) if appropriate, we may
  assume that \(w > 0\).
  Abbreviate
  \(\Lambda \coloneqq \lambda_{\max}\paren{W^{\half}(I+\At)W^{\half}}
  = \Hoffman_A(w)\).
  Let \(b \in \Reals^V\) be a unit-norm \(\Lambda\)-eigenvector of
  \(W^{\half}(I+\At)W^{\half}\), and set \(B \coloneqq \Diag(b)\).
  Set \(X \coloneqq W^{-\half}B(I+\At)BW^{-\half}\).
  We claim that
  \begin{equation}
    \label{eq:Hoffman-bounds-chif-aux0}
    \iprodt{
      \incidvector{S}
    }{
      \paren[\big]{
        2Xw - \Lambda W^{-1}Bb
      }
    }
    \leq 1,
    \qquad
    \text{whenever \(S \subseteq V\) is stable}.
  \end{equation}
  Since \(I+\At\) is positive semidefinite, so is~\(X\).
  Hence,
  \begin{equation}
    \label{eq:Hoffman-bounds-chif-aux1}
    0
    \leq
    \qform{
      X
    }{
      \paren{
        \Lambda^{\half}\incidvector{S}
        -
        \Lambda^{-\half}w
      }
    }
    =
    \Lambda\qform{X}{\incidvector{S}}
    -
    2\iprodt{\incidvector{S}}{Xw}
    +
    \Lambda^{-1}\qform{X}{w}
  \end{equation}
  The rightmost term in the RHS of~\cref{eq:Hoffman-bounds-chif-aux1}
  is
  \begin{equation}
    \label{eq:Hoffman-bounds-chif-aux2}
    \Lambda^{-1}\qform{X}{w}
    =
    \Lambda^{-1} \qform{WW^{-\half}B{(I+\At)}BW^{-\half}W}{\ones}
    =
    \Lambda^{-1} \qform{BW^{\half}{(I+\At)}W^{\half}B}{\ones}
    =
    1,
  \end{equation}
  since diagonal matrices commute.
  The leftmost term in the RHS of~\cref{eq:Hoffman-bounds-chif-aux1}
  is
  \begin{equation}
    \label{eq:Hoffman-bounds-chif-aux3}
    \Lambda \qform{X}{\incidvector{S}}
    =
    \Lambda \qform{W^{-\half}B^2 W^{-\half}}{\incidvector{S}}
    =
    \Lambda \qform{\Diag(W^{-1}Bb)}{\incidvector{S}}
    =
    \Lambda \iprodt{\incidvector{S}}{W^{-1}Bb},
  \end{equation}
  where the first equation uses the facts that \(S\) is stable and
  \(A \in \Acal_G\).
  The proof of \cref{eq:Hoffman-bounds-chif-aux0} follows by
  combining~\cref{eq:Hoffman-bounds-chif-aux1,%
    eq:Hoffman-bounds-chif-aux2,eq:Hoffman-bounds-chif-aux3}.

  Let \(y\) be an optimal solution for the LP on the RHS
  of~\cref{eq:def-chif}.
  We may assume that
  \(\sum_{S \in \Scal(G)} y_S \incidvector{S} = w\).
  Now we multiply each inequality in~\cref{eq:Hoffman-bounds-chif-aux0} by \(y_S \geq 0\)
  and sum them all together to get
  \begin{equation}
    \label{eq:Hoffman-bounds-chif-aux4}
    \iprodt{w}(2Xw - \Lambda W^{-1}Bb) \leq \iprodt{\ones}{y}.
  \end{equation}
  The leftmost term is \(2\qform{X}{w} = 2\Lambda\)
  by~\cref{eq:Hoffman-bounds-chif-aux2}, and the second term in the
  LHS is
  \(\Lambda \iprodt{\ones}{WW^{-1}Bb} = \Lambda \iprodt{b}{b} =
  \Lambda\).
  Hence, \cref{eq:Hoffman-bounds-chif-aux4} yields
  \(\Hoffman_A(w) = \Lambda \leq \iprodt{\ones}{y} = \chi_f(G,w)\), as
  desired.
\end{proof}

We now define a new bound for the weighted stability number via an
SDP, which we will prove to be the (gauge) dual of \(\Hoffman_A\).
Let \(A \in \Sym{V}\) have zero diagonal.
For every \(w \in \Reals_+^V\), define
\begin{equation}
  \label{eq:def-Hdual-A}
  \Hdual(A,w)
  \coloneqq
  \max\setst{
    \iprodt{w}{x}
  }{
    x \in \Reals_+^V,\,
    \paren{I + \At}^{\half} \Diag(x) \paren{I + \At}^{\half} \preceq I
  }.
\end{equation}
As before, for a graph \(G\) define \(\Hdual(G,w) \coloneqq
\Hdual(A_G,w)\).
Note that the semidefinite constraint in~\cref{eq:def-Hdual-A} may be
rewritten as \(\sum_{i \in V} x_i^{} \oprodsym{b_i} \preceq I\) where
\(b_i\) is the \(i\)th column of \(\paren{I+\At}^{\half}\).
Hence, that constraint is a linear matrix inequality (LMI), and this
optimization problem is an SDP.
The dual SDP is
\begin{equation}
  \label{eq:Hdual-dual-SDP-pre}
  \min\setst[\big]{
    \trace(Y)
  }{
    Y \in \Psd{V},\,
    \diag\paren[\big]{\paren{I + \At}^{\half}Y\paren{I + \At}^{\half}}
    \geq w
  },
\end{equation}
where \(\trace\) is the trace.
The feasible point \({x = 0}\) is a relaxed Slater point of the
SDP~\cref{eq:def-Hdual-A}, that~is, a feasible solution where the
slack \(I - {\paren{I+\At}^{\half}\Diag(0)\paren{I+\At}^{\half}}\)
corresponding to the LMI constraint is positive definite.
Similarly, \(\paren{\norm[\infty]{w}+1}I\) is a relaxed Slater
point of the SDP~\cref{eq:Hdual-dual-SDP-pre} for every
\(w \in \Reals^V_+\).
Hence, by SDP Strong Duality (see, e.g.,
\cite[Theorem~7.1.2]{Nemirovski12a}), both primal and dual SDPs have
optimal solutions (which justifies our use of `\(\max\)'
and~`\(\min\)' above), and their optimal values coincide.
Hence,
\begin{equation}
  \label{eq:Hdual-dual-SDP}
  \Hdual(A,w)
  =
  \min\setst[\big]{
    \trace(Y)
  }{
    Y \in \Psd{V},\,
    \diag\paren[\big]{\paren{I + \At}^{\half}Y\paren{I + \At}^{\half}}
    \geq w
  }.
\end{equation}

The carefully crafted framework from \cref{sec:gauge-duality} now pays
off by providing a sleek proof of the main duality results of this
section:
\begin{theorem}
  \label{thm:Hdual-is-Hoffman-dual}
  Let \(G = (V,E)\) be a graph, and let \(A \in \Acal_G\).
  Then \(\Hdual_A^{} = \Hoffman_A^{\polar}\).
\end{theorem}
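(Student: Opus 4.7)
The plan is to unfold the definition of the polar gauge and observe that the feasible region of the SDP defining~$\Hdual_A$ coincides with the unit convex corner of the gauge~$\Hoffman_A$. By \cref{eq:def-polar-gauge}, we have
\[
  \Hoffman_A^{\polar}(w)
  = \max\setst{\iprodt{w}{x}}{x \in \Reals_+^V,\, \Hoffman_A(x) \leq 1},
\]
so it suffices to show, for every $x \in \Reals_+^V$, that
\[
  \Hoffman_A(x) \leq 1
  \iff
  (I+\At)^{\half}\Diag(x)(I+\At)^{\half} \preceq I.
\]

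The key ingredient for this equivalence is \cref{cor:Hoffman-swapped}, which rewrites
\(
  \Hoffman_A(x)
  = \lambda_{\max}\bigl((I+\At)^{\half}\Diag(x)(I+\At)^{\half}\bigr).
\)
Since $x \geq 0$, we have $\Diag(x) \succeq 0$, and thus the matrix $M(x) \coloneqq (I+\At)^{\half}\Diag(x)(I+\At)^{\half}$ is positive semidefinite. For any positive semidefinite matrix~$M$, the standard fact $\lambda_{\max}(M) \leq 1 \iff M \preceq I$ then yields the equivalence displayed above. With this, the optimization defining $\Hoffman_A^{\polar}(w)$ has the same feasible region and the same objective as the SDP~\cref{eq:def-Hdual-A} defining $\Hdual_A(w)$, and the two maxima coincide.

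There is essentially no hard step: the result follows by carefully unpacking the definitions and invoking \cref{cor:Hoffman-swapped}. The mildest subtlety is making sure one applies the corollary, rather than the original definition~\cref{eq:def-Hoffman-A} of~$\Hoffman_A(x)$, so that the matrix in the LMI of~\cref{eq:def-Hdual-A} literally appears in the eigenvalue expression; otherwise one would need an extra similarity argument (which is exactly what \cref{lem:eigen-swap}, and hence \cref{cor:Hoffman-swapped}, provides). The bulk of the substantive work for this theorem has already been invested in \cref{thm:Hoffman-pdmg-bound} and in proving that $\Hoffman_A$ is a positive definite monotone gauge, so that its polar is well-defined via~\cref{eq:def-polar-gauge} and inherits all the good duality properties from \cref{thm:gauge-duality}.
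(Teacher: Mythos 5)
Your proposal is correct and follows essentially the same route as the paper: unfold the definition \cref{eq:def-polar-gauge} of the polar gauge and use \cref{cor:Hoffman-swapped} to see that the constraint \(\Hoffman_A(x)\leq 1\) is exactly the LMI in \cref{eq:def-Hdual-A}, so the two optimization problems coincide. (The only cosmetic difference is that your appeal to positive semidefiniteness of \((I+\At)^{\half}\Diag(x)(I+\At)^{\half}\) is unnecessary, since \(\lambda_{\max}(M)\leq 1\iff M\preceq I\) for any symmetric \(M\).)
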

\begin{proof}
  Let \(x \in \Reals_+^V\).
  \Cref{cor:Hoffman-swapped} implies that \(\Hoffman(A,x) \leq 1\)
  holds if and only if
  \((I + \At)^{\half}\Diag(x)(I + \At)^{\half} \preceq I\).
  Hence, for every \(w \in \Reals_+^V\),
  \begin{equation*}
    \begin{split}
      \Hoffman^{\polar}(A,w)
      &= \max\setst{
        \iprodt{w}{x}
      }{
        x \in \Reals_+^V,\; \Hoffman(A,x) \leq 1
      }\\
      &= \max\setst[\big]{
        \iprodt{w}{x}
      }{
        x \in \Reals_+^V,\;
        (I + \At)^{\half}\Diag(x)(I + \At)^{\half} \preceq I
      }\\
      &= \Hdual(A,w).\qedhere
    \end{split}
  \end{equation*}
\end{proof}

\begin{corollary}
  \label{cor:Hdual-pdmg-bound}
  Let \(G = (V,E)\) be a graph.  For every \(A \in \Acal_G\), the
  function \(\Hdual_A \colon \Reals_+^V \to \Reals\) is a positive
  definite monotone gauge and, for every \(w \in \Reals_+^V\),
  \begin{equation}
    \label{eq:Hdual-bounds-alpha}
    \alpha(G,w) \leq \Hdual(A,w) \leq \norm[1]{w}.
  \end{equation}
\end{corollary}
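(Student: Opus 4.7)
The plan is to obtain the corollary as a direct consequence of the three main results proved in this section: \cref{thm:Hoffman-pdmg-bound} (which says $\Hoffman_A$ is a positive definite monotone gauge sandwiched between $\norm[\infty]{\cdot}$ and $\chi_f(G,\cdot)$), \cref{thm:Hdual-is-Hoffman-dual} (which identifies $\Hdual_A$ with the gauge dual $\Hoffman_A^{\polar}$), and \cref{thm:bound-conversion} (the general bound-conversion machinery from \cref{sec:gauge-duality}). The whole point of the carefully crafted framework of \cref{sec:gauge-duality} is that, once we know $\Hoffman_A$ is a lower-bound gauge for $\chi_f$ of the appropriate kind, its gauge dual is automatically an upper-bound gauge for $\alpha$ of the appropriate kind, and $\Hdual_A$ \emph{is} that dual.

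Concretely, I would proceed in two short steps. First, to see that $\Hdual_A$ is a positive definite monotone gauge: \cref{thm:Hoffman-pdmg-bound} tells us $\Hoffman_A$ is one, and then \cref{item:polar-bound-closed} of \cref{thm:bound-conversion} says its gauge polar $\Hoffman_A^{\polar}$ is again one. By \cref{thm:Hdual-is-Hoffman-dual} this polar coincides with $\Hdual_A$, so $\Hdual_A$ is a positive definite monotone gauge.

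Second, for the sandwich $\alpha(G,w) \leq \Hdual(A,w) \leq \norm[1]{w}$: I would apply \cref{item:chi-to-alpha} of \cref{thm:bound-conversion} with $\beta_G \coloneqq \Hoffman_A$. The hypothesis $\norm[\infty]{z} \leq \Hoffman(A,z) \leq \chi_f(G,z)$ required by that item is precisely~\cref{eq:Hoffman-bounds-chif} from \cref{thm:Hoffman-pdmg-bound}. The conclusion is $\alpha(G,w) \leq \Hoffman_A^{\polar}(w) \leq \norm[1]{w}$, which via \cref{thm:Hdual-is-Hoffman-dual} rewrites as the desired~\cref{eq:Hdual-bounds-alpha}.

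There is essentially no obstacle: all the hard work was done upstream, in the gauge duality machinery of \cref{sec:gauge-duality} and in the direct analysis of $\Hoffman_A$ in \cref{thm:Hoffman-pdmg-bound,thm:Hdual-is-Hoffman-dual}. The only thing one has to be careful about is that the hypotheses of \cref{item:chi-to-alpha} exactly match what is supplied by \cref{thm:Hoffman-pdmg-bound}, and that the identification $\Hdual_A = \Hoffman_A^{\polar}$ is invoked on both sides (once to transfer the gauge property, once to translate the sandwich inequality).
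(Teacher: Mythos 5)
Your proposal is correct and is exactly the paper's argument: the paper derives the corollary as ``immediate from'' \cref{thm:Hoffman-pdmg-bound,thm:bound-conversion,thm:Hdual-is-Hoffman-dual}, and your two steps (transferring the gauge property via \cref{item:polar-bound-closed} and the sandwich via \cref{item:chi-to-alpha}, with \cref{thm:Hdual-is-Hoffman-dual} identifying $\Hdual_A$ with $\Hoffman_A^{\polar}$) are precisely the intended expansion of that citation.
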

\begin{proof}
  Immediate from \cref{thm:Hoffman-pdmg-bound,thm:bound-conversion,%
    thm:Hdual-is-Hoffman-dual}.
\end{proof}

It is instructive to see how \cref{eq:Hdual-bounds-alpha} can be
derived directly, without any mention to the Hoffman
bound~\(\Hoffman_A\). Using the facts that both \(\Hoffman_A\) and
\(\Hdual_A\) are positive definite monotone gauges and that they are
gauge dual to each other, and using \cref{thm:bound-conversion}, the
proof of \cref{eq:Hdual-bounds-alpha} below provides an alternative
proof of \cref{eq:Hoffman-bounds-chif}.

Note that the feasible region
\begin{equation*}
  \Ucal_A
  \coloneqq
  \setst{
    x \in \Reals_+^V
  }{
    (I+\At)^{\half}\Diag(x)(I+\At)^{\half} \preceq I
  }
\end{equation*}
of the primal SDP~\cref{eq:def-Hdual-A} is easily checked to be a
convex corner.
Hence, \(\Hdual_A\) is a positive definite monotone gauge by
\cref{item:gauge-construction-primal}.
Since both optimization problems defining \(\alpha_G\) and
\(\Hdual_A\) maximize the same objective function, we can prove that
\(\alpha(G,w) \leq \Hdual(A,w)\) for every \(w \in \Reals_+^V\) by
showing that
\begin{equation}
  \label{eq:STAB-in-U-A}
  \STAB(G)
  \subseteq
  \Ucal_A.
\end{equation}
Since \(\Ucal_A\) is convex, it suffices to prove that
\(\incidvector{S} \in \Ucal_A\) for every stable set
\(S \subseteq V\).
But this follows from a simple calculation: if \(S \subseteq V\) is
stable, then \(\incidvector{S} \in \Ucal_A\) is equivalent to
\(\Diag(\incidvector{S})^{\half} (I + \At)
\Diag(\incidvector{S})^{\half} \preceq I\) by \cref{lem:eigen-swap},
and the LHS is
\(\Diag(\incidvector{S}) (I+\At) \Diag(\incidvector{S}) =
\Diag(\incidvector{S})\), which clearly satisfies
\(\Diag(\incidvector{S}) \preceq I\).
This proves\cref{eq:STAB-in-U-A}.

Let \(x \in \Ucal_A\).
\cref{lem:eigen-swap} implies
\({\Diag(x)^{\half}(I + \At)\Diag(x)^{\half}} \preceq I\).
Comparing diagonals, we reach \(x \le \ones\).
In other words, the set \(\Ucal_A\) is a subset of
\(\setst{x \in \Reals_+^V}{\norm[\infty]{x} \le 1}\), which implies
that \(\Hdual(A,w) \le \norm[1]{w}\) for every \(w \in \Reals_+^V\)
and concludes our alternative proof of \cref{eq:Hdual-bounds-alpha}.

At this point, we have almost fully answered the question which
introduces this text.
It remains only to show that our new bound reduces into the well-known
ratio bound~\cref{eq:ratio-bound} for regular graphs.
We start with a case slightly more general than that of regular
graphs.

We shall make use of the \emph{Moore–Penrose pseudoinverse}
\(M^{\MPinv}\) of a symmetric matrix \(M \in \Sym{V}\).
We rely on very few properties of~\(M^{\MPinv}\), which we include
here (see \cite{Ben-IsraelG03a} for further properties).
If \(Mx = \lambda x\) for some nonzero \(x \in \Reals^V\) and nonzero
\(\lambda \in \Reals\), i.e., if \(x \in \Reals^V\) is an eigenvector
of~\(M\) with nonzero eigenvalue \(\lambda\), then
\(M^{\MPinv}x = \lambda^{-1}x\).
The pseudoinverse commutes with positive semidefinite square roots,
that~is, \((M^{\half})^{\MPinv} = (M^{\MPinv})^{\half}\), and we use
the common shorthand notation \(M^{\MPhalf}\) for both of them.
Finally, \(M M^{\MPinv}\) is the orthogonal projection onto the range
of~\(M\).

\begin{theorem}
  \label{thm:Hdual-yields-ratio-bound-pre}
  Let \(G\) be a graph and let \(A \in \Acal_G\) be nonzero such that
  \(A \ones = \lambda\ones\) for \(\lambda \coloneqq
  \lambda_{\max}(A)\).
  Denote \(\tau \coloneqq \lambda_{\min}(A)\).
  Then
  \[\Hdual(A,\ones) = \frac{n}{1 - \lambda/\tau}.\]
\end{theorem}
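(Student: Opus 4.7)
The plan is to compute $\Hdual(A,\ones)$ directly by exhibiting matched primal and dual feasible points for the SDPs~\cref{eq:def-Hdual-A} and~\cref{eq:Hdual-dual-SDP}.

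The key spectral input is that $\ones$ is a top eigenvector of $I+\At$. Indeed, $A\ones=\lambda\ones$ gives $\At\ones=-(\lambda/\tau)\ones$, so $(I+\At)\ones=(1-\lambda/\tau)\ones$. Since $A$ is nonzero, symmetric, and has zero diagonal, $\trace(A)=0$ forces both $\tau<0$ and $\lambda>0$, hence $1-\lambda/\tau>1$. Moreover, $\lambda=\lambda_{\max}(A)$ is equivalent to $-\lambda/\tau=\lambda_{\max}(\At)$, so $1-\lambda/\tau=\lambda_{\max}(I+\At)$. Because $I+\At\succeq 0$, I also get $(I+\At)^{\half}\ones=\sqrt{1-\lambda/\tau}\,\ones$.

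For the lower bound $\Hdual(A,\ones)\geq n/(1-\lambda/\tau)$, I would plug $x\coloneqq \frac{1}{1-\lambda/\tau}\ones$ into the primal~\cref{eq:def-Hdual-A}. By \cref{lem:eigen-swap}, the LMI is equivalent to $\frac{1}{1-\lambda/\tau}(I+\At)\preceq I$, which holds since $\lambda_{\max}(I+\At)=1-\lambda/\tau$ by the first step. The objective value is $\iprodt{\ones}{x}=n/(1-\lambda/\tau)$.

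For the matching upper bound, I would take $Y\coloneqq \frac{1}{1-\lambda/\tau}\oprodsym{\ones}$ in the dual SDP~\cref{eq:Hdual-dual-SDP}. The spectral identity from the first step yields $(I+\At)^{\half}Y(I+\At)^{\half}=\oprodsym{\ones}$, whose diagonal is exactly $\ones$; hence $Y$ is dual feasible with $\trace(Y)=n/(1-\lambda/\tau)$. There is no substantive obstacle: the entire argument reduces to recognizing $\ones$ as the top eigenvector of $I+\At$ and scaling rank-one primal and dual solutions accordingly.
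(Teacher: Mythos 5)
Your proof is correct and follows essentially the same route as the paper: exhibit the primal point \(x=(1-\lambda/\tau)^{-1}\ones\) and a matching rank-one dual solution, then invoke SDP weak duality. The paper writes its dual certificate as \((I+\At)^{\MPhalf}\ones\ones^{\transp}(I+\At)^{\MPhalf}\), which by the same eigenvector identity you use equals your \((1-\lambda/\tau)^{-1}\oprodsym{\ones}\), so the two arguments coincide.
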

\begin{proof}
  We have
  \begin{equation*}
    (I + \At)\ones
    = \paren[\Big]{1 - \frac{\lambda}{\tau}} \ones,
    \qquad
    \text{and}
    \qquad
    (I + \At)^{\MPinv}\ones
    = \paren[\Big]{1 - \frac{\lambda}{\tau}}^{-1} \ones,
  \end{equation*}

  To find the optimal value of the SDP~\cref{eq:def-Hdual-A}, it
  suffices by SDP weak duality to exhibit a pair of primal and dual
  feasible solutions with the same objective value.
  We start with the primal SDP~\cref{eq:def-Hdual-A}.
  Define \(x \coloneqq \mu \ones \geq 0\), where
  \(\mu \coloneqq \paren{1 - \lambda/\tau}^{-1}\).
  Note that \(x\) is feasible in~\cref{eq:def-Hdual-A}, since
  \(\lambda_{\max}\paren[\big]{ (I + \At)^{\half} \Diag(x) (I +
    \At)^{\half} } = \mu \lambda_{\max}(I + \At) = 1\), and its
  objective value is \(\iprodt{\ones}{x}
  = n/\paren{1 - \lambda/\tau}\).

  A dual feasible solution with the same value is
  \(Y \coloneqq (I + \At)^{\MPhalf} \ones\ones^\transp (I +
  \At)^{\MPhalf} \succeq 0\).
  Since \(\ones\) is an eigenvector of \((I + \At)^{\half}\), it
  follows that
  \(\ones = (I + \At)^{\half}(I + \At)^{\MPhalf}\ones\), which
  implies
  \(\diag\paren[\big]{(I + \At)^{\half}Y(I + \At)^{\half}} =
  \diag(\ones\ones^\transp) = \ones\).
  Thus, \(Y\) is feasible in~\cref{eq:Hdual-dual-SDP}, and its
  objective value is
  \begin{equation*}
    \trace(Y)
    = \trace\paren[\Big]{
      (I + \At)^{\MPhalf}
        \ones\ones^\transp
      (I + \At)^{\MPhalf}
    }
    = \ones^\transp (I + \At)^{\MPinv} \ones
    = \paren{1 - \lambda/\tau}^{-1} \ones^\transp\ones
    = \frac{n}{1 - \lambda/\tau} = \iprodt{\ones}{x}. \qedhere
  \end{equation*}
\end{proof}

The next corollary is an immediate consequence of
\cref{thm:Hdual-yields-ratio-bound-pre} when applied to the adjacency
matrix~\(A_G\) of a regular graph~\(G\), and it proves that the
Delsarte-Hoffman ratio bound~\cref{eq:ratio-bound} and the Hoffman
bound~\cref{eq:Hoffman-bound} are dual to each other, since
\(\Hdual_G\) and \(\Hoffman_G\) are weighted graph \emph{parameters}
dual to each other in the sense of \cref{thm:param-conversion} by
\cref{thm:Hoffman-pdmg-bound,thm:Hdual-is-Hoffman-dual}.
\begin{corollary}
  \label{cor:Hdual-yields-ratio-bound}
  Let \(G\) be a \(k\)-regular graph, with \(k \geq 1\).
  Denote \(\tau \coloneqq \lambda_{\min}(A_G)\).
  Then
  \[\Hdual(G,\ones) = \frac{n}{1 - k/\tau}.\]
\end{corollary}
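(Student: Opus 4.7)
The plan is to deduce the corollary as a direct specialization of Theorem~\ref{thm:Hdual-yields-ratio-bound-pre}, taking $A \coloneqq A_G$. Three hypotheses must be verified: (i) that $A \in \Acal_G$, (ii) that $A$ is nonzero, and (iii) that $A\ones = \lambda_{\max}(A)\ones$. Part (i) holds by the definition of $\Acal_G$, and (ii) follows from $k \geq 1$, which guarantees that $G$ has at least one edge and hence $A_G \neq 0$. The remaining task, and the only one beyond definition-chasing, is showing that $\lambda_{\max}(A_G) = k$.

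For the latter, $k$-regularity gives $A_G \ones = k\ones$ immediately, so $k$ is an eigenvalue of $A_G$. To see that it is in fact the largest, one uses the standard argument for nonnegative symmetric matrices: if $v$ is an eigenvector with eigenvalue $\mu$, then after rescaling so that $\max_i \abs{v_i} = 1$ and choosing an index $i$ attaining the max,
\begin{equation*}
\abs{\mu} = \abs{(A_G v)_i} \leq \sum_{j} (A_G)_{ij} \abs{v_j} \leq \sum_{j} (A_G)_{ij} = k.
\end{equation*}
Hence $\lambda_{\max}(A_G) = k$. Invoking Theorem~\ref{thm:Hdual-yields-ratio-bound-pre} with $\lambda = k$ and $\tau = \lambda_{\min}(A_G)$ then gives $\Hdual(G,\ones) = \Hdual(A_G,\ones) = n/(1 - k/\tau)$, as required. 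Since the argument is a routine specialization, there is no real obstacle; the only point of actual content is the classical identification of the spectral radius of a regular graph's adjacency matrix with its common degree.
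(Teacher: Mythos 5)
Your proposal is correct and follows the paper's own route: the paper also obtains this corollary by applying \cref{thm:Hdual-yields-ratio-bound-pre} to \(A = A_G\) for a \(k\)-regular graph, the only content being that \(A_G\ones = k\ones\) with \(k = \lambda_{\max}(A_G)\). Your explicit verification of \(\lambda_{\max}(A_G) = k\) simply spells out what the paper treats as immediate.
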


The proof of \cref{thm:Hdual-yields-ratio-bound-pre} above suggests
using a scalar multiple of a Perron-Frobenius eigenvector of a
connected graph (see \cite[Theorem~8.8.1]{GodsilR01a}) to get a
feasible solution for the dual SDP~\cref{eq:Hdual-dual-SDP}, which
yields another extension of the ratio bound~\cref{eq:ratio-bound}:
\begin{proposition}
  \label{prop:alpha-bound-Perron}
  Let \(G = (V,E)\) be a connected graph.
  Set \(\lambda \coloneqq \lambda_{\max}(A_G)\) and
  \(\tau \coloneqq \lambda_{\min}(A_G)\).
  Let \(p \in \Reals_+^V\) be the unit-norm Perron-Frobenius
  eigenvector of \(A_G\).
  Then
  \begin{equation}
    \label{eq:alpha-bound-Perron}
    \alpha(G)
    \leq
    \frac{
      \max_{i \in V} p_i^{-2}
    }{
      1 - \lambda/\tau
    }.
  \end{equation}
\end{proposition}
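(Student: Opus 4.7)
The plan is to exhibit an explicit feasible solution for the dual SDP~\cref{eq:Hdual-dual-SDP} at $A \coloneqq A_G$ and $w \coloneqq \ones$, and then invoke $\alpha(G) \leq \Hdual(G,\ones)$ from \cref{cor:Hdual-pdmg-bound} together with weak SDP duality. Mimicking the construction in the proof of \cref{thm:Hdual-yields-ratio-bound-pre}, but substituting the Perron eigenvector $p$ for $\ones$, I would take
\begin{equation*}
  Y \coloneqq \mu (I + \At)^{\MPhalf} p p^\transp (I + \At)^{\MPhalf} \succeq 0,
\end{equation*}
with $\mu > 0$ chosen as small as possible subject to the diagonal constraint of~\cref{eq:Hdual-dual-SDP}.

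The key arithmetic observation is that $p$ is an eigenvector of $I + \At$ with eigenvalue $1 - \lambda/\tau$. Because $G$ is connected (and, for the statement to be nontrivial, has at least one edge), Perron-Frobenius yields $\lambda > 0$ and $p > 0$, while $\tau < 0$; hence $1 - \lambda/\tau > 1 > 0$, so $p$ lies in the range of $I + \At$, and consequently $(I + \At)^{\half}(I + \At)^{\MPhalf} p = p$. A direct calculation then gives
\begin{equation*}
  (I + \At)^{\half} Y (I + \At)^{\half} = \mu\, p p^\transp,
\end{equation*}
whose diagonal has $i$th entry $\mu p_i^2$. Choosing $\mu \coloneqq \max_{i \in V} p_i^{-2}$ (finite since $p > 0$) makes this diagonal $\geq \ones$, so $Y$ is dual feasible.

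Using $(I + \At)^{\MPhalf} p = (1 - \lambda/\tau)^{-1/2} p$ and $\norm[2]{p} = 1$, the objective evaluates to
\begin{equation*}
  \trace(Y)
  = \mu \, p^\transp (I + \At)^{\MPinv} p
  = \frac{\max_{i \in V} p_i^{-2}}{1 - \lambda/\tau}.
\end{equation*}
Combining $\alpha(G) \leq \Hdual(G,\ones)$ from \cref{cor:Hdual-pdmg-bound} with $\Hdual(G,\ones) \leq \trace(Y)$ from~\cref{eq:Hdual-dual-SDP} yields~\cref{eq:alpha-bound-Perron}. The only subtle point is the identity $(I+\At)^{\half}(I+\At)^{\MPhalf} p = p$, but this follows immediately from $p$ being an eigenvector of $I + \At$ with nonzero eigenvalue, as noted; the remainder is bookkeeping that mirrors \cref{thm:Hdual-yields-ratio-bound-pre} verbatim.
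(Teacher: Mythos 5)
Your proof is correct and takes essentially the same route as the paper: both exhibit a rank-one feasible solution of the dual SDP~\cref{eq:Hdual-dual-SDP} built from the Perron eigenvector and combine \cref{eq:Hdual-bounds-alpha} with weak duality. In fact, since \(p\) is an eigenvector of \(I+\At\) with eigenvalue \(1-\lambda/\tau\), your matrix \(Y\) simplifies to exactly the paper's choice \(\eta\, pp^\transp\), where \(\eta\) denotes the right-hand side of~\cref{eq:alpha-bound-Perron}.
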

\begin{proof}
  Let \(\eta\) denote the RHS in~\cref{eq:alpha-bound-Perron}.
  Then
  \(\trace\paren[\big]{\eta pp^\transp} = \eta \norm[2]{p}^2 = \eta\).
  Moreover,
  \begin{equation*}
    \diag\paren[\big]{
      (I + \At_G)^{\half}\eta pp^\transp(I + \At_G)^{\half}
    }
    =
    \eta \paren[\big]{1 - k/\tau} \diag(pp^\transp)
    \geq
    \ones.
  \end{equation*}
  Hence \(\eta pp^\transp\) is feasible in the dual
  SDP~\cref{eq:Hdual-dual-SDP} with objective value~\(\eta\), so
  \(\alpha(G) \leq \Hdual(A_G,\ones) \leq \eta\) by
  \cref{eq:Hdual-bounds-alpha}.
\end{proof}

When \cref{prop:alpha-bound-Perron} is applied to a \emph{connected}
regular graph, we recover \cref{cor:Hdual-yields-ratio-bound}.
In fact, we point out that Proposition~\ref{prop:alpha-bound-Perron}
can be obtained elementarily.
We shall use the same notation for \(\lambda\), \(\tau\), \(p\), and
\(\eta\) from \cref{prop:alpha-bound-Perron} and its proof.
Note that \(I+\At_G \succeq (1-\lambda/\tau)\oprodsym{p}\).
If \(S\) is a maximum stable set, then
\[
  \alpha(G)
  =
  \qform{(I + \At_G)}{\incidvector{S}}
  \geq
  \paren[\Big]{
    1 - \frac{\lambda}{\tau}
  }
  \iprodt{\incidvector{S}}{p}
  \iprodt{p}{\incidvector{S}}
  \geq
  \frac{1}{\eta}
  \iprodt{\incidvector{S}}{\ones}
  \iprodt{\ones}{\incidvector{S}}
  =
  \frac{\alpha(G)^2}{\eta}.
\]

By \cref{thm:Hoffman-pdmg-bound} and the results from
\cref{sec:gauge-duality} (see, e.g., the paragraph that follows
\cref{thm:gauge-duality}), the Hoffman bound \(\Hoffman_A\) may be
expressed as \emph{linear} optimization of the function
\(x \in \Reals^V \mapsto \iprodt{w}{x}\) with \(x\) ranging over some
convex set.
We provide explicit descriptions of such sets below.
They will be used in \cref{sec:theta} to provide new descriptions of
the theta body, which is the convex corner over which the Lovász theta
function optimizes.  We will make use of the following notation: for a
graph \(G = (V,E)\) and \(A \in \Acal_G\), set
\begin{equation}
  \label{eq:def-H-A}
  \Hcal_A
  \coloneqq
  \setst[\big]{
    x \in \Reals_+^V
  }{
    \exists Y \in \Psd{V},\,
    \trace(Y) \leq 1,\,
    \diag\paren[\big]{(I+\At)^{\half}Y(I+\At)^{\half}} \geq x
  }.
\end{equation}
\begin{theorem}
  \label{thm:Hoffman-max-convex}
  Let \(G = (V,E)\) be a graph, and let \(A \in \Acal_G\).
  Then, for every \(w \in \Reals_+^V\),
  \begin{align}
    \label{eq:Hoffman-over-convex-set}
    \Hoffman(A,w)
    & =
    \max\setst[\big]{
      \iprodt{w}{\diag\paren[\big]{(I+\At)^{\half}X(I+\At)^{\half}}}
    }{
      X \in \Psd{V},\,
      \trace(X) = 1
    }
    \\
    \label{eq:Hoffman-over-convex-corner}
    & =
    \max\setst[\big]{
      \iprodt{w}{x}
    }{
      x \in \Hcal_A
    }.
  \end{align}
  and the feasible region~\(\Hcal_A\) of the second maximization
  problem is the unit convex corner of~\(\Hdual(A,\cdot)\).
\end{theorem}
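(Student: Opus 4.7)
The plan is to prove the three claims in the following order: first equality~\cref{eq:Hoffman-over-convex-set}; then the assertion that \(\Hcal_A\) is the unit convex corner of~\(\Hdual_A\); and finally~\cref{eq:Hoffman-over-convex-corner} as a consequence.

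For~\cref{eq:Hoffman-over-convex-set}, I would apply the standard variational characterization \(\lambda_{\max}(M) = \max\setst{\trace(MX)}{X \in \Psd{V},\,\trace(X) = 1}\) to the symmetric matrix \(M \coloneqq (I+\At)^{\half}\Diag(w)(I+\At)^{\half}\). By \cref{cor:Hoffman-swapped}, \(\Hoffman(A,w) = \lambda_{\max}(M)\). Cyclicity of the trace together with \(\trace(\Diag(w) N) = \iprodt{w}{\diag(N)}\) for any \(N \in \Sym{V}\) rewrites the objective as
\[
  \trace(MX)
  = \trace\paren[\big]{\Diag(w)(I+\At)^{\half}X(I+\At)^{\half}}
  = \iprodt{w}{\diag\paren[\big]{(I+\At)^{\half}X(I+\At)^{\half}}},
\]
which yields~\cref{eq:Hoffman-over-convex-set}.

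Next I would prove that \(\Hcal_A = \setst{x \in \Reals_+^V}{\Hdual_A(x) \leq 1}\), i.e., that \(\Hcal_A\) is the unit convex corner of~\(\Hdual_A\). I would invoke the dual SDP~\cref{eq:Hdual-dual-SDP} and the SDP strong duality argument preceding it, which shows that the minimum there is attained for every \(x \in \Reals_+^V\). Hence \(\Hdual_A(x) \leq 1\) holds if and only if there exists \(Y \in \Psd{V}\) with \(\trace(Y) \leq 1\) and \(\diag\paren[\big]{(I+\At)^{\half}Y(I+\At)^{\half}} \geq x\), which is exactly the condition defining \(x \in \Hcal_A\).

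Finally, \cref{eq:Hoffman-over-convex-corner} follows from the preceding step by pure duality. Since \(\Hcal_A\) is the unit convex corner of~\(\Hdual_A\), the definition~\cref{eq:def-polar-gauge} of the polar gauge gives \(\max\setst{\iprodt{w}{x}}{x \in \Hcal_A} = \Hdual_A^{\polar}(w)\). By \cref{thm:Hdual-is-Hoffman-dual} and the involutivity of gauge duality from \cref{item:gauge-duality} of \cref{thm:gauge-duality}, we have \(\Hdual_A^{\polar} = \Hoffman_A^{\polar\polar} = \Hoffman_A\), so this maximum equals \(\Hoffman(A,w)\), matching~\cref{eq:Hoffman-over-convex-set}. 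I do not anticipate a significant obstacle; the only subtle point is the attainment of the minimum in~\cref{eq:Hdual-dual-SDP}, which was already secured by the relaxed Slater point argument given earlier in this section.
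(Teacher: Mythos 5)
Your proposal is correct and follows essentially the same route as the paper: the paper obtains \cref{eq:Hoffman-over-convex-set} by writing \(\Hoffman(A,w)\) as the SDP \(\min\setst{\mu}{(I+\At)^{\half}\Diag(w)(I+\At)^{\half} \preceq \mu I}\) and invoking SDP Strong Duality, which is exactly the variational characterization of \(\lambda_{\max}\) you cite directly, and it likewise identifies \(\Hcal_A\) with \(\setst{x \in \Reals_+^V}{\Hdual(A,x) \leq 1}\) via the attained dual formulation~\cref{eq:Hdual-dual-SDP} and then concludes by gauge duality through \cref{thm:Hoffman-pdmg-bound,thm:Hdual-is-Hoffman-dual,item:gauge-duality}. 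Your handling of the attainment point in~\cref{eq:Hdual-dual-SDP} matches the paper's reliance on the relaxed Slater argument, so no gap remains.
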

\begin{proof}
  By \cref{cor:Hoffman-swapped}, we can formulate \(\Hoffman(A,w)\) as
  the SDP
  \begin{equation*}
    \Hoffman(A,w)
    =
    \min\setst[\big]{
      \mu \in \Reals
    }{
      \paren{I+\At}^{\half} \Diag(w) \paren{I+\At}^{\half} \preceq \mu I
    },
  \end{equation*}
  which clearly has a relaxed Slater point and is bounded below.
  Hence, by SDP Strong Duality, the dual SDP has an optimal solution
  and the same optimal value as the primal SDP:
  \begin{equation}
    \label{eq:Hoffman-dual-SDP-swapped}
    \Hoffman(A,w)
    =
    \max\setst{
      \trace\paren{
        \paren{I + \At}^{\half} \Diag(w) \paren{I + \At}^{\half} X
      }
    }{
      X \in \Psd{V},\,
      \trace(X) = 1
    }.
  \end{equation}
  Since the objective function in \cref{eq:Hoffman-dual-SDP-swapped}
  may be rewritten as
  \(X \in \Sym{V} \mapsto \iprodt{w}{\diag(\paren{I + \At}^{\half} X
    \paren{I + \At}^{\half})}\), the proof of
  \cref{eq:Hoffman-over-convex-set} is complete.
  To prove \cref{eq:Hoffman-over-convex-corner}, by
  \cref{thm:Hoffman-pdmg-bound,%
    thm:Hdual-is-Hoffman-dual,item:gauge-duality}, it suffices to
  prove that
  \begin{equation*}
    \Hcal_A = \setst{x \in \Reals_+^V}{\Hdual(A,x) \leq 1}.
  \end{equation*}
  However, this is immediate from the dual
  formulation~\cref{eq:Hdual-dual-SDP}.
\end{proof}

Note that the feasible region in \cref{eq:Hoffman-over-convex-set} is
not a convex corner like the one in
\cref{eq:Hoffman-over-convex-corner}, however the latter feasible
region involves a projection (of~\(Y\) in \cref{eq:def-H-A}) whereas
the former is projection-free.

The definitions \(\Hoffman(G,w) = \Hoffman(A_G,w)\) and
\(\Hdual(G,w) = \Hdual(A_G,w)\) create graph parameters.
There is, however, a more interesting approach.
Consider \(\Hoffman\) and \(\Hdual\) as functions defined for every
pair \((A,w)\) where \(w \in \Reals_+^V\) and \(A \in \Sym{V}\) is
such that \(\diag(A) = 0\).
For a given graph \(G\) and \(w \in \Reals^V\), the set \(\Acal_G\)
defines many bounds, and we can simply choose the best one.
In other words, to find the best lower bound for \(\chi_f(G,w)\),
consider
\[
  \sup_{A \in \Acal_G} \Hoffman(A,w),
\]
and to find the best upper bound for \(\alpha(G,w)\) consider
\[
 \inf_{A \in \Acal_G} \Hdual(A,w).
\]
The expressions above define functions which actually depend on
\((G,w)\), and it is possible to prove them to be graph parameters,
i.e., to be constant on isomorphism classes of graphs.
There is, however, no need to do so, since \cref{sec:theta} will show
that both graph parameters just mentioned are actually well known.

\section{Relation with Luz's Convex Quadratic Programming Bound}
\label{sec:Luz}

Luz~\cite{Luz95a} introduced a convex quadratic program (CQP) that
bounds the stability number, which was later generalized to the
weighted case in~\cite{LuzC01a,CarliT17a}; we will use the weighted
version from~\cite{CarliT17a}.
Let \(G = (V,E)\) be a graph, let \(A \in \Acal_G\), and set \(\At\)
as in \cref{eq:At-def}.
Denote the componentwise square root of a nonnegative vector
\(w \in \Reals_+^V\) as \(\sqrt{w} \in \Reals_+^V\).
For every \(x \in \Reals^V\), define the orthogonal projector
\(\projsupp x \coloneqq \sum \setst{\oprodsym{e_i}}{i \in V,\, x_i
  \neq 0} \in \Sym{V}\).
For every \(w \in \Reals_+^V\), define \(\Luz(A,w)\) as the optimal
value of the following CQP:
\begin{align}
  \label{eq:def-Luz}
  \Luz(A,w)
  & \coloneqq \sup\setst[\big]{
  2\iprodt{w}{x}
  -
  \iprodt{x}{W^{\half}(I + \At)W^{\half}x}
  }{
  x \in \Reals_+^V
  },
  \qquad
  \text{where }
  W \coloneqq \Diag(w).
  \\
  \label{eq:def-Luz-alt-supp}
  & = \sup\setst[\big]{
    2\iprodt{\sqrt{w}}{x}
    -
    \qform{(I+\At)}{x}
  }{
    x \in \Reals_+^V,\,
    \projsupp w x = x
  }.
\end{align}
We write `\(\sup\)' rather than `\(\max\)' because \(\Luz(A,w)\) may
be \(+\infty\); we will prove this below.
(One may use the changes of variables \(x \mapsto W^{\half}x\) and
\(x \mapsto W^{\MPhalf}x\) to prove equivalence between
formulations~\cref{eq:def-Luz,eq:def-Luz-alt-supp}, as well as the
fact that \(W^{\half} W^{\MPhalf} = \projsupp w\).)
To see that
\begin{equation}
  \label{eq:alpha-leq-Luz}
  \alpha(G,w) \leq \Luz(A,w),
\end{equation}
note that the objective value of \(x \coloneqq \incidvector{S}\) is
\(\iprodt{w}{x}\) in~\cref{eq:def-Luz} whenever \(S \subseteq V\) is
stable.
In fact, in \cite[Corollary~29]{CarliT17a} it is proved that
\begin{equation}
  \label{eq:theta-best-Luz-pre}
  \theta(G,w) = \min_{A \in \Acal_G} \Luz(A,w),
\end{equation}
extending the unweighted version first proved by Luz and
Schrijver~\cite{LuzS05a}; see also~\cite{Luz16a}.

In this section, we study two results about the optimization problem
\cref{eq:def-Luz}.
First, we show that for every \(A \in \Acal_G\) the new upper bound
\(\Hdual_A\) is bounded above by \(\Luz_A\).
We then proceed to show that for every \emph{nonnegative} generalized
adjacency matrix of \(G\), the parameters \(\Hdual_A\) and \(\Luz_A\)
actually coincide.

The (Lagrangean) dual of the CQP~\cref{eq:def-Luz-alt-supp} can be
formulated as
\begin{equation*}
  \inf\setst[\big]{
    \qform{(I+\At)}{y}
  }{
    y \in \Reals^V,\,
    \projsupp w (I+\At)y \geq \sqrt{w}\,
  },
\end{equation*}
which is equivalent to
\begin{equation}
  \label{eq:Luz-dual-pre-supp}
  \inf\setst[\big]{
    \norm[2]{y}^2
  }{
    y \in \Reals^V,\,
    \projsupp w (I+\At)^{\half} y \geq \sqrt{w}\,
  }.
\end{equation}
By Convex Optimization Strong Duality (see, e.g., \cite{BoydV04a}) the
optimal values of~\cref{eq:def-Luz-alt-supp,eq:Luz-dual-pre-supp}
coincide and~\cref{eq:Luz-dual-pre-supp} has an optimal solution
whenever it has a feasible solution:
\begin{equation}
  \label{eq:Luz-dual-supp}
  \Luz(A,w)
  =
  \min\setst[\big]{
    \norm[2]{y}^2
  }{
    y \in \Reals^V,\,
    \projsupp w (I+\At)^{\half} y \geq \sqrt{w}\,
  }
  \qquad
  \text{whenever }
  \Luz(A,w) < \infty.
\end{equation}

\begin{theorem}
  \label{thm:Hdual-leq-Luz}
  Let \(G = (V,E)\) be a graph, let \(A \in \Acal_G\), and let
  \(w \in \Reals_+^V\).
  For every \(y \in \Reals^V\) feasible in the dual
  CQP~\cref{eq:Luz-dual-supp}, we have that \(\oprodsym{y}\) is
  feasible in the dual SDP~\cref{eq:Hdual-dual-SDP}.
  Consequently,
  \begin{equation}
    \label{eq:Hdual-leq-Luz}
    \Hdual(A,w) \leq \Luz(A,w).
  \end{equation}
\end{theorem}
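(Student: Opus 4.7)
The plan is to take any feasible $y$ for the dual CQP~\cref{eq:Luz-dual-supp} and exhibit, via the rank-one lift $Y \coloneqq yy^\transp$, a feasible solution of the dual SDP~\cref{eq:Hdual-dual-SDP} with matching objective value $\norm[2]{y}^2$. Granting this, the inequality~\cref{eq:Hdual-leq-Luz} follows at once: if $\Luz(A,w) = +\infty$ there is nothing to prove, and otherwise the sentence following~\cref{eq:Luz-dual-supp} guarantees that the dual CQP attains its minimum at some $y$, whence $\Hdual(A,w) \leq \trace(yy^\transp) = \norm[2]{y}^2 = \Luz(A,w)$ since~\cref{eq:Hdual-dual-SDP} is a minimization problem.

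To check feasibility of $Y = yy^\transp$ in~\cref{eq:Hdual-dual-SDP}, I would first note that $Y \succeq 0$ trivially. For the linear constraint, set $z \coloneqq (I+\At)^{\half} y \in \Reals^V$, so that $(I+\At)^{\half} Y (I+\At)^{\half} = zz^\transp$, whose $i$th diagonal entry is $z_i^2$. The goal becomes $z_i^2 \geq w_i$ for every $i \in V$, which I would verify by splitting on whether $w_i$ vanishes: when $w_i = 0$ the inequality is immediate, and when $w_i > 0$ the coordinate $i$ lies in the image of $\projsupp w$, so feasibility of $y$ in~\cref{eq:Luz-dual-supp} yields $z_i = (\projsupp w \, z)_i \geq \sqrt{w_i} > 0$ and hence $z_i^2 \geq w_i$. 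Combined with the identity $\trace(Y) = \norm[2]{y}^2$, this completes the construction.

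The only real subtlety is the role of the support projector $\projsupp w$: its presence in~\cref{eq:Luz-dual-supp} means that coordinates of $z$ outside the support of $w$ are left unconstrained, but this is harmless precisely because the SDP diagonal inequality is automatic at those coordinates. Everything else—the rank-one lift, the computation of $\trace(yy^\transp)$, and weak duality applied to~\cref{eq:Hdual-dual-SDP}—is entirely routine, so I do not foresee any obstacle beyond this minor case analysis.
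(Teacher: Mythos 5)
Your proposal is correct and follows essentially the same route as the paper: lift a dual-CQP-feasible \(y\) to the rank-one matrix \(\oprodsym{y}\), check the diagonal constraint of~\cref{eq:Hdual-dual-SDP}, and match objective values via \(\trace(\oprodsym{y}) = \norm[2]{y}^2\). Your explicit case split on the support of \(w\) (and the separate treatment of \(\Luz(A,w) = +\infty\)) merely spells out details the paper's proof leaves implicit, so there is nothing substantive to add.
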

\begin{proof}
  Let \(y \in \Reals^V\) be feasible in the dual
  CQP~\cref{eq:Luz-dual-supp}, so~that
  \(\projsupp w (I + \At)^{\half}y \geq \sqrt{w}\).
  Then \(Y \coloneqq \oprodsym{y}\) is a feasible solution in the dual
  formulation \cref{eq:Hdual-dual-SDP} of \(\Hdual(A,w)\), since
  \(\diag\paren[\big]{(I + \At)^{\half}\oprodsym{y}(I + \At)^{\half}}
  \geq w\).
  Furthermore, the objective values are the same, as
  \(\trace(Y) = \norm[2]{y}^2\).
  Hence, \(\Hdual(A,w) \leq \Luz(A,w)\).
\end{proof}

The relationship between dual feasible solutions for~\(\Hdual\)
and~\(\Luz\) displayed in~\cref{thm:Hdual-leq-Luz} lead to our naming
the bound~\(\Hdual\) as the capital letter for~\(\Luz\).
In~fact, this relationship shows that the new upper bound \(\Hdual\)
can be regarded as semidefinite strengthening of the Luz
bound~\(\Luz\).

The inequality \cref{eq:Hdual-leq-Luz} may be strict, and the gap may
be arbitrarily large.
To see this, let \(G = (V,E)\) be a \(k\)-regular graph, and set
\(A \coloneqq -A_G\).
Note that \(\ones\) is an eigenvector of~\(A\) corresponding to the
eigenvalue \(\lambda_{\min}(A) = -\lambda_{\max}(A_G) = -k\), so that
\((I + \At)\ones = 0\).
If \(w \in \Reals_+^V\) is positive, then \(\projsupp w = I\).
By~\cref{eq:def-Luz-alt-supp},
\begin{equation*}
  \Luz(A, w)
  =
  \sup\setst{
    2\iprodt{\sqrt{w}}{x}
    - \qform{(I+\At)}{x}
  }{
    x \in \Reals_+^V
  }
  \geq
  \sup_{\mu \geq 0} 2\mu\iprodt{\sqrt{w}}{\ones}
  =
  +\infty.
\end{equation*}

We will see next that, when the generalized adjacency matrix
\(A \in \Acal_G\) is required to be nonnegative, equality holds
in~\cref{eq:Hdual-leq-Luz}.
We will use repeatedly that, if \(A \in \Acal_G\) is nonnegative, then
so is~\(\At\).
In this case, for every \(w, x \in \Reals_+^V\), we have that
\[
  \iprodt x {\projsupp w (I + \At) \projsupp w x}
  \le \iprodt x {(I + \At) x}.
\]
Consequently, the constraint ``\(\projsupp{w} x = x\)'' may be dropped
from \cref{eq:def-Luz-alt-supp}: for every \(w \in \Reals_+^V\),
\begin{equation}
  \label{eq:def-Luz-alt}
  \Luz(A, w) 
  = \sup\setst[\big]{
    2\iprodt{\sqrt{w}}{x}
    -
    \qform{(I+\At)}{x}
  }{
    x \in \Reals_+^V
  }
  \qquad
  \text{whenever }
  A \geq 0.
\end{equation}
Accordingly, \cref{eq:Luz-dual-supp} becomes
\begin{equation}
  \label{eq:Luz-dual}
  \Luz(A,w)
  =
  \min\setst[\big]{
    \norm[2]{y}^2
  }{
    y \in \Reals^V,\,
    (I+\At)^{\half} y \geq \sqrt{w}\,
  }
  \qquad
  \text{whenever }
  A \geq 0
  \text{ and }
  \Luz(A,w) < \infty.
\end{equation}

\begin{theorem}
  \label{thm:Luz-pdmg-bound}
  Let \(G = (V,E)\) be a graph.
  For every nonnegative \(A \in \Acal_G\), the function
  \(\Luz_A \colon \Reals_+^V \to \Reals\) is a positive definite
  monotone gauge and, for every \(w \in \Reals_+^V\),
  \begin{equation}
    \label{eq:Luz-pdmg-bound}
    \alpha(G,w) \leq \Luz(A,w) \leq \norm[1]{w}.
  \end{equation}
\end{theorem}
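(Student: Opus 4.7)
The plan is to show that $\Luz_A = \Hdual_A$ whenever $A \in \Acal_G$ is nonnegative and then invoke \cref{cor:Hdual-pdmg-bound}, which already contains every assertion of the theorem with $\Hdual_A$ in place of $\Luz_A$. Since \cref{thm:Hdual-leq-Luz} gives $\Hdual_A \leq \Luz_A$, only the reverse inequality needs work.

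First I would check that the supremum in \cref{eq:def-Luz-alt} is finite and attained. Since $A \geq 0$ makes $I+\At$ both positive semidefinite and entrywise nonnegative, any $v \in \Reals_+^V$ with $(I+\At)v = 0$ satisfies $v_i \leq \sum_j(I+\At)_{ij}v_j = 0$, forcing $v = 0$; hence the concave quadratic in \cref{eq:def-Luz-alt} is coercive on $\Reals_+^V$ and admits a maximizer $x^*$. The KKT conditions read $x^* \geq 0$, $(I+\At)x^* \geq \sqrt{w}$, with coordinate-$i$ equality whenever $x_i^* > 0$, and a short calculation gives $\Luz(A,w) = \iprodt{\sqrt{w}}{x^*}$. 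Setting $S \coloneqq \setst{i \in V}{x_i^* > 0}$, the KKT equality on $S$ combined with $(I+\At)_{ii}=1$ and entrywise nonnegativity of $I+\At$ yields $x_i^* \leq \sqrt{w_i}$ for $i \in S$, so $S \subseteq \setst{i \in V}{w_i > 0}$.

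Now define $\tilde x \in \Reals_+^V$ by $\tilde x_i \coloneqq x_i^*/\sqrt{w_i}$ when $w_i > 0$ and $\tilde x_i \coloneqq 0$ otherwise, so that $\iprodt{w}{\tilde x} = \iprodt{\sqrt{w}}{x^*} = \Luz(A,w)$. It remains to prove that $\tilde x$ is feasible in the primal SDP \cref{eq:def-Hdual-A}, which by \cref{lem:eigen-swap} is equivalent to $\lambda_{\max}\paren{\Diag(\tilde x)(I+\At)} \leq 1$. Using that $\tilde x_i = 0$ for $i \notin S$ and $((I+\At)x^*)_i = \sqrt{w_i}$ for $i \in S$, one finds
\begin{equation*}
  \Diag(\tilde x)(I+\At)x^*
  = \Diag(\tilde x)\sqrt{w}
  = x^*,
\end{equation*}
so $x^*$ is a nonnegative eigenvector of the entrywise nonnegative matrix $N \coloneqq \Diag(\tilde x)(I+\At)$ with eigenvalue $1$. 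Since $N$ has zero rows outside $S$, its nonzero eigenvalues coincide with those of the $S \times S$ principal submatrix $N_{SS}$, for which $x^*|_S$ is a strictly positive eigenvector with eigenvalue $1$; hence the Collatz--Wielandt bound gives $\rho(N_{SS}) \leq 1$. Moreover $N_{SS}$ is similar (via conjugation by $\Diag(\sqrt{\tilde x}|_S)$) to a symmetric positive semidefinite matrix, so all its eigenvalues are real and nonnegative, and therefore $\lambda_{\max}(N) = \rho(N_{SS}) \leq 1$.

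Combining $\iprodt{w}{\tilde x} = \Luz(A,w)$ with the feasibility just established yields $\Hdual(A,w) \geq \Luz(A,w)$, and together with \cref{thm:Hdual-leq-Luz} this gives $\Luz_A = \Hdual_A$; the theorem then follows immediately from \cref{cor:Hdual-pdmg-bound}. The main obstacle is the Perron--Frobenius/Collatz--Wielandt step bounding $\lambda_{\max}(N)$, in which the entrywise nonnegativity of $A$ is essential; without it, $N$ need not be nonnegative, and the example recorded right after \cref{thm:Hdual-leq-Luz} shows that $\Hdual_A \leq \Luz_A$ can then be strict (with $\Luz_A$ even attaining $+\infty$).
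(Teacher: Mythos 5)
Your proposal is correct, but it takes a genuinely different route from the paper. The paper proves this theorem directly from the CQP formulation: the upper bound \(\Luz(A,w) \leq \norm[1]{w}\) comes from a scaling argument using \(1 \leq \qform{(I+\At)}{x}\) for unit nonnegative \(x\); sublinearity comes from rewriting \(\Luz_A(w)\) as a supremum of functions of \(w\) whose quadratic-in-\(\sqrt{w}\) term is shown concave via an AM--GM/superadditivity inequality for \(\oprodsym{\sqrt{w}}\); and monotonicity comes from the dual CQP \cref{eq:Luz-dual}. You instead prove the stronger identity \(\Luz_A = \Hdual_A\) (which is the paper's later \cref{thm:Luz-eq-Hdual}) and transfer everything from \cref{cor:Hdual-pdmg-bound}: given a KKT point \(x^*\) of \cref{eq:def-Luz-alt}, you build the rescaled point \(\tilde{x}\), verify it is feasible for the SDP \cref{eq:def-Hdual-A} via \cref{lem:eigen-swap}, Collatz--Wielandt on the nonnegative matrix \(\Diag(\tilde{x})(I+\At)\), and a diagonal similarity to a positive semidefinite matrix, and conclude \(\Luz(A,w) \leq \Hdual(A,w)\); combined with \cref{thm:Hdual-leq-Luz} this gives equality. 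Importantly, this is not circular: the paper's own proof of \cref{thm:Luz-eq-Hdual} \emph{uses} the present theorem (it needs \(\Luz_A\) to be a positive definite monotone gauge to apply \cref{item:gauge-duality}), whereas your KKT/Perron--Frobenius argument is independent of it and only invokes results established earlier (\cref{thm:Hdual-leq-Luz}, \cref{cor:Hdual-pdmg-bound}, \cref{eq:def-Luz-alt}, \cref{lem:eigen-swap}). So your route proves more, earlier: it yields a primal, constructive proof of \(\Luz_A = \Hdual_A\) (the paper gets it later by a dual-side Perron--Frobenius argument showing \(\Hoffman_A = \Luz_A^{\polar}\) and then dualizing), at the cost of needing attainment and KKT analysis for the CQP; what the paper's proof buys is a self-contained convex-analytic treatment of \(\Luz_A\) that feeds its subsequent gauge-duality machinery. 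Two small points you gloss over, both routine: attainment of the supremum (your coercivity claim needs the observation that \(\qform{(I+\At)}{x}\) has a positive minimum over unit vectors in \(\Reals_+^V\), which follows from your null-vector argument by compactness) and the degenerate case \(x^* = 0\) (e.g.\ \(w = 0\)), where \(\tilde{x} = 0\) is trivially feasible.
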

\begin{proof}
  The first inequality of~\cref{eq:Luz-pdmg-bound} is
  just~\cref{eq:alpha-leq-Luz}.
  Let us prove the second inequality in~\cref{eq:Luz-pdmg-bound}.
  In particular, this will show that \(\Luz_A\) is indeed real-valued.
  Let \(w \in \Reals_+^V\).
  For every \(x \in \Reals_+^V\) such that \(\norm[2]{x} = 1\), we have
  \(1 = \iprodt{x}{x} \leq \iprodt{x}{(I + \At)x}\),
  whence
  \begin{multline*}
    \max_{\mu \in \Reals_+}\sqbrac[\big]{
      2 \iprodt{\sqrt{w}}{(\mu x)}
      - \qform{(I + \At)}{(\mu x)}
    }
    \leq
    \max_{\mu \in \Reals_+} \mu\paren{2\iprodt{\sqrt{w}}{x}-\mu}
    =
    \paren[\big]{
      \iprodt{\sqrt{w}}{x}
    }^2
    \leq
    \norm[2]{\sqrt{w}}^2
    \norm[2]{x}^2
    =
    \norm[1]{w}.
  \end{multline*}
  Thus, by~\cref{eq:def-Luz-alt},
  \begin{align*}
    \Luz(A,w)
    &=
    \sup\setst{
      2 \iprodt{\sqrt{w}}{x}
      -
      \qform{(I+\At)}{x}
    }{
      x \in \Reals_+^V
    }
    \\
    &=
    \sup\setst[\bigg]{
      \sup_{\mu \geq 0}\ \sqbrac[\Big]{
        2 \iprodt{\sqrt{w}}{(\mu x)}
        -
        \qform{(I+\At)}{(\mu x)}
      }
    }{
      x \in \Reals_+^V,\,
      \norm[2]{x} = 1
    }
    \leq
    \norm[1]{w}.
  \end{align*}

Next we show that
\begin{equation}
  \label{eq:term-concavity}
  \text{for every \(x \in \Reals_+^V\), the function
    \(w \in \Reals_+^V \mapsto \qform{\Diag(x)(I +
      \At)\Diag(x)}{\sqrt{w}}\) is concave}.
\end{equation}
Let \(x \in \Reals_+^V\), and set
\(Z \coloneqq \Diag(x)(I + \At)\Diag(x)\).
Since the map from~\cref{eq:term-concavity} is clearly positively
homogeneous, it suffices to show that it is superlinear, that~is,
\begin{equation}
  \label{eq:term-concavity-aux0}
  \qform{Z}{\sqrt{w + z}}
  \geq
  \qform{Z}{\sqrt{w}}
  +
  \qform{Z}{\sqrt{z}}
  \qquad
  \forall w,z \in \Reals_+^V.
\end{equation}
Since \(A\) and \(x\) are nonnegative, so is \(Z\).
Hence, since the LHS in~\cref{eq:term-concavity-aux0} is
\(\trace(Z\oprodsym{\sqrt{w+z}})\) and the RHS is
\(\trace(Z\oprodsym{\sqrt{w}}) + \trace(Z\oprodsym{\sqrt{z}})\), it
suffices to prove that
\begin{equation}
  \label{eq:term-concavity-aux1}
  \oprodsym{\sqrt{w+z}}
  \geq
  \oprodsym{\sqrt{w}}
  +
  \oprodsym{\sqrt{z}}
  \qquad
  \forall w,z \in \Reals_+^V.
\end{equation}

Let \(w,z \in \Reals_+^V\) and let \(i,j \in V\).
By the AM--GM inequality,
\begin{align*}
  (w_i + z_i)(w_j + z_j)
  = w_i w_j + z_i z_j + w_i z_j + w_j z_i
  \geq w_i w_j + z_i z_j + 2\sqrt{w_i w_j z_i z_j}
  = \paren[\big]{\sqrt{w_i w_j} + \sqrt{z_i z_j}\,}^2.
\end{align*}
Hence,
\(\sqrt{w_i + z_i}\sqrt{w_j + z_j} \geq \sqrt{w_i w_j} + \sqrt{z_i
  z_j}\), which is the componentwise form
of~\cref{eq:term-concavity-aux1}.
This concludes our proof of~\cref{eq:term-concavity}.

Note that \(\Diag(\sqrt{w}\,)x = \Diag(x)\sqrt{w}\) for every
\(w,x \in \Reals^V\) such that \(w \geq 0\).
Therefore,
\begin{equation*}
  \Luz(A,w)
  =
  \sup\setst[\Big]{
    2\iprodt{w}{x}
    -
    \qform{\Diag(x)(I + \At)\Diag(x)}{\sqrt{w}}
  }{
    x \in \Reals_+
  }.
\end{equation*}
By \cref{eq:term-concavity}, we have just expressed \(\Luz_A\) as the
pointwise supremum of convex functions, whence \(\Luz_A\) is itself
also convex and thus sublinear.
It is also clear that \(\Luz_A\) is positively homogeneous.
Combined with~\cref{eq:alpha-leq-Luz}, we find that \(\Luz_A\) is a
positive definite gauge.

It only remains to prove that \(\Luz_A\) is monotone.
Let \(w,z \in \Reals_+^V\) be such that \(w \leq z\).
Considering the dual formulation of \(\Luz_A\) in
\cref{eq:Luz-dual}, we see that the feasible region for \(z\) is a
subset of the feasible region for~\(w\), since
\(\sqrt{w} \leq \sqrt{z}\).
Since~\cref{eq:Luz-dual} is a minimization problem, we conclude
\(\Luz(A,w) \leq \Luz(A,z)\).
\end{proof}

\begin{theorem}
\label{thm:Luz-eq-Hdual}
Let \(G = (V,E)\) be a graph, and let \(A \in \Acal_G\) be a
nonnegative generalized adjacency matrix of~\(G\).
Then, for every \(w \in \Reals_+^V\),
\[\Luz(A,w) = \Hdual(A,w).\]
\end{theorem}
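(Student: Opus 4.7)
The plan is to show the missing inequality $\Luz(A,w) \leq \Hdual(A,w)$, the reverse having been established in \cref{thm:Hdual-leq-Luz}. I would do this by exhibiting a feasible solution $\tilde{x}$ for the $\Hdual$ primal SDP~\cref{eq:def-Hdual-A} whose objective value equals $\Luz(A,w)$.

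Since $A \geq 0$, the matrix $I+\At$ is not only positive semidefinite but also \emph{componentwise} nonnegative; this is the property the whole argument hinges on. Convex Optimization Strong Duality applied to \cref{eq:def-Luz-alt,eq:Luz-dual} (both finite by \cref{thm:Luz-pdmg-bound}), combined with the KKT conditions on the dual minimizer, produces an $x^* \in \Reals_+^V$ with $(I+\At)x^* \geq \sqrt{w}$ componentwise and $x^*_i = 0$ whenever $\bigl((I+\At)x^*\bigr)_i > \sqrt{w_i}$; the componentwise nonnegativity of $I+\At$ further forces $x^*_i = 0$ at every $i$ with $w_i = 0$. Complementarity then yields $\iprodt{x^*}{(I+\At)x^*} = \iprodt{\sqrt{w}}{x^*}$ and hence $\Luz(A,w) = \iprodt{\sqrt{w}}{x^*}$. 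I would then set $\tilde{x}_i \coloneqq x^*_i/\sqrt{w_i}$ for $i$ with $w_i > 0$ and $\tilde{x}_i \coloneqq 0$ otherwise, so that $\iprodt{w}{\tilde{x}} = \iprodt{\sqrt{w}}{x^*} = \Luz(A,w)$ at once.

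The hard part is verifying that $\tilde{x}$ is feasible in~\cref{eq:def-Hdual-A}, i.e., $(I+\At)^{\half}\Diag(\tilde{x})(I+\At)^{\half} \preceq I$. By \cref{lem:eigen-swap} this is equivalent to $\Diag(\sqrt{\tilde{x}})(I+\At)\Diag(\sqrt{\tilde{x}}) \preceq I$, and since $\tilde{x}$ is supported on $S \coloneqq \setst{i \in V}{x^*_i > 0}$, one can restrict to the $S \times S$ principal submatrix. Using complementarity to rewrite $\sqrt{w_i} = \bigl((I+\At)x^*\bigr)_i$ for $i \in S$ and substituting $u_i \coloneqq \sqrt{\tilde{x}_i}\,h_i$, the condition boils down to
\[
  \sum_{i,j \in S}(I+\At)_{ij}\,\frac{x^*_j}{x^*_i}\,u_i^2 \;\geq\; \sum_{i,j \in S}(I+\At)_{ij}\,u_i u_j \qquad \forall u \in \Reals^S.
\]
I would prove this by symmetrizing the left-hand side over $(i,j) \leftrightarrow (j,i)$ and invoking AM-GM on each unordered pair:
\[
  (I+\At)_{ij}\paren[\Big]{\frac{x^*_j}{x^*_i}\,u_i^2 + \frac{x^*_i}{x^*_j}\,u_j^2} \;\geq\; 2(I+\At)_{ij}\,\abs{u_i u_j} \;\geq\; 2(I+\At)_{ij}\,u_i u_j.
\]
This step uses precisely $(I+\At)_{ij} \geq 0$, which is where the hypothesis $A \geq 0$ enters essentially; dropping it breaks the argument, matching the observation after \cref{thm:Hdual-leq-Luz} that in general the gap between $\Hdual$ and $\Luz$ can be arbitrarily large (and $\Luz$ may even be $+\infty$).
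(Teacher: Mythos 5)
Your proposal is correct, but it follows a genuinely different route from the paper's proof. You establish the missing inequality \(\Luz(A,w) \leq \Hdual(A,w)\) directly on the primal side: a KKT point \(x^*\) of the CQP~\cref{eq:def-Luz-alt}, rescaled componentwise by \(\sqrt{w}\), is shown to be feasible in the primal SDP~\cref{eq:def-Hdual-A} with objective value \(\Luz(A,w)\). All the ingredients check out: finiteness and attainment follow from \cref{thm:Luz-pdmg-bound} and the (linearly constrained) CQP duality already recorded in~\cref{eq:Luz-dual}; the entrywise nonnegativity of \(I+\At\) indeed forces \(x^*_i=0\) when \(w_i=0\) (since \((I+\At)_{ii}=1\)); and your feasibility verification — passing to \(\Diag(\sqrt{\tilde x}\,)(I+\At)\Diag(\sqrt{\tilde x}\,) \preceq I\) via \cref{lem:eigen-swap}, using complementarity to write \(\sqrt{w_i}=\bigl((I+\At)x^*\bigr)_i\) on the support, and closing with the symmetrized AM--GM estimate — is a sound weighted diagonal-dominance argument, valid precisely because \((I+\At)_{ij}\geq 0\). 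The paper instead proves the gauge identity \(\Hoffman_A=\Luz_A^{\polar}\) in~\cref{eq:Hoffman-eq-Luz-dual}: one inequality comes from \cref{thm:Hdual-leq-Luz,thm:Hdual-is-Hoffman-dual} together with \cref{le:polar-gauge-reversal}, and the other is obtained by applying the Perron--Frobenius theorem to the nonnegative matrix \((I+\At)\Diag(z)\) to produce a suitable eigenvector, yielding a point of the unit convex corner of \(\Luz_A\) that attains \(\Hoffman(A,z)\); the theorem then follows by taking gauge duals, using \(\Luz_A^{\polar\polar}=\Luz_A\) from \cref{item:gauge-duality}. Both arguments exploit \(A\geq 0\) in closely related ways — Perron--Frobenius in the paper, your AM--GM inequality being essentially the elementary estimate underlying such Perron-type bounds — but your proof is more elementary and self-contained (no Perron--Frobenius, no further use of the gauge-duality machinery beyond \cref{thm:Hdual-leq-Luz}), and it exhibits an explicit optimal primal solution of~\cref{eq:def-Hdual-A}; the paper's detour buys the identity \(\Hoffman_A=\Luz_A^{\polar}\) explicitly en route, though that identity also follows from your statement a posteriori by dualizing.
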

\begin{proof}
  By \cref{thm:Luz-pdmg-bound}, \(\Luz_A\) is a positive definite
  monotone gauge.
  We first prove that
  \begin{equation}
    \label{eq:Hoffman-eq-Luz-dual}
    \Hoffman_A(z)
    =
    \Luz_A^{\polar}(z)
    \qquad
    \forall z \in \Reals_+^V.
  \end{equation}

  To prove~`\(\geq\)' in~\cref{eq:Hoffman-eq-Luz-dual}, first note
  that \(\Hoffman_A^{\polar}(z) = \Hdual_A(z) \leq \Luz_A(z)\), using
  \cref{thm:Hdual-leq-Luz,thm:Hdual-is-Hoffman-dual}, then apply
  \cref{le:polar-gauge-reversal}, \cref{thm:Hoffman-pdmg-bound}, and
  \cref{item:gauge-duality}.
  For the reverse inequality, let \(z \in \Reals_+^V\), and we shall
  begin by proving that there exists \(y \in \Reals^V\) such that
  \begin{subequations}
    \label{eq:Hoffman-geq-Luz-dual-aux0}
    \begin{gather}
      \label{eq:Hoffman-geq-Luz-dual-aux1}
      \norm[2]{y} = 1,
      \\
      \label{eq:Hoffman-geq-Luz-dual-aux2}
      \Hoffman(A,z)
      =
      \iprodt{y}{
        (I + \At)^{\half}\Diag(z)(I + \At)^{\half}y
      },
      \\
      \label{eq:Hoffman-geq-Luz-dual-aux3}
      (I + \At)^{\half}y \geq 0.
    \end{gather}
  \end{subequations}
  Note that the matrix \((I + \At)\Diag(z)\) is nonnegative.
  Hence, by the Perron-Frobenius Theorem (see
  \cite[Theorem~8.3.1]{HornJ90a}), there exists a nonzero
  \(p \in \Reals_+^V\) such that
  \begin{equation}
    \label{eq:PF-vector}
    (I + \At)\Diag(z)p
    =
    \lambda_{\max}((I + \At)\Diag(z))p.
  \end{equation}
  Set \(\bar{y} \coloneqq (I + \At)^{\MPhalf}p\) and
  \(y \coloneqq \bar{y}/\norm[2]{\bar{y}}\), so that
  \cref{eq:Hoffman-geq-Luz-dual-aux1} holds.
  By \cref{eq:PF-vector}, we have \(p \in \Img(I + \At)\), so that
  \begin{equation*}
    (I+\At)^{\half}\bar{y} = (I + \At)^{\half}
    (I + \At)^{\MPhalf}p = p \geq 0,
  \end{equation*}
  so~\cref{eq:Hoffman-geq-Luz-dual-aux3} holds.
  Furthermore, \cref{lem:eigen-swap,cor:Hoffman-swapped} imply that
  \[
    \lambda_{\max}((I + \At)\Diag(z))
    = \lambda_{\max}\paren*{(I + \At)^{\half}\Diag(z)(I + \At)^{\half}}
    = \Hoffman(A,z).
  \]
  Hence, by applying \((I + \At)^{\MPhalf}\) to both sides of
  \cref{eq:PF-vector} we conclude that
  \[
    \paren[\big]{
      (I + \At)^{\half}\Diag(z)(I + \At)^{\half}
    }
    \bar{y}
    =
    \Hoffman(A,z) \bar{y},
  \]
  whence \cref{eq:Hoffman-geq-Luz-dual-aux2} follows.

  Recall that by \cref{eq:def-polar-gauge}, we have that
  \(\Luz_A^{\polar}(z) = \max_{w \in \Ccal} \iprodt{w}{z}\), where
  \begin{equation*}
    \Ccal \coloneqq \setst{w \in \Reals_+^V}{\Luz_A(w) \leq 1}.
  \end{equation*}
  Define
  \(
  w
  \coloneqq
  \diag\paren[\big]{
    (I + \At)^{\half}\oprodsym{y}(I + \At)^{\half}
  }.
  \)
  Note that~\(y\) is a feasible solution for the
  dual~\cref{eq:Luz-dual} of~\(\Luz(A,w)\)
  by~\cref{eq:Hoffman-geq-Luz-dual-aux3}, with objective value
  \(\norm[2]{y}^2 = 1\) by~\cref{eq:Hoffman-geq-Luz-dual-aux1}.
  Hence, \(w \in \Ccal\), and by~\cref{eq:Hoffman-geq-Luz-dual-aux2}
  we get
  \begin{equation*}
    \Hoffman(A,z)
    =
    \iprodt{y}{(I + \At)^{\half}\Diag(z)(I + \At)^{\half}y}
    =
    \iprodt{w}{z}
    \leq
    \Luz(A,z)^{\polar}.
  \end{equation*}
  This concludes the proof of~`\(\leq\)'
  in~\cref{eq:Hoffman-eq-Luz-dual}, and hence that
  of~\cref{eq:Hoffman-eq-Luz-dual} itself.

  By applying the gauge dual to both sides
  of~\cref{eq:Hoffman-eq-Luz-dual}, and using
  \cref{thm:Hdual-is-Hoffman-dual,%
    thm:Luz-pdmg-bound,item:gauge-duality}, we find that
  \(\Hdual_A = \Hoffman_A^{\polar} = \Luz_A^{\polar\polar} = \Luz_A\).
\end{proof}

As a consequence, the dual SDP~\cref{eq:Hdual-dual-SDP} always has an
optimal solution that is rank-one if \(A\) is a nonnegative
generalized adjacency matrix:
\begin{corollary}
  Let \(G = (V,E)\) be a graph, and let \(A \in \Acal_G\) be a
  nonnegative generalized adjacency matrix of~\(G\).
  Then for every \(w \in \Reals_+^V\), the dual
  SDP~\cref{eq:Hdual-dual-SDP} has a rank-one optimal solution.
\end{corollary}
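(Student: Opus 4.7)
The plan is to combine two results established earlier in this section. By \cref{thm:Luz-eq-Hdual}, since $A$ is a nonnegative generalized adjacency matrix, we have the equality $\Hdual(A,w) = \Luz(A,w)$; meanwhile, \cref{thm:Hdual-leq-Luz} shows that any feasible vector for the dual CQP of $\Luz(A,w)$ lifts, via the outer-product map $y \mapsto \oprodsym{y}$, to a \emph{rank-one} feasible matrix for the dual SDP~\cref{eq:Hdual-dual-SDP} of $\Hdual(A,w)$ with the \emph{same} objective value. Therefore it suffices to exhibit an optimal solution of the dual CQP of $\Luz(A,w)$.

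First I would observe that by \cref{thm:Luz-pdmg-bound}, $\Luz(A,w) \leq \norm[1]{w} < +\infty$, so by~\cref{eq:Luz-dual}, the dual CQP attains its minimum: there exists $y^* \in \Reals^V$ with $(I + \At)^{\half} y^* \geq \sqrt{w}$ and $\norm[2]{y^*}^2 = \Luz(A,w)$. Setting $Y^* \coloneqq \oprodsym{y^*}$, \cref{thm:Hdual-leq-Luz} guarantees that $Y^*$ is feasible in~\cref{eq:Hdual-dual-SDP}, and the objective value $\trace(Y^*) = \norm[2]{y^*}^2 = \Luz(A,w) = \Hdual(A,w)$ matches the optimal value of that SDP. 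Hence $Y^*$ is an optimal solution, and by construction it has rank at most one; it has rank exactly one whenever $w \neq 0$ (the case $w = 0$ being handled trivially by the zero matrix, which is itself rank zero and thus rank at most one).

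No real obstacle is anticipated: the heavy lifting has already been done in the proofs of \cref{thm:Hdual-leq-Luz,thm:Luz-eq-Hdual}, so the argument amounts to invoking these two facts together with the attainment of the minimum in the $\Luz$-dual. The latter is in turn a consequence of the finiteness of $\Luz(A,w)$ for nonnegative $A$, guaranteed by \cref{thm:Luz-pdmg-bound}.
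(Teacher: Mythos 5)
Your proposal is correct and follows essentially the same route as the paper: take an optimal solution \(y\) of the dual CQP~\cref{eq:Luz-dual}, lift it to the rank-one matrix \(\oprodsym{y}\), which is feasible in~\cref{eq:Hdual-dual-SDP} by \cref{thm:Hdual-leq-Luz} and optimal by \cref{thm:Luz-eq-Hdual}. Your added remarks on attainment (via finiteness of \(\Luz(A,w)\) from \cref{thm:Luz-pdmg-bound}) and on the trivial case \(w=0\) are fine details the paper leaves implicit.
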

\begin{proof}
  Let \(y \in \Reals^V\) be an optimal solution for the dual
  CQP~\cref{eq:Luz-dual}.
  \cref{thm:Hdual-leq-Luz} states that \(yy^\transp\) is feasible in
  \cref{eq:Hdual-dual-SDP}, and \cref{thm:Luz-eq-Hdual} implies it is
  optimal, since
  \(\trace(\oprodsym{y}) = \norm[2]{y}^2 = \Luz(A,w)
  = \Hdual(A,w)\).
\end{proof}

\section{Relation with The Lovász Theta Function and Its Variants}
\label{sec:theta}

Let \(G = (V,E)\) be a graph.
The \emph{theta body} of~\(G\) is defined as the projection
\begin{equation}
  \label{eq:def-TH}
  \THbody(G)
  \coloneqq
  \setst*{
    x \in \Reals_+^V
  }{
    \exists X \in \Psd{V},\,
    \diag(X) = x,\,
    \begin{bmatrix}
      1        & x^{\transp}\, \\
      x & X
    \end{bmatrix}
    \succeq 0,\,
    X_{ij} = 0,\,\forall ij \in E
  }.
\end{equation}
Since \(\THbody(G)\) is a linear projection of a compact convex set,
\(\THbody(G)\) is compact and convex.
It is not hard to verify that \(\THbody(G) \subseteq [0,1]^V\) is
lower-comprehensive and that \(\frac{1}{n}\ones \in \THbody(G)\),
which implies that \(\frac{1}{2n}\ones\) is in the interior of
\(\THbody(G)\).
Thus, \(\THbody(G)\) is a convex corner.
Hence, using \cref{item:gauge-construction-dual}, we may finally
define the following positive definite monotone gauge:
\begin{equation*}
  \theta(G,w)
  \coloneqq
  \max\setst[\big]{
    \iprodt{w}{x}
  }{
    x \in \THbody(G)
  },
  \qquad
  \forall w \in \Reals_+^V.
\end{equation*}
The reader is referred to~\cite[Theorem~14]{CarliT17a} for the
equivalence with other definitions of~\(\theta(G,w)\).
Alternative sources for~\(\theta\) include
\cite{Lovasz79a,GroetschelLS86a,GroetschelLS93a,Knuth94a}.
It is known (see~\cite[Corollary~3.4]{GroetschelLS86a}) that
\begin{equation}
  \label{eq:abl-TH}
  \abl(\THbody(G)) = \THbody(\overline{G}),
\end{equation}
so that by \cref{item:gauge-construction-dual} we have
\begin{equation}
  \label{eq:dual-of-theta}
  \theta_G^{\polar} = \theta_{\,\overline{G}}.
\end{equation}

It was already mentioned in \cref{eq:theta-best-Luz-pre} that
\begin{equation}
  \label{eq:theta-best-Luz}
  \theta(G,w) = \min_{A \in \Acal_G} \Luz(A,w),
\end{equation}
and \(\theta(G,w)\) is similarly related
(see~\cite[Theorem~33]{Knuth94a}) to the Hoffman bounds~\(\Hoffman_A\):
\begin{equation}
  \label{eq:theta-bar-Hoffman}
  \theta(\overline{G},w) = \max_{A \in \Acal_G} \Hoffman(A,w).
\end{equation}
We remark that \cite[Theorem~33]{Knuth94a} essentially proves that,
for \(W \coloneqq \Diag(w)\), we have
\begin{equation*}
  \theta(\overline{G},w)
  =
  \max \setst[\Big]{
    \lambda_{\max}\paren[\big]{W^{\half}(I + \mu A)W^{\half}}
  }{
    A \in \Acal_G,\,
    0 \leq \mu \leq \tfrac{1}{-\lambda_{\min}(A)}
  },
\end{equation*}
from which one can get \cref{eq:theta-bar-Hoffman} by using convexity
of the function
\(\mu \in \Reals \mapsto \lambda_{\max}(W^{\half}(I + \mu
A)W^{\half})\).

The use of `\(\min\)' in~\cref{eq:theta-best-Luz} means that there
exists \(A \in \Acal_G\) such that \(\theta(G,w) = \Luz(A,w)\), and
analogously for \cref{eq:theta-bar-Hoffman}.
The same applies to the next result involving~\(\Hdual\):
\begin{theorem}
  \label{thm:theta-best-Upsilon}
  Let \(G = (V,E)\) be a graph.
  Then, for every \(w \in \Reals_+^V\),
  \begin{equation}
    \label{eq:theta-best-Upsilon}
    \theta(G,w)
    =
    \min_{A \in \Acal_G} \Hdual(A,w).
  \end{equation}
\end{theorem}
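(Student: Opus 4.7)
The plan is to combine two ingredients from earlier in the paper: \cref{thm:Hdual-is-Hoffman-dual}, which identifies $\Hdual_A = \Hoffman_A^{\polar}$, and the known ``maximum'' description $\theta_{\overline{G}}(w) = \max_{A \in \Acal_G}\Hoffman(A,w)$ in~\cref{eq:theta-bar-Hoffman}. The direction ``$\theta(G,w) \leq \inf_A \Hdual(A,w)$'' will come by transporting~\cref{eq:theta-bar-Hoffman} across gauge duality, using~\cref{le:polar-gauge-reversal} together with the self-duality $\theta_G^{\polar} = \theta_{\overline{G}}$ recorded in~\cref{eq:dual-of-theta}; the reverse inequality, together with attainment of the minimum, will follow from the already-established inequality $\Hdual_A \leq \Luz_A$ of~\cref{thm:Hdual-leq-Luz} combined with the analogous Luz-side identity $\theta(G,w) = \min_{A \in \Acal_G}\Luz(A,w)$ recalled in~\cref{eq:theta-best-Luz}.

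To execute the first direction, I would fix $A \in \Acal_G$ and observe that by~\cref{eq:theta-bar-Hoffman} we have $\Hoffman_A(z) \leq \theta_{\overline{G}}(z)$ for every $z \in \Reals_+^V$. Both sides are positive definite monotone gauges, by \cref{thm:Hoffman-pdmg-bound} on the left and by the paragraph following~\cref{eq:def-TH} on the right, so \cref{le:polar-gauge-reversal} yields $\theta_{\overline{G}}^{\polar}(w) \leq \Hoffman_A^{\polar}(w)$ for every $w \in \Reals_+^V$. Applying~\cref{eq:dual-of-theta} to~$\overline{G}$ (and using that complementation is an involution on graphs) identifies the left-hand side with $\theta_G(w)$, while~\cref{thm:Hdual-is-Hoffman-dual} identifies the right-hand side with $\Hdual(A,w)$. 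Taking the infimum over $A \in \Acal_G$ gives $\theta(G,w) \leq \inf_{A \in \Acal_G}\Hdual(A,w)$.

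For the reverse inequality and the attainment of the minimum, I would invoke~\cref{eq:theta-best-Luz} to pick $\hat{A} \in \Acal_G$ with $\Luz(\hat{A},w) = \theta(G,w)$, and then apply~\cref{thm:Hdual-leq-Luz} at $\hat{A}$ to obtain $\Hdual(\hat{A},w) \leq \Luz(\hat{A},w) = \theta(G,w)$. Together with the previous paragraph this forces $\Hdual(\hat{A},w) = \theta(G,w)$, simultaneously closing the inequality and justifying the use of `$\min$' rather than `$\inf$' in~\cref{eq:theta-best-Upsilon}. I do not expect any technical obstacle: the gauge-duality machinery of~\cref{sec:gauge-duality} renders the first direction essentially mechanical once~\cref{eq:theta-bar-Hoffman} is invoked, and the second is a one-line corollary of~\cref{thm:Hdual-leq-Luz} and~\cref{eq:theta-best-Luz}; the only substantive observation is that these two companion identities together pin~$\theta(G,\cdot)$ between $\inf_A \Hdual_A$ and $\min_A \Luz_A$, forcing the desired equality.
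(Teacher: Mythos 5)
Your proposal is correct and follows essentially the same route as the paper: the paper's proof is exactly the chain \(\Luz_A \geq \Hdual_A = \Hoffman_A^{\polar} \geq \theta_{\,\overline{G}}^{\polar} = \theta_G\) obtained from \cref{thm:Hdual-leq-Luz,thm:Hdual-is-Hoffman-dual}, \cref{eq:theta-bar-Hoffman,le:polar-gauge-reversal}, and \cref{eq:dual-of-theta}, closed by \cref{eq:theta-best-Luz} to force equality and attainment. Your write-up simply makes the two directions and the attainment argument explicit; there is no substantive difference.
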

\begin{proof}
  \Cref{thm:Hdual-leq-Luz,thm:Hdual-is-Hoffman-dual},
  \cref{eq:theta-bar-Hoffman,le:polar-gauge-reversal}, and
  \cref{eq:dual-of-theta} imply that
  \(
    \Luz_A
    \geq
    \Hdual_A
    =
    \Hoffman_A^{\polar}
    \geq
    \theta_{\,\overline{G}}^{\polar}
    =
    \theta_G
  \)
  for every \(A \in \Acal_G\).
  \Cref{eq:theta-best-Luz} then finishes the proof, while also proving
  that there exists \(A \in \Acal_G\) such that equality holds.
\end{proof}

\Cref{eq:theta-bar-Hoffman,eq:theta-best-Upsilon}, together with
\cref{thm:Hoffman-pdmg-bound,cor:Hdual-pdmg-bound}, show that
\(\theta\) may be expressed using optimization over positive definite
monotone gauges, each of which can be expressed as linear optimization
over some convex corners, since
\begin{gather*}
  \Hdual(A,w) = \max_{x \in \Ucal_A} \iprodt{w}{x},
  \qquad
  \forall w \in \Reals_+^V,
  \\
  \Hoffman(A,w) = \max_{x \in \Hcal_A} \iprodt{w}{x},
  \qquad
  \forall w \in \Reals_+^V.
\end{gather*}
Moreover, by \cref{thm:Hoffman-max-convex},
\begin{equation}
  \label{eq:abl-U}
  \abl(\Ucal_A)
  =
  \setst{
    x \in \Reals_+^V
  }{
    \iprodt{w}{x} \leq 1,\,
    \forall w \in \Ucal_A
  }
  =
  \setst{
    x \in \Reals_+^V
  }{
    \Hdual_A(x) \leq 1
  }
  =
  \Hcal_A.
\end{equation}
These provide alternative descriptions of \(\THbody(G)\) via such
convex corners.
We shall rely on the following fact, for every family \(\Fcal\) of
subsets of~\(\Reals_+^V\):
\begin{equation}
  \begin{split}
    \label{eq:abl-of-union}
    \abl\paren[\Big]{\bigcup \Fcal}
    &=
    \setst[\big]{
      x \in \Reals_+^V
    }{
      \textstyle
      \iprodt{y}{x} \leq 1,\,
      \forall y \in \bigcup \Fcal
    }
    \\
    &=
    \setst[\big]{
      x \in \Reals_+^V
    }{
      \iprodt{y}{x} \leq 1,\,
      \forall S \in \Fcal,\,
      \forall y \in S
    }
    \\
    &=
    \setst[\big]{
      x \in \Reals_+^V
    }{
      x \in \abl(S),\,
      \forall S \in \Fcal
    }\\
    &=
    \bigcap_{S \in \Fcal} \abl(S).
  \end{split}
\end{equation}
\begin{proposition}
  \label{prop:theta-body-description}
  Let \(G = (V,E)\) be a graph.
  Then
  \begin{align}
    \label{eq:theta-body-description}
    \THbody(G) &= \bigcap_{A \in \Acal_G} \Ucal_A,\\
    \label{eq:theta-bar-body-description}
    \THbody(\overline{G}) &= \bigcup_{A \in \Acal_G} \Hcal_A.
  \end{align}
\end{proposition}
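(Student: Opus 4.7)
The plan is to exploit the one-to-one correspondence between positive definite monotone gauges on~$\Reals_+^V$ and convex corners (given by taking unit convex corners) in order to translate the pointwise minimum/maximum descriptions of $\theta_G$ and $\theta_{\overline G}$ from~\cref{thm:theta-best-Upsilon,eq:theta-bar-Hoffman} into the union/intersection descriptions of the associated convex corners that appear in the statement.

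The first step is to identify all the relevant unit convex corners. From the definition of~$\theta_G$ as the maximum of $\iprodt{w}{\cdot}$ over $\THbody(G)$, together with~\cref{item:gauge-construction-dual}, the unit convex corner of $\theta_G$ is $\abl(\THbody(G)) = \THbody(\overline{G})$, where the last equality is~\cref{eq:abl-TH}; applying the same argument to~$\overline G$, the unit convex corner of $\theta_{\overline G}$ is $\THbody(G)$. On the other hand, \cref{thm:Hoffman-max-convex} states that $\Hcal_A$ is the unit convex corner of~$\Hdual_A$, and so by~\cref{thm:Hdual-is-Hoffman-dual,item:polar-gauge-closed,eq:abl-U} the unit convex corner of~$\Hoffman_A = \Hdual_A^{\polar}$ is $\abl(\Hcal_A) = \Ucal_A$.

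The second step is to rewrite $\theta_G = \min_{A \in \Acal_G}\Hdual_A$ (from~\cref{thm:theta-best-Upsilon}) and $\theta_{\overline G} = \max_{A \in \Acal_G}\Hoffman_A$ (from~\cref{eq:theta-bar-Hoffman}) at the level of $1$-sublevel sets. For any $x \in \Reals_+^V$, we have $\theta_G(x) \leq 1$ iff there exists $A \in \Acal_G$ with $\Hdual_A(x) \leq 1$, i.e., iff $x \in \bigcup_{A \in \Acal_G}\Hcal_A$; this yields~\cref{eq:theta-bar-body-description}. Dually, $\theta_{\overline G}(x) \leq 1$ iff $\Hoffman_A(x) \leq 1$ for every $A \in \Acal_G$, i.e., iff $x \in \bigcap_{A \in \Acal_G} \Ucal_A$; this yields~\cref{eq:theta-body-description}. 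Either of the two equations can alternatively be derived from the other by applying the antiblocker and invoking~\cref{eq:abl-TH,eq:abl-U,eq:abl-of-union}. There is no real obstacle here: the proof is a routine chase through the duality bookkeeping developed in the previous sections, and the only point requiring mild attention is to keep track of which of $\Hdual_A$ and $\Hoffman_A$ has $\Hcal_A$, respectively $\Ucal_A$, as its unit convex corner.
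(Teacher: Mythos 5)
Your proposal is correct and follows essentially the paper's own route: the paper likewise obtains \cref{eq:theta-bar-body-description} by comparing the \(1\)-sublevel set of \(\theta_G=\min_{A\in\Acal_G}\Hdual_A\) (from \cref{thm:theta-best-Upsilon}) with the unit convex corners \(\Hcal_A\), and then deduces \cref{eq:theta-body-description} by exactly the antiblocker computation you mention as an alternative, via \cref{eq:abl-TH,eq:abl-of-union,eq:abl-U,eq:abl-abl}. Your direct derivation of \cref{eq:theta-body-description} from \cref{eq:theta-bar-Hoffman}, using that \(\Ucal_A\) is the unit convex corner of \(\Hoffman_A\), is a valid minor variant of the same duality bookkeeping rather than a genuinely different argument.
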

\begin{proof}
  Let \(w \in \Reals_+^V\).
  \Cref{thm:theta-best-Upsilon} implies that \(\theta(G,w) \leq 1\)
  holds if and only if there exists \(A \in \Acal_G\) such that
  \(\Hdual(A,w) \leq 1\).
  Since
  \(\THbody(\overline{G}) = \setst{w \in \Reals_+^V}{\theta(G,w) \leq
    1}\) by \cref{eq:abl-TH} and
  \(\Hcal_A = \setst{w \in \Reals_+^V}{\Hdual(A,w) \leq 1}\), we have
  \cref{eq:theta-bar-body-description}.
  Then \cref{eq:abl-TH,eq:abl-of-union,eq:abl-U,eq:abl-abl} finish the
  proof:
  \[
    \THbody(G)
    = \abl(\THbody(\overline{G}))
    = \abl\paren[\Bigg]{\bigcup_{A \in \Acal_G} \Hcal_A}
    = \bigcap_{A \in \Acal_G} \abl(\Hcal_A)
    = \bigcap_{A \in \Acal_G} \Ucal_A.
    \qedhere
  \]
\end{proof}
\Cref{eq:theta-bar-body-description} is slightly unusual since it is
not \emph{a~priori} clear that the union of convex sets in the RHS is
convex.

Next we discuss the variants \(\theta'\) and~\(\theta^+\) of a graph
\(G = (V,E)\).
We first define \(\THbody^+(G)\) by relaxing the constraint
``\(X_{ij} = 0\)'' in \cref{eq:def-TH} to ``\(X_{ij} \leq 0\)'', for
each edge \(ij \in E\).
Next we define \(\THbody'(G)\) by adding to \cref{eq:def-TH} the
constraint \(X_{ij} \geq 0\) for each \(i,j \in V\).
The two resulting sets can be verified to be convex corners,
analogously to~\(\THbody(G)\).
Then the variants \(\theta'\) and \(\theta^+\), which are also
positive definite monotone gauges by
\cref{item:gauge-construction-dual}, are defined as
\begin{gather}
  \theta'(G,w)
  \coloneqq
  \max\setst[\big]{
    \iprodt{w}{x}
  }{
    {x \in \THbody'(G)}
  }
  \qquad
  \forall w \in \Reals_+^V,
  \\
  \theta^+(G,w)
  \coloneqq
  \max\setst{
    \iprodt{w}{x}
  }{
    x \in \THbody^+(G)
  }
  \qquad
  \forall w \in \Reals_+^V.
\end{gather}
It is well known that
\begin{equation}
  \label{eq:abl-TH'}
  \abl\paren[\big]{\THbody'(G)}
  =
  \THbody^+(\overline{G}),
\end{equation}
so that by \cref{item:gauge-construction-dual} we have
\begin{equation}
  \label{eq:dual-of-theta'}
  \theta'(G,\cdot)^{\polar}
  =
  \theta^+(\overline{G},\cdot).
\end{equation}

The previous relations \cref{eq:theta-best-Luz,eq:theta-bar-Hoffman}
may be adapted to \(\theta'\) and \(\theta^+\) by restricting the
range of the maxima/minima over \emph{nonnegative} generalized
adjacency matrices.
For a graph \(G\), denote
\begin{equation}
  \label{eq:def-A-nonneg}
  \Acal_G^+
  \coloneqq
  \setst{A \in \Acal_G}{A \geq 0}.
\end{equation}
Then \cite[Corollary~29]{CarliT17a} shows that
\begin{equation}
  \label{eq:theta+-best-Luz}
  \theta^+(G,w) = \min_{A \in \Acal_G^+} \Luz(A,w),
\end{equation}
and it is well known (see \cite[Proposition~21]{CarliT17a} and the
remark following \cref{eq:theta-bar-Hoffman}) that
\begin{equation}
  \label{eq:theta'-bar-Hoffman}
  \theta'(\overline{G},w) = \max_{A \in \Acal_G^+} \Hoffman(A,w).
\end{equation}

Let us now state a counterpart to \cref{thm:theta-best-Upsilon}:
\begin{theorem}
  \label{thm:theta+-best-Upsilon}
  Let \(G = (V,E)\) be a graph.
  Then, for every \(w \in \Reals_+^V\),
  \begin{equation}
    \label{eq:theta+-best-Upsilon}
    \theta^+(G,w)
    =
    \min_{A \in \Acal_G^+} \Hdual(A,w).
  \end{equation}
\end{theorem}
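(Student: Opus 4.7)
The plan is to mirror the proof of \cref{thm:theta-best-Upsilon}, but using the nonnegative-generalized-adjacency-matrix counterparts of the theta-body characterizations: namely \cref{eq:theta+-best-Luz} (which expresses \(\theta^+\) as a \(\min\) of \(\Luz_A\) over \(A \in \Acal_G^+\)) and \cref{eq:theta'-bar-Hoffman} (which expresses \(\theta'(\overline{G},\cdot)\) as a \(\max\) of \(\Hoffman_A\) over \(A \in \Acal_G^+\)), combined with the duality \(\theta'(\overline{G},\cdot)^{\polar} = \theta^+(G,\cdot)\) that follows from \cref{eq:dual-of-theta'} applied to \(\overline{G}\).

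Concretely, I would first establish the upper bound \(\min_{A \in \Acal_G^+} \Hdual(A,w) \leq \theta^+(G,w)\) as the easier direction: for every \(A \in \Acal_G^+\), \cref{thm:Hdual-leq-Luz} gives \(\Hdual(A,w) \leq \Luz(A,w)\), so taking the infimum over \(A \in \Acal_G^+\) and invoking \cref{eq:theta+-best-Luz} yields the claim (and in particular the infimum is attained, justifying `\(\min\)').

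For the reverse inequality, I would argue by gauge duality. Fix \(A \in \Acal_G^+\). By \cref{eq:theta'-bar-Hoffman} we have \(\Hoffman_A(z) \leq \theta'(\overline{G},z)\) for every \(z \in \Reals_+^V\). Both sides are positive definite monotone gauges (the left by \cref{thm:Hoffman-pdmg-bound}, the right by \cref{item:gauge-construction-dual} applied to \(\THbody'(\overline{G})\)), so \cref{le:polar-gauge-reversal} yields \(\theta'(\overline{G},\cdot)^{\polar} \leq \Hoffman_A^{\polar}\). By \cref{eq:dual-of-theta'} (with \(\overline{G}\) in place of \(G\)), the left-hand side equals \(\theta^+(G,\cdot)\), while the right-hand side is \(\Hdual_A\) by \cref{thm:Hdual-is-Hoffman-dual}. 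Hence \(\theta^+(G,w) \leq \Hdual(A,w)\) for every \(A \in \Acal_G^+\), and taking the infimum over such \(A\) finishes the proof.

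There is no real obstacle: the only thing to be careful about is bookkeeping the \(G \leftrightarrow \overline{G}\) swap correctly when invoking \cref{eq:dual-of-theta'} and \cref{eq:theta'-bar-Hoffman}, since the `primed' and `plus' variants exchange roles under complementation. One could package the whole argument as the single chain
\[
\theta^+(G,w)
\;=\;
\min_{A \in \Acal_G^+} \Luz(A,w)
\;\geq\;
\min_{A \in \Acal_G^+} \Hdual(A,w)
\;=\;
\min_{A \in \Acal_G^+} \Hoffman_A^{\polar}(w)
\;\geq\;
\theta'(\overline{G},\cdot)^{\polar}(w)
\;=\;
\theta^+(G,w),
\]
forcing equality throughout and simultaneously showing that the infimum in \cref{eq:theta+-best-Upsilon} is attained.
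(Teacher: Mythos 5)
Your proof is correct, but it takes a genuinely different route from the paper's. The paper's own proof is a one-liner: it combines \cref{eq:theta+-best-Luz} with \cref{thm:Luz-eq-Hdual}, which asserts the \emph{equality} \(\Luz(A,w) = \Hdual(A,w)\) for every nonnegative \(A \in \Acal_G^+\), so that the minimum of \(\Hdual_A\) over \(\Acal_G^+\) is literally the same minimization as in \cref{eq:theta+-best-Luz}. You instead use only the weaker inequality \(\Hdual_A \leq \Luz_A\) from \cref{thm:Hdual-leq-Luz} and close the loop by gauge duality, via \(\Hdual_A = \Hoffman_A^{\polar}\) (\cref{thm:Hdual-is-Hoffman-dual}), the identity \cref{eq:theta'-bar-Hoffman}, the reversal lemma \cref{le:polar-gauge-reversal}, and \cref{eq:dual-of-theta'} applied to \(\overline{G}\) (your \(G \leftrightarrow \overline{G}\) bookkeeping is right). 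This is exactly the strategy of the paper's proof of \cref{thm:theta-best-Upsilon}, transplanted to the nonnegative setting. What each approach buys: the paper's argument is shorter but leans on \cref{thm:Luz-eq-Hdual}, whose proof requires the Perron--Frobenius argument; your argument bypasses that equality entirely, needing only the easy feasibility observation behind \cref{thm:Hdual-leq-Luz}, at the cost of invoking the known characterization \cref{eq:theta'-bar-Hoffman} and the antiblocking duality \cref{eq:dual-of-theta'}. Your closing chain also cleanly forces equality throughout and yields attainment of the infimum, which the paper gets for free from the `\(\min\)' in \cref{eq:theta+-best-Luz}.
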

\begin{proof}
  Immediate from \cref{eq:theta+-best-Luz,thm:Luz-eq-Hdual}.
\end{proof}

And our final result is a counterpart to
\cref{prop:theta-body-description}:
\begin{proposition}
Let \(G = (V,E)\) be a graph.
Then
\begin{align}
  \label{eq:theta-plus-body-description}
  \THbody^+(G) &= \bigcap_{A \in \Acal_G^+} \Ucal_A,\\
  \label{eq:theta-prime-body-description}
  \THbody'(\overline{G}) &= \bigcup_{A \in \Acal_G^+} \Hcal_A.
\end{align}
\end{proposition}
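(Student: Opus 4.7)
The plan is to mirror the proof of \cref{prop:theta-body-description}, with \(\theta^+\) replacing \(\theta\) and the subfamily \(\Acal_G^+\) replacing \(\Acal_G\), using \cref{thm:theta+-best-Upsilon} in place of \cref{thm:theta-best-Upsilon} and relation~\cref{eq:abl-TH'} in place of \cref{eq:abl-TH}.

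First, I would prove \cref{eq:theta-prime-body-description}. Applying \cref{eq:abl-TH'} to \(\overline{G}\) in place of~\(G\) and using the involution \cref{eq:abl-abl} on convex corners yields \(\THbody'(\overline{G}) = \abl(\THbody^+(G))\). By \cref{item:gauge-construction-dual}, the positive definite monotone gauge \(\theta^+(G,\cdot)\) has unit convex corner \(\abl(\THbody^+(G)) = \THbody'(\overline{G})\), so for every \(w \in \Reals_+^V\),
\[
  w \in \THbody'(\overline{G})
  \iff
  \theta^+(G,w) \leq 1.
\]
Now \cref{thm:theta+-best-Upsilon} gives \(\theta^+(G,w) = \min_{A \in \Acal_G^+}\Hdual(A,w)\) (with the minimum attained), so \(\theta^+(G,w) \leq 1\) if and only if there exists \(A \in \Acal_G^+\) with \(\Hdual(A,w) \leq 1\), that~is, \(w \in \Hcal_A\) for some \(A \in \Acal_G^+\). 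This proves \cref{eq:theta-prime-body-description}.

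Next, I would obtain \cref{eq:theta-plus-body-description} by taking antiblockers of both sides of \cref{eq:theta-prime-body-description}. Using \(\THbody^+(G) = \abl(\THbody'(\overline{G}))\) (the reverse of what was just established, again by \cref{eq:abl-abl}), together with \cref{eq:abl-of-union,eq:abl-U,eq:abl-abl}, we get
\[
  \THbody^+(G)
  = \abl\paren[\bigg]{\bigcup_{A \in \Acal_G^+} \Hcal_A}
  = \bigcap_{A \in \Acal_G^+} \abl(\Hcal_A)
  = \bigcap_{A \in \Acal_G^+} \Ucal_A,
\]
as required.

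There is essentially no substantive obstacle: all the hard work has been done in establishing \cref{thm:theta+-best-Upsilon} and the antiblocker identity \cref{eq:abl-TH'}. The only point requiring minor care is checking that \(\theta^+(G,\cdot)\) has \(\THbody'(\overline{G})\) as its unit convex corner; this is an immediate consequence of the definition of~\(\theta^+\) as a support function together with \cref{item:gauge-construction-dual} and the antiblocker relation \cref{eq:abl-TH'}. Once that bookkeeping is in place, both identities drop out mechanically from \cref{thm:theta+-best-Upsilon} and antiblocker duality, exactly as in \cref{prop:theta-body-description}.
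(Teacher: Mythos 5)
Your proposal is correct and follows essentially the same route as the paper: establish \(\THbody'(\overline{G}) = \setst{w \in \Reals_+^V}{\theta^+(G,w) \leq 1}\) via \cref{eq:abl-TH',eq:abl-abl}, combine with \cref{thm:theta+-best-Upsilon} and \(\Hcal_A = \setst{w \in \Reals_+^V}{\Hdual(A,w) \leq 1}\) to get \cref{eq:theta-prime-body-description}, then take antiblockers using \cref{eq:abl-of-union,eq:abl-U,eq:abl-abl} to obtain \cref{eq:theta-plus-body-description}. No gaps.
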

\begin{proof}
  Let \(w \in \Reals_+^V\).
  \Cref{thm:theta+-best-Upsilon} implies that \(\theta^+(G,w) \leq 1\)
  holds if and only if there exists \(A \in \Acal_G^+\) such that
  \(\Hdual(A,w) \leq 1\).
  \Cref{eq:abl-TH',eq:abl-abl} imply
  \(\THbody'(\overline{G}) = \setst{w \in \Reals_+^V}{\theta^+(G,w)
    \le 1}\).
  Moreover, as
  \(\Hcal_A = \setst{w \in \Reals_+^V}{\Hdual(A,w) \leq 1}\), we have
  \cref{eq:theta-prime-body-description}.
  Then \cref{eq:abl-TH',eq:abl-of-union,eq:abl-U,eq:abl-abl} finish the
  proof:
  \[
    \THbody^+(G)
    = \abl(\THbody'(\overline{G}))
    = \abl\paren[\Bigg]{\bigcup_{A \in \Acal_G^+} \Hcal_A}
    = \bigcap_{A \in \Acal_G^+} \abl(\Hcal_A)
    = \bigcap_{A \in \Acal_G^+} \Ucal_A.
    \qedhere
  \]
\end{proof}

\bibliography{main}

\begin{thebibliography}{10}

\bibitem{AravkinBDFM18a}
A.~Y. Aravkin, J.~V. Burke, D.~Drusvyatskiy, M.~P. Friedlander, and K.~J.
  MacPhee.
\newblock Foundations of gauge and perspective duality.
\newblock {\em SIAM J. Optim.}, 28(3):2406--2434, 2018.

\bibitem{Ben-IsraelG03a}
Adi Ben-Israel and Thomas N.~E. Greville.
\newblock {\em Generalized inverses}, volume~15 of {\em CMS Books in
  Mathematics/Ouvrages de Math\'{e}matiques de la SMC}.
\newblock Springer-Verlag, New York, 2 edition, 2003.
\newblock Theory and applications.

\bibitem{Bilu06a}
Yonatan Bilu.
\newblock Tales of {H}offman: three extensions of {H}offman's bound on the
  graph chromatic number.
\newblock {\em J. Combin. Theory Ser. B}, 96(4):608--613, 2006.

\bibitem{BoydV04a}
Stephen Boyd and Lieven Vandenberghe.
\newblock {\em Convex optimization}.
\newblock Cambridge University Press, Cambridge, 2004.

\bibitem{CarliT17a}
Marcel~K. de~Carli~Silva and Levent Tun{\c{c}}el.
\newblock An axiomatic duality framework for the theta body and related convex
  corners.
\newblock {\em Mathematical Programming}, 162(1):283--323, 2017.

\bibitem{ElphickW17a}
Clive Elphick and Pawel Wocjan.
\newblock An inertial lower bound for the chromatic number of a graph.
\newblock {\em Electron. J. Combin.}, 24(1):Paper 1.58, 2017.

\bibitem{FriedlanderMP14a}
Michael~P. Friedlander, Ives Mac\^{e}do, and Ting~Kei Pong.
\newblock Gauge optimization and duality.
\newblock {\em SIAM J. Optim.}, 24(4):1999--2022, 2014.

\bibitem{Fulkerson71a}
D.~R. Fulkerson.
\newblock Blocking and anti-blocking pairs of polyhedra.
\newblock {\em Math. Programming}, 1:168--194, 1971.

\bibitem{Fulkerson72a}
D.~R. Fulkerson.
\newblock Anti-blocking polyhedra.
\newblock {\em J. Combinatorial Theory Ser. B}, 12:50--71, 1972.

\bibitem{GodsilR01a}
C.~Godsil and G.~Royle.
\newblock {\em Algebraic graph theory}, volume 207 of {\em Graduate Texts in
  Mathematics}.
\newblock Springer-Verlag, New York, 2001.

\bibitem{GodsilN08a}
C.~D. Godsil and M.~W. Newman.
\newblock Eigenvalue bounds for independent sets.
\newblock {\em J. Combin. Theory Ser. B}, 98(4):721--734, 2008.

\bibitem{GroetschelLS86a}
M.~Gr{\"o}tschel, L.~Lov{\'a}sz, and A.~Schrijver.
\newblock Relaxations of vertex packing.
\newblock {\em J. Combin. Theory Ser. B}, 40(3):330--343, 1986.

\bibitem{GroetschelLS93a}
Martin Grötschel, László Lovász, and Alexander Schrijver.
\newblock {\em Geometric algorithms and combinatorial optimization}, volume~2
  of {\em Algorithms and Combinatorics}.
\newblock Springer-Verlag, Berlin, second edition, 1993.

\bibitem{GvozdenovicL08a}
N.~Gvozdenovi{\'c} and M.~Laurent.
\newblock The operator {$\Psi$} for the chromatic number of a graph.
\newblock {\em SIAM J. Optim.}, 19(2):572--591, 2008.

\bibitem{Hoffman70a}
A.~J. Hoffman.
\newblock On eigenvalues and colorings of graphs.
\newblock In {\em Graph {T}heory and its {A}pplications ({P}roc. {A}dvanced
  {S}em., {M}ath. {R}esearch {C}enter, {U}niv. of {W}isconsin, {M}adison,
  {W}is., 1969)}, pages 79--91. Academic Press, New York, 1970.

\bibitem{HornJ90a}
R.~A. Horn and C.~R. Johnson.
\newblock {\em Matrix analysis}.
\newblock Cambridge University Press, Cambridge, 1990.
\newblock Corrected reprint of the 1985 original.

\bibitem{Knuth94a}
Donald~E. Knuth.
\newblock The sandwich theorem.
\newblock {\em Electron. J. Combin.}, 1:Article 1, approx.\ 48 pp.\
  (electronic), 1994.

\bibitem{Lang93a}
Serge Lang.
\newblock {\em Real and functional analysis}, volume 142 of {\em Graduate Texts
  in Mathematics}.
\newblock Springer-Verlag, New York, third edition, 1993.

\bibitem{Lovasz79a}
L.~Lovász.
\newblock On the {S}hannon capacity of a graph.
\newblock {\em IEEE Trans. Inform. Theory}, 25(1):1--7, 1979.

\bibitem{LuzS05a}
C.~J. Luz and A.~Schrijver.
\newblock A convex quadratic characterization of the {L}ov\'asz theta number.
\newblock {\em SIAM J. Discrete Math.}, 19(2):382--387 (electronic), 2005.

\bibitem{Luz95a}
Carlos~J. Luz.
\newblock An upper bound on the independence number of a graph computable in
  polynomial-time.
\newblock {\em Oper. Res. Lett.}, 18(3):139--145, 1995.

\bibitem{Luz16a}
Carlos~J. Luz.
\newblock A characterization of the weighted {L}ov\'{a}sz number based on
  convex quadratic programming.
\newblock {\em Optim. Lett.}, 10(1):19--31, 2016.

\bibitem{LuzC01a}
Carlos~J. Luz and Domingos~M. Cardoso.
\newblock A quadratic programming approach to the determination of an upper
  bound on the weighted stability number.
\newblock {\em European J. Oper. Res.}, 132(3):569--581, 2001.

\bibitem{McElieceRR78a}
R.~J. McEliece, E.~R. Rodemich, and H.~C. Rumsey, Jr.
\newblock The {L}ovász bound and some generalizations.
\newblock {\em J. Combin. Inform. System Sci.}, 3(3):134--152, 1978.

\bibitem{Nemirovski12a}
Arkadi Nemirovski.
\newblock Introduction to linear optimization, 2012.

\bibitem{Newman04a}
Michael~William Newman.
\newblock {\em Independent Sets and Eigenspaces}.
\newblock PhD thesis, 2004.

\bibitem{Rockafellar97a}
R.~T. Rockafellar.
\newblock {\em Convex analysis}.
\newblock Princeton Landmarks in Mathematics. Princeton University Press,
  Princeton, NJ, 1997.
\newblock Reprint of the 1970 original, Princeton Paperbacks.

\bibitem{Schrijver79a}
A.~Schrijver.
\newblock A comparison of the {D}elsarte and {L}ovász bounds.
\newblock {\em IEEE Trans. Inform. Theory}, 25(4):425--429, 1979.

\bibitem{Schrijver86a}
A.~Schrijver.
\newblock {\em Theory of linear and integer programming}.
\newblock Wiley-Interscience Series in Discrete Mathematics. John Wiley \& Sons
  Ltd., Chichester, 1986.
\newblock A Wiley-Interscience Publication.

\bibitem{Szegedy94a}
M.~Szegedy.
\newblock A note on the theta number of {L}ovász and the generalized
  {D}elsarte bound.
\newblock In {\em Proceedings of the 35th {A}nnual {IEEE} {S}ymposium on
  {F}oundations of {C}omputer {S}cience}, 1994.

\end{thebibliography}
\bibliographystyle{plain}

\end{document}